\documentclass[12pt]{article}
\usepackage{amsmath,amsthm,amssymb,amsfonts,mathrsfs}
\usepackage[hypertex, bookmarks, colorlinks=true, plainpages=false,
citecolor = blue, urlcolor = blue, filecolor = blue]{hyperref}
\newtheorem{theorem}{Theorem}[section]
\newtheorem{lemma}[theorem]{Lemma}
\newtheorem{proposition}[theorem]{Proposition}
\newtheorem{corollary}[theorem]{Corollary}

\newtheorem{remark}[theorem]{Remark}

\parskip=0mm
\oddsidemargin=0cm\evensidemargin=0cm\textwidth=16cm\textheight
=23cm\headheight=0cm\topskip=0cm\topmargin=0mm
\def\a{\bar {a}}\def\b{\bar {b}}\def\c{\bar {c}}
\def\x{\bar {x}}\def\y{\bar {y}}
\def\z{\bar {z}}

\def\Nn{\mathbb N}

\def\Rn{\mathbb R}

\def\0{\sf 0}

\begin{document}
%\title{}
%\author{Seyed-Mohammad Bagheri}
%\maketitle

\begin{center}
{\Large\sc Definability in affine continuous logic}
\bigskip

{\bf Seyed-Mohammad Bagheri}

\end{center}

\begin{center}
{\footnotesize {\footnotesize Department of Pure Mathematics,
Faculty of Mathematical Sciences,\\
Tarbiat Modares University, Tehran, Iran, P.O. Box 14115-134}\\
e-mail: bagheri@modares.ac.ir,\ smbagherii@gmail.com}
\end{center}

\begin{abstract}
I study definable sets in affine continuous logic. Let $T$ be an affine theory.
After giving some general results, it is proved that if $T$ has a first order model,
its extremal theory is a complete first order theory and first order definable sets are affinely definable.
In this case, the type spaces of $T$ are Bauer simplices and they coincide with the sets of Keisler measures of the extremal theory.
In contrast, if $T$ has a compact model, definable sets are exactly the end-sets of definable predicates.
As an example, it is proved in the theory of probability algebras that one dimensional definable sets are exactly the intervals $[a,b]$.
\end{abstract}

{\sc Keywords}: {\small definable set, extreme type, extremally saturated model}

{\small {\sc AMS subject classification:} 03C40, 03C66, 52A07}

\section{Introduction}
%Formal logics are usually characterized by their expressive powers.
Definability theorems are usually consequences of the compactness theorem,
a property which is lost in the expressive powers higher than first order.
There is a rich family of general definability theorems in first order logic
much of which has been extended to continuous logic (see \cite{BBHU}).
The objective of the present paper is to study similar definability concepts in the framework of affine
(or linear\footnote{This terminology has been used in \cite{bagheri3, Isomorphism}.}) continuous logic.
This is the fragment of continuous logic obtained by reducing logical connectives to addition
and scalar multiplication (the affine structure of the value space $\Rn$).
This fragment enjoys a different form of the compactness theorem so that nontrivial
finite or compact structures find proper elementary extensions in the new setting.
%In particular, compact structures have nontrivial affine theories with less number of definable sets than in the full framework.
%Also, algebraic closure is no longer meaningful (although, definable closure is still meaningful).
It is however surprising that for first order models affine elementary equivalence implies first order elementary equivalence.

%Although affine logic is a fragment, it is a logic in its own right.
%One reason this logic is interesting is that it fits with the ultramean construction (a generalization of the ultraproduct construction),
%so that an affine variant of Keisler-Shelah isomorphism theorem holds.
%Also, several model theoretic notions have appropriate affine forms \cite{bagheri3, Isomorphism}.
%In particular, instead of ordinary types, one deals with affine types which is a fundamentally distinct notion from the point of view of analysis.
%In full continuous logic, types correspond to characters (Banach algebra homomorphisms)
%while in the affine fragment they correspond to positive linear functionals.
%So, full types form compact sets while affine types form compact convex sets.
%This additional structure on type spaces provides new connections between model theory and functional analysis.
%In particular, notions from convexity theory intervene in model theory.
%In fact, the two kinds of types are intimately related.
%Roughly, the space of affine types is the convex closure of the space of full types and the space of
%full types is the extreme boundary of the space of affine types.

In this paper, we study definability in the framework of affine logic.
Some basic results were already obtained in \cite{bagheri3}.
Here, we continue it by rather putting focus on the type spaces.
Affine definability has a close relation with facial types, specially in the case where the theory has a compact model.
In this case, definable sets are exactly the end-sets of definable predicates and
principal types are exactly the exposed ones.
On the other hand, if the theory has a first order model, all first order definable sets
are affinely definable and all first order tuples have extreme types.
It is also interesting that Keisler measures of a first order model $M$ correspond to types
of the affine theory of $M$ and they are realized in non first order models of this theory.

In the first section of the paper, a brief review of the affine logic is given.
For technical reasons, uniform continuity is replaced with Lipschitz continuity.
This does not impose important restriction on continuous structures as
every uniformly continuous function is the uniform limit of Lipschitz functions.
We can later add symbols for arbitrary definable predicates (as uniform limits of formulas)
and treat as if they are ordinary formulas.
In the next sections, some general facts on types and definable predicates (resp. functions, sets) are proved.
In most results extremal saturation is needed. This is a weaker form of saturation encompassing compact models as examples.

\section{Preliminaries}
A {\em Lipschitz language} is a first order language $L$ with some additional data.
To each function symbol $F$ is assigned a Lipschitz constant $\lambda_F\geqslant0$
and to each relation symbol $R$ is assigned a Lipschitz constant $\lambda_R\geqslant0$.
Also, the equality symbol is replaced with metric symbol $d$.
An $L$-structure is a complete metric space $(M,d)$ equipped with:
for each constant symbol $c$ an element $c^M\in M$,
for each $n$-ary function symbol $F$ a $\lambda_F$-Lipschitz function
$F^M:M^n\rightarrow M$ and for each $n$-ary relation symbol $R$ a $\lambda_R$-Lipschitz
function $R^M:M^n\rightarrow[0,1]$. In particular, $d:M^2\rightarrow[0,1]$ is $1$-Lipschitz.
Here, $M^n$ is equipped with the metric $\sum_{i=1}^n d(x_i,y_i)$.\
\emph{Affine formulas} (formulas for short) are inductively defined as follows:
$$1, \ \ R(t_1,...,t_n), \ \ \ r\cdot\phi,\ \ \ \phi+\psi,\ \ \ \inf_x\phi,\ \ \ \sup_x\phi$$
where $R\in L$ is an $n$-ary relation symbol (or metric symbol $d$), $t_1,...,t_n$ are $L$-terms and $r\in\Rn$.
For each $\phi(\x)$ and $\a\in M$, $\phi^M(\a)\in\Rn$ is defined in the usual way.
One can then assign to each formula $\phi$ a Lipschitz constant
$\lambda_{\phi}$ and a bound $\bf{b}_{\phi}$ such that for every $M$
$$|\phi^M(\x)|\leqslant{\bf b}_\phi,\ \ \ \ \ |\phi^M(\x)-\phi^M(\y)|
\leqslant\lambda_{\phi} d(\x,\y)\ \ \ \ \ \ \ \forall\x\y\in M.$$

Affine continuous logic AL is based on the formulas defined above.
In full continuous logic CL one further allows $\phi\vee\psi$ and $\phi\wedge\psi$.
So, AL is a fragment of CL.

We now recall the ultramean construction shortly (see \cite{Isomorphism} for further detail).
Let $I$ be a nonempty index set.
An \emph{ultracharge} on $I$ is a finitely additive probability measure $\mu:P(I)\rightarrow[0,1]$.
Ultrafilters may be regarded as $\{0,1\}$-valued ultracharges.
Let $\mu$ be an ultracharge on $I$.
For each $i\in I$, let $(M_i,d_i)$ be an $L$-structure.
For $a,b\in \prod_i M_i$ define
$$d(a,b)=\int d_i(a_i,b_i) d\mu.$$
Then $d$ is a pseudometric and $d(a,b)=0$ is an equivalence relation.
The equivalence class of $(a_i)$ is denoted by $[a_i]$,
the quotient set by $M=\prod_{\mu} M_i$ and the induced metric on
$M$ again by $d$.
We define an $L$-structure on $M$ as follows.
For $c, F, R \in L$ (say unary for simplicity) set
$$c^M=[c^{M_i}]$$
$$F^M([a_i])=[F^{M_i}(a_i)]$$
$$R^M([a_i])=\int R^{M_i}(a_i)d\mu.$$

These integrals coincide with the corresponding ultralimits if $\mu$ is an ultrafilter.
In this case, one obtains the usual ultraproduct.
The structure $M$ defined above may not be complete. It can however be completed
by usual arguments and the completion satisfies the same conditions as $M$.
The following result is an affine variant of the ultraproarduct theorem.

\begin{theorem} \emph{(Ultramean theorem)}
For every affine formula $\phi(x_1,...x_n)$ and $[a^1_i],...,[a^n_i]$
$$\phi^M([a^1_i],...,[a^n_i])=\int\phi^{M_i}(a^1_i,...,a^n_i) d\mu.$$
\end{theorem}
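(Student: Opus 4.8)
The plan is to prove the Ultramean theorem by induction on the structure of the affine formula $\phi$. The base cases and the inductive steps mirror the classical \L o\'s theorem, but the crucial difference is that the value space is $\Rn$ with integration against the finitely additive ultracharge $\mu$ replacing the ultralimit. First I would establish the atomic case: for $\phi$ of the form $1$, the claim is the trivial identity $1=\int 1\, d\mu$; for $\phi=R(t_1,\dots,t_n)$ (including the metric symbol $d$), I would first verify by a sub-induction on terms that each term $t$ satisfies $t^M([a_i])=[t^{M_i}(a_i)]$, which follows directly from the definitions of $c^M$, $F^M$ on the quotient, and then apply the definition $R^M([a_i])=\int R^{M_i}(a_i)\,d\mu$ to conclude. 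These atomic cases are where the construction of $M$ was rigged to make the statement hold.

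For the inductive steps on the connectives $r\cdot\phi$ and $\phi+\psi$, I would invoke linearity of the integral: $\int r\,\phi^{M_i}\,d\mu = r\int\phi^{M_i}\,d\mu$ and $\int(\phi^{M_i}+\psi^{M_i})\,d\mu=\int\phi^{M_i}\,d\mu+\int\psi^{M_i}\,d\mu$. This is precisely why the affine fragment behaves well under the ultramean: the permitted connectives are exactly the affine operations that commute with integration, whereas $\vee$ and $\wedge$ (available in full CL) would not. The boundedness and Lipschitz estimates recorded before the theorem guarantee that all the integrands are bounded measurable functions of $i$, so the integrals are well defined against the finitely additive $\mu$.

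The main obstacle, and the step deserving the most care, is the quantifier case $\inf_x\phi$ (the case $\sup_x\phi$ being symmetric). Here one cannot simply pull $\inf$ through the integral, since in general $\inf_b\int\phi^{M_i}(a_i,b_i)\,d\mu \neq \int\inf_{b_i}\phi^{M_i}(a_i,b_i)\,d\mu$. The inequality $\phi^M([a_i],[b_i])=\int\phi^{M_i}(a_i,b_i)\,d\mu \geqslant \int\inf_{c}\phi^{M_i}(a_i,c)\,d\mu$ for every choice of $[b_i]$ gives $\inf_{[b_i]}\phi^M\geqslant\int\inf_c\phi^{M_i}\,d\mu$ readily. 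The reverse inequality is the delicate part: for each $i$ and each $\varepsilon>0$ one selects (using approximate infima) a witness $b_i^\varepsilon\in M_i$ with $\phi^{M_i}(a_i,b_i^\varepsilon)\leqslant\inf_c\phi^{M_i}(a_i,c)+\varepsilon$, assembles the tuple $[b_i^\varepsilon]\in M$, and integrates to get $\phi^M([a_i],[b_i^\varepsilon])\leqslant\int\inf_c\phi^{M_i}\,d\mu+\varepsilon$; letting $\varepsilon\to 0$ then yields $\inf_{[b_i]}\phi^M\leqslant\int\inf_c\phi^{M_i}\,d\mu$.

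Finally I would address the completion: since $M$ as constructed may not be complete, I would note that $\phi^M$ extends uniformly continuously to the completion by the Lipschitz bound $\lambda_\phi$, and that elements of the completion are limits of elements of $M$, so the identity passes to the completion by continuity. The principal points to get right are the measurability and boundedness needed to make every integral legitimate under a merely finitely additive charge, and the approximate-witness argument in the $\inf$ step; everything else is a routine verification that the affine connectives commute with $\int\,d\mu$.
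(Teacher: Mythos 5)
Your proof is correct and is essentially the standard \L o\'s-style induction that the paper relies on (the paper itself only states the theorem and defers the proof to the cited references): atomic cases by construction, linearity of the integral for the affine connectives, and approximate witnesses chosen fibrewise for the quantifier step, with monotonicity of the charge integral supplying both inequalities. Your handling of the delicate $\inf_x$ direction and the passage to the completion is exactly what is needed, so there is nothing to add.
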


If $M_i=N$ for all $i$, one obtains the \emph{powermean} $N^{\mu}$.
The diagonal map $a\mapsto[a]$ is then elementary (preserves all formulas).

Expressions of the form $\phi\leqslant\psi$ are called \emph{conditions}.
$\phi=\psi$ abbreviates the set $\{\phi\leqslant\psi,\psi\leqslant\phi\}$.
A theory is a set of closed (i.e. without free variables) conditions.
A theory $T$ is \emph{affinely satisfiable} if for every
$\phi_1\leqslant\psi_1,\ ...,\ \phi_n\leqslant\psi_n$ in $T$ and $r_1,...,r_n\geqslant0$,
the condition $\sum_ir_i\phi_i\leqslant\sum_ir_i\psi_i$ is satisfiable.
The set of all such combinations is called the \emph{affine closure} of $T$.
For sentences $\phi,\psi$,\ \ $T\vDash\phi\leqslant\psi$ is defined as usual.
$T$ is complete if for each sentence $\phi$ there is a unique $r$ such that
$T\vDash\phi=r$. A consequence of the ultramean theorem is the affine compactness theorem.

\begin{theorem} \label{compactness} (see \cite{bagheri3, Isomorphism})
Every affinely satisfiable theory is satisfiable.
\end{theorem}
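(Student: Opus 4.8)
The plan is to run a Hahn--Banach separation argument in which the ultramean theorem supplies the crucial compactness and convexity. First I would rewrite each condition $\phi\leqslant\psi$ of $T$ as $\chi\leqslant0$, where $\chi=\phi+(-1)\cdot\psi$ is again an affine sentence; say the resulting conditions are $\{\chi_j\leqslant0:j\in J\}$. Under this rewriting, affine satisfiability says precisely that for every finite $F\subseteq J$ and all $r_j\geqslant0$ ($j\in F$) some structure $M$ satisfies $\sum_{j\in F}r_j\chi_j^M\leqslant0$. The goal is to produce a single structure $M$ with $\chi_j^M\leqslant0$ for all $j$ at once.

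The key object is the value set $D=\{(\chi_j^M)_{j\in J}:M\text{ an }L\text{-structure}\}\subseteq\prod_{j\in J}[-\mathbf{b}_{\chi_j},\mathbf{b}_{\chi_j}]$, the ambient box being compact by Tychonoff. I would use the ultramean theorem twice to control $D$. For convexity: given structures $M_1,\dots,M_k$ and weights $r_1,\dots,r_k\geqslant0$ with $\sum_i r_i=1$, form the ultramean $N=\prod_\mu M_i$ over the finite index set $\{1,\dots,k\}$ with the ultracharge $\mu(\{i\})=r_i$; the ultramean theorem gives $\chi_j^{N}=\sum_i r_i\chi_j^{M_i}$, so the corresponding convex combination of points of $D$ again lies in $D$. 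For closedness: if a net coming from structures $M_\alpha$ converges to a point $p$, choose an ultrafilter (a two-valued ultracharge) refining the tail filter of the net; the ultramean over it realizes the coordinatewise ultralimits, which agree with $p$ because the net converges, so $p\in D$. Thus $D$ is a closed, convex, compact subset of the box.

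It then suffices to show that $D$ meets the negative cone $N=\{q:q_j\leqslant0\ \forall j\}$, since a point of $D\cap N$ is exactly $(\chi_j^M)_j$ for a model $M$ of $T$. Suppose not. Then $D$ and the closed convex cone $N$ are disjoint, and as $D$ is compact convex, Hahn--Banach strict separation yields a continuous linear functional $\ell$ on the product with $\sup_D\ell<\inf_N\ell$. Continuity forces $\ell$ to depend on finitely many coordinates, $\ell(q)=\sum_j c_j q_j$; boundedness below on the cone $N$ forces $\ell\geqslant0$ on $N$ (so $\inf_N\ell=0$) and hence $c_j\leqslant0$ for each $j$. Writing $r_j=-c_j\geqslant0$ (finitely many nonzero) and unwinding $\sup_D\ell<0$ gives a constant $\varepsilon>0$ with $\sum_j r_j\chi_j^M\geqslant\varepsilon$ for every structure $M$; that is, no structure satisfies $\sum_j r_j\chi_j\leqslant0$. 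But this is a finite nonnegative combination of the conditions of $T$, contradicting affine satisfiability. Therefore $D\cap N\neq\emptyset$ and $T$ has a model.

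The main obstacle is the middle step: establishing that $D$ is convex and closed, equivalently that averages and limits of value-vectors are themselves realized by genuine structures. This is exactly where the ultramean theorem (with honest, not necessarily two-valued, ultracharges) does the work, and it is what replaces the ``pick a model of each finite subset'' move of the classical ultraproduct proof --- a move unavailable here, since affine satisfiability only controls weighted combinations rather than individual conditions. Once the convex compactness of $D$ is in hand, the remainder is routine functional analysis (Tychonoff together with Hahn--Banach separation).
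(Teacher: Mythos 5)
Your argument is correct. One remark first: the paper itself gives no proof of Theorem \ref{compactness}; it only records that the result is ``a consequence of the ultramean theorem'' and defers to \cite{bagheri3, Isomorphism}, where the argument runs in the opposite direction: one chooses, for each condition in the affine closure, a witnessing structure $M_i$, uses a positive Hahn--Banach (Kantorovich-type) extension to produce an ultracharge $\mu$ on the index set of witnesses, and then invokes the ultramean theorem to see that $\prod_\mu M_i\vDash T$. Your proof is essentially the geometric dual of that: instead of building a charge on an index set, you form the value set $D=\{(\chi_j^M)_j\}$, prove it is compact convex (convexity via ultrameans over finite ultracharges, closedness via ultralimits along an ultrafilter refining the tail filter), and separate it from the negative orthant. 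The two routes use exactly the same two ingredients --- the ultramean theorem and Hahn--Banach --- and both ultimately exhibit the model as an ultramean of witnesses; yours packages the functional-analytic step as separation of convex sets in $\Rn^J$ rather than as extension of a positive functional on $\ell^\infty$ of the witness set. All the delicate points check out: the dual of $\Rn^J$ with the product topology is finitely supported, so the separating functional yields a finite nonnegative combination; the cone structure of the negative orthant forces the coefficients to have the right sign; and $D\cap N\neq\emptyset$ does produce an honest structure because you established closedness of $D$ itself, not merely of its closure. A bonus of your formulation, worth making explicit, is that the contradiction you reach is $\sum_j r_j\chi_j^M\geqslant\varepsilon>0$ uniformly over all structures, so you have in fact proved the sharper statement recorded immediately after the theorem in the paper: approximate satisfiability of every condition in the affine closure already suffices.
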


In applications, one may use a somewhat weaker condition.
It is sufficient to prove that every condition in the affine closure of $T$ is approximately satisfiable.
Affine forms of elementary embedding $\preccurlyeq$ and elementary equivalence
$\equiv$ are defined in the usual way.
For example, $M\equiv N$ if $\sigma^M=\sigma^N$ for every sentence $\sigma$.
One proves by affine compactness that every model of cardinality at least two
has arbitrarily large elementary extensions.
This can be also proved by taking powermeans over suitable probability measures.
So, AL is a proper fragment of CL, i.e. has a strictly weaker expressive power.
However, most basic CL (or first order) theorems have appropriate counterparts in AL.
In particular, a Keisler-Shelah type isomorphism theorem holds in AL.
More details can be found in \cite{bagheri3, Isomorphism}.

\section{Types} \label{section2}
Let $T$ be a complete theory in $L$.
The set of $T$-equivalence classes of formulas with variables $\x$ forms a
normed vector space with $$\|\phi\|=\sup_{\a\in M}|\phi^M(\a)|$$
where $M$ is any model of $T$.
This space is denoted by $\mathbb D_n(T)$ or even $\mathbb D_n$.
We usually identify $\phi$ with its $T$-equivalence class or its interpretation $\phi^M$.
So, $\mathbb D_n\subseteq{\bf C}_b(M^n)$ is a partially ordered normed space.

A \emph{partial type} is a set of conditions $\phi(\x)\leqslant\psi(\x)$
which is satisfiable in some model of $T$.
Maximal partial types over $\x$, where $|\x|=n$, are called $n$-types.
So, if $p(\x)$ is a type, for each $\phi(\x)$ there is a unique $r$ such that $\phi=r\in p$.
This $r$ is denoted by $p(\phi)$. Then $\phi\mapsto p(\phi)$ defines a
positive linear functional on $\mathbb D_n$ with $p(1)=1$ (hence $\|p\|=1$).
Conversely, by affine compactness, every positive linear functional
$p:\mathbb D_n\rightarrow\Rn$ with $p(1)=1$ defines an $n$-type.
We usually identify each type with the corresponding linear functional.
For $\a\in M$, the type $\phi\mapsto\phi^M(\a)$ is denoted by $tp(\a)$.
A type $p$ is realized in $M$ if $p=tp(\a)$ for some $\a\in M$.

The set of $n$-types is denoted by $S_n(T)$.
The notion of a type over a set $A\subseteq M$ is defined in the usual way.
$S_n(A)$ denotes the set of $n$-types over $A$.
A model $M$ is $\kappa$-\emph{saturated} if for each $A\subseteq M$ with $|A|<\kappa$,
every $p\in S_n(A)$ is realized in $M$.
It is \emph{strongly $\kappa$-homogeneous} if for each tuples $\a,\b$ of length
$\lambda<\kappa$ with $(M,\a)\equiv (M,\b)$ there is an automorphism $f$ such that $f(\a)=\b$.
Every model has $\kappa$-saturated elementary extensions for every $\kappa\geqslant\aleph_0$.
This is a consequence of the following lemma and AL variant of the elementary chains theorem.

The elementary diagram of a model $M$ is defined in the usual way, i.e. $$ediag(M)=\{0\leqslant\phi(\a):\ 0\leqslant\phi^M(\a)\}.$$
This is a theory in the language $L_M$ in which there is a constant symbol for every $a\in M$.
It is then proved (assuming $M\subseteq N$) that $M\preccurlyeq N$ if and only if $N\vDash \mbox{ediag}(M)$.

\begin{lemma}
If $A\subseteq M$, every $p\in S_n(A)$ is realized in an elementary extension of $M$.
\end{lemma}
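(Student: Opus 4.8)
The plan is to realize $p$ inside a model of the elementary diagram of $M$. Introduce a tuple $\c=(c_1,\dots,c_n)$ of new constant symbols and consider, in the language $L_M\cup\{\c\}$, the theory
$$\Sigma = ediag(M)\,\cup\,p(\c),$$
where $p(\c)$ denotes the set of conditions obtained from $p$ by substituting $\c$ for the free variables $\x$. If $\Sigma$ has a model $N$, then the interpretations of the constants of $L_M$ embed $M$ into $N$, and since $N\vDash ediag(M)$ the criterion recalled above gives $M\preccurlyeq N$; moreover $\c^{\,N}$ realizes $p$ in $N$. So it suffices to prove that $\Sigma$ is satisfiable, and by Theorem~\ref{compactness} together with the remark that approximate satisfiability is enough, I would verify that every condition in the affine closure of $\Sigma$ is approximately satisfiable, in fact in $M$ itself.

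A typical condition in the affine closure of $\Sigma$ is a nonnegative combination of finitely many conditions of $\Sigma$. Since the new constants $\c$ occur only in the conditions coming from $p(\c)$, such a combination splits: the members of $ediag(M)$ contribute terms $0\leqslant\psi_j(\a^j)$ with $\psi_j^M(\a^j)\geqslant0$, while the members of $p(\c)$ contribute an affine combination $\Phi\leqslant\Psi$ lying in the affine closure of $p$. Interpreting every parameter from $M$ by itself, the combined inequality reads $\sum_k s_k\chi_k \leqslant \sum_j r_j\psi_j(\a^j)+\sum_k s_k\theta_k$; the first sum on the right is $\geqslant0$, so the $ediag(M)$ part only helps, and it remains to choose an interpretation of $\c$ in $M$ approximately satisfying the $p$-part $\Phi(\x)\leqslant\Psi(\x)$.

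This last point is where the definition of $S_n(A)$ does the work. Because $p$ is a type over $A\subseteq M$, it is consistent with the complete $L_A$-theory of $M$; hence for each $\Phi(\x)\leqslant\Psi(\x)$ in the affine closure of $p$ the $L_A$-sentence $\inf_{\x}\big(\Phi(\x)-\Psi(\x)\big)\leqslant0$ must hold in $M$, for otherwise completeness of that theory would force $\inf_{\x}(\Phi-\Psi)=\delta>0$, contradicting $\Phi\leqslant\Psi$. Consequently, for every $\varepsilon>0$ there is $\c^{\,M}\in M^n$ with $\Phi^M(\c^{\,M})-\Psi^M(\c^{\,M})<\varepsilon$, which is exactly approximate satisfiability of the whole combination. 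Applying Theorem~\ref{compactness} then yields a model $N$ of $\Sigma$, and hence the desired elementary extension $N\succcurlyeq M$ realizing $p$.

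The main obstacle is the splitting step and the accompanying sign bookkeeping: one must ensure that the parameters from $A$ are interpreted coherently and that the $ediag(M)$ contributions never push the combined inequality in the wrong direction. Once it is granted that a type over $A$ is finitely approximately satisfiable in $M$ — which is built into the notion of $S_n(A)$ via consistency with the complete $A$-theory of $M$ — the remainder is a routine application of affine compactness and the elementary-diagram criterion.
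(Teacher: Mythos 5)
Your proof is correct and reaches the same reduction as the paper --- apply affine compactness to $\Sigma = ediag(M)\cup p$ and split an affine combination of conditions into its diagram part and its type part --- but the key step is carried out in the dual direction. The paper works in a model $N\vDash p(\x)\cup Th(M,a)_{a\in A}$ where the type is realized exactly, and disposes of the diagram part by passing from $0\leqslant\phi(\a,\b)$ (with $\b\in M\setminus A$) to the $L_A$-sentence $0\leqslant\sup_{\y}\phi(\a,\y)$, which transfers from $M$ to $N$ and lets the constants for $\b$ be interpreted by approximate witnesses of the supremum. You work in $M$ itself, where the diagram is realized exactly, and dispose of the type part by passing from $\Phi\leqslant\Psi$ to the $L_A$-sentence $\inf_{\x}(\Phi-\Psi)\leqslant0$, which transfers from a model realizing $p$ back to $M$ by completeness of $Th(M,a)_{a\in A}$ and yields approximate witnesses for $\c$. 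Both arguments rest on the same transfer of $L_A$-sentences between $M$ and models of $Th(M,a)_{a\in A}$, and both use that the surviving contribution of the other part is nonnegative (your explicit sign bookkeeping plays the role of the paper's remark that $ediag(M)$ is affinely closed, which collapses the diagram contributions to a single condition). Your version has the mild advantage of invoking only approximate satisfiability inside $M$, which is exactly the weakened form of Theorem~\ref{compactness} the paper records; the paper's version is slightly shorter because the type part needs no $\varepsilon$ at all.
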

\begin{proof}
We have to show that $\Sigma=ediag(M)\cup p(\x)$ is affinely satisfiable.
Note that $ediag(M)$ is affinely closed (i.e. it coincides with its affine closure).
Let $0\leqslant\phi^M(\a,\b)$ where $\a\in A$ and $\b\in M-A$. Let $N$ be a model of $p(\x)\cup Th(M,a)_{a\in A}$.
Then, $N$ is a model of $0\leqslant\sup_{\y}\phi(\a,\y)$ as well as $p(\x)$.
\end{proof}

Similar arguments show that every model has a strongly $\kappa$-homogeneous elementary extension.
The type space $S_n(T)$ may be equipped with various topologies. The \emph{logic topology}
is generated by the sets of the form $$[r<\phi(\x)]=\{p\in S_n(T):\ \ \ r<p(\phi)\}.$$
Equivalently, writing $\hat\phi(p)=p(\phi)$, the logic topology is
the coarsest topology in which every $\hat\phi:S_n(T)\rightarrow\Rn$ is continuous.
A basis for this topology is the family of sets $\bigcap_{i=1}^n[0<\phi_i(\x)]$.
The logic topology is compact by the Banach-Alaoglu theorem. $S_n(T)$ is also convex.

A subset $A$ of a vector space $V$ is said to be convex if for each
$x,y\in A$ and $0\leqslant\gamma\leqslant1$, one has that $\gamma x+(1-\gamma)y\in A$.
Clearly, if $p,q$ are $n$-types then so is $\gamma p+(1-\gamma)q$.
So, $S_n(T)$ is a weak* compact convex subset of $\mathbb{D}_n^*$.
It is also worth noting that the weak* dual of $\mathbb D_n^*$ coincides with $\mathbb D_n$ itself (see \cite{Conway} p.125).
Then, an application of the Hahn-Banach separation theorem (\cite{Conway} p.111) shows that

\begin{proposition} \label{closed convex}
Closed convex subsets of $S_n(T)$ are exactly the sets defined by partial types, i.e.
sets of the form $\{p\in S_n(T):\ \ \Gamma\subseteq p\}$ where $\Gamma$ is a set of conditions.
\end{proposition}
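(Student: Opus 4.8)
The plan is to prove the two inclusions separately. First I would show that any set of the form $\{p \in S_n(T) : \Gamma \subseteq p\}$ is closed and convex. Closedness is immediate from the logic topology: each condition $\phi \leqslant \psi$ in $\Gamma$ defines the set $\{p : p(\phi) \leqslant p(\psi)\} = \{p : p(\psi - \phi) \geqslant 0\}$, which is the preimage of $[0,\infty)$ under the continuous map $\widehat{\psi - \phi}$, hence closed; the full set is an intersection of such closed sets. Convexity follows because each defining inequality is linear in $p$: if $p(\psi-\phi) \geqslant 0$ and $q(\psi-\phi) \geqslant 0$, then for $0 \leqslant \gamma \leqslant 1$ we have $(\gamma p + (1-\gamma)q)(\psi-\phi) = \gamma\, p(\psi-\phi) + (1-\gamma)\,q(\psi-\phi) \geqslant 0$. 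This direction is routine and uses only the definitions already in place.

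The substantive direction is the converse: every weak* closed convex $C \subseteq S_n(T)$ is defined by some set of conditions. Here the natural strategy is to let $\Gamma$ be the set of \emph{all} conditions satisfied by every type in $C$, namely $\Gamma = \{\phi \leqslant \psi : p(\phi) \leqslant p(\psi) \text{ for all } p \in C\}$, and to show that the set it defines is exactly $C$. One inclusion is trivial from the definition of $\Gamma$. For the reverse, suppose $q \in S_n(T) \setminus C$; I want to produce a single condition in $\Gamma$ that $q$ violates, which will show that no point outside $C$ satisfies all of $\Gamma$.

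The main obstacle, and the crux of the argument, is exactly this separation step, and it is where the Hahn--Banach theorem cited in the excerpt enters. Since $C$ is weak* closed and convex in $\mathbb{D}_n^*$ and $\{q\}$ is a disjoint weak* compact convex set, the Hahn--Banach separation theorem (\cite{Conway} p.111) yields a weak* continuous linear functional strictly separating them. The key point, flagged in the excerpt, is that the weak* dual of $\mathbb{D}_n^*$ is $\mathbb{D}_n$ itself (\cite{Conway} p.125), so this separating functional is represented by an actual formula $\phi \in \mathbb{D}_n$: there is a real $r$ with $p(\phi) \leqslant r$ for all $p \in C$ but $q(\phi) > r$. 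The constant $r$ can be absorbed by working with $r \cdot 1 - \phi$, noting $p(1) = 1$ for every type, so the condition $\phi \leqslant r$ (equivalently $0 \leqslant r \cdot 1 - \phi$, an honest affine condition since $\mathbb{D}_n$ contains the constants and is closed under scalar multiplication and addition) belongs to $\Gamma$ yet fails at $q$.

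Thus $q$ does not satisfy $\Gamma$, proving that the set defined by $\Gamma$ is contained in $C$; combined with the trivial inclusion this gives equality, completing the proof. I expect the only delicate bookkeeping to be confirming that the separating functional really lands in $\mathbb{D}_n$ (rather than its closure or a larger bidual) and that the resulting inequality can be written as a bona fide affine condition respecting $p(1)=1$; both are handled by the two \cite{Conway} facts already invoked in the text.
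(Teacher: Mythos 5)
Your proposal is correct and follows exactly the route the paper intends: the easy direction from linearity and logic-continuity of $\hat\phi$, and the converse via Hahn--Banach separation in the weak* topology together with the fact that the weak* dual of $\mathbb D_n^*$ is $\mathbb D_n$, so the separating functional is (evaluation at) a genuine formula and the separation becomes the condition $\phi\leqslant r\cdot 1$. The paper gives no more detail than this, so your write-up is essentially its proof spelled out.
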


Let $V$ be a tvs and $K\subseteq V$ be compact convex.
A closed $F\subseteq K$ is called a \emph{face} if for each $p,q\in K$ and $0<\gamma<1$,\ \
$\gamma p+(1-\gamma)q\in F$ implies that $p,q\in F$. One point faces are called \emph{extreme points}.
By the Krein-Milmann theorem, $K$ is equal to the closure of the convex hull of the set of extreme points.

\begin{theorem} \emph{(\cite{Simon} Th. 8.3)}
Let $K\subseteq V$ be compact convex and $f\in V^*$. If $f|_K$ is not constant and
$r=\sup_{p\in K}f(p)$, then $F=f^{-1}(r)$ is a proper face of $K$.
\end{theorem}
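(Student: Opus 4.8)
The plan is to verify in turn the four requirements hidden in the conclusion: that $F=f^{-1}(r)$ is nonempty, closed, a proper subset of $K$, and closed under the splitting of convex combinations. Since $f\in V^*$ is by definition a continuous linear functional and $K$ is compact, $f|_K$ attains its supremum $r$, so $F\neq\emptyset$. Writing $F=K\cap f^{-1}(\{r\})$ exhibits it as the intersection of $K$ with the preimage of a point under the continuous map $f$, hence $F$ is closed; I note that closedness must be checked here because the paper's definition of a face already builds it in.

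For properness I would invoke the hypothesis that $f|_K$ is not constant. As $r$ is the maximal value of $f$ on $K$, non-constancy forces some $p_0\in K$ with $f(p_0)<r$, so that $p_0\notin F$ and $F\neq K$. This is the sole use of the non-constancy assumption.

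The one genuinely structural step is the face property itself. Suppose $p,q\in K$ and $0<\gamma<1$ with $\gamma p+(1-\gamma)q\in F$; linearity of $f$ then gives $\gamma f(p)+(1-\gamma)f(q)=r$. Both $f(p)\leqslant r$ and $f(q)\leqslant r$ because $r$ is the maximum, and if either inequality were strict the convex combination with strictly positive coefficients would fall below $r$, contradicting the equality. Hence $f(p)=f(q)=r$, so $p,q\in F$, which is exactly what is required. No step presents a real obstacle; the only thing to watch is that the maximality of $r$ is used twice, once to bound each of $f(p)$ and $f(q)$ from above, and once to extract properness from non-constancy.
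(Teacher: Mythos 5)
Your proof is correct and complete: attainment of the supremum by compactness and continuity gives $F\neq\emptyset$, closedness and properness are handled exactly as needed for the paper's definition of a (proper) face, and the linearity argument for the splitting property is the standard one. The paper offers no proof of this statement --- it is quoted from Simon's book (Th.\ 8.3) --- so there is nothing to compare against; your argument is the expected textbook one and fills the gap correctly.
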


Faces of the above form are called \emph{exposed faces}.
Also, $F$ is said to be exposed by $f$. A point $p$ is exposed if $\{p\}$ is so.

Let $\Gamma(\x)$ be a set of conditions satisfiable with $T$ (a partial type) where $|\x|=n$.
$\Gamma$ is called a \emph{facial type} if the set $$\{p\in S_n(T):\ \Gamma\subseteq p\}$$ is a face of $S_n(T)$.
The following remark is a consequence of general facts for compact convex sets (see \cite{Simon}).

\begin{remark} \label{facial}
\emph{If $\Gamma_i(\x)$ is facial for each $i\in I$, then so is $\bigcup_{i\in I}\Gamma_i$ (if satisfiable).
If $\Gamma(\x)$ is facial and $\Gamma(\x)\vDash\theta(\x)\leqslant0$,
then $\Gamma(\x)\cup\{0\leqslant\theta(\x)\}$ is facial (if satisfiable).}
\end{remark}

A structure $M$ \emph{extremally $\kappa$-saturated} if for every $A\subseteq M$
with $|A|<\kappa$, every extreme type in $S_n(A)$ is realized in $M$.
This notion is weaker than being $\kappa$-saturated since only extreme types are intended.
Since every face contains an extreme type, an extremally $\kappa$-saturated model
realizes every facial type with less that $\kappa$ parameters.

%\begin{proposition}\label{sat1} (see \cite{Isomorphism} or \cite{extreme})
%Let $\mathcal F$ be a countably incomplete ultrafilter on $I$ and for each $i\in I$, $M_i$ be an $L$-structure where $L$ is countable.
%Then, $M=\prod_{\mathcal F}M_i$ is extremally $\aleph_1$-saturated.
%\end{proposition}

Let $\mathcal F$ be an ultrafilter on a nonempty set $I$ and $\kappa,\lambda$ be infinite cardinals with $\lambda<\kappa$.
A function $f:S_\omega(\lambda)\rightarrow\mathcal F$ is monotonic if $f(\tau)\supseteq f(\eta)$
whenever $\tau\subseteq\eta$. It is additive if $f(\tau\cup\eta)= f(\tau)\cap f(\eta)$.\ \
$\mathcal F$ is $\kappa$-good if for every $\lambda<\kappa$ and every monotonic $f:S_\omega(\lambda)\rightarrow\mathcal F$
there exists an additive $g$ such that $g(\tau)\subseteq f(\tau)$ whenever $\tau\in S_\omega(\lambda)$.

\begin{proposition} \label{sat1}
Let $\kappa$ be an infinite cardinal and $\mathcal F$ be a countably incomplete
$\kappa$-good ultrafilter on a set $I$. Let $|L|+\aleph_0<\kappa$ and for each $i\in I$,
$M_i$ be an $L$-structure. Then, $M=\prod_{\mathcal F}M_i$ is extremally $\kappa$-saturated.
\end{proposition}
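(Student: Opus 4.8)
The plan is to realise a prescribed extreme type $p\in S_n(A)$, with $|A|<\kappa$, by a coherent $\mathcal F$-choice of representatives $\c=[c_i]$. Throughout I use that for an ultrafilter the ultramean is the ordinary continuous ultraproduct, so that the value in $M$ of any formula built from the full continuous connectives (in particular $\max$ and $\min$) is the $\mathcal F$-ultralimit of its values in the factors; thus {\L}o\'s's theorem is available for such formulas. Writing $\a$ for an enumeration of $A$, the whole argument reduces to a single \emph{joint approximate realisability} statement $(\ast)$: for every finite $\Delta\subseteq\Db_n$ and every $\epsilon>0$, the value in $M$ of $\inf_{\x}\max_{\phi\in\Delta}|\phi(\x,\a)-p(\phi)|$ is $\leqslant\epsilon$. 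Indeed, $(\ast)$ says precisely that, read as a continuous type, $p$ is consistent with the continuous elementary diagram of $M$ over $A$.

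Proving $(\ast)$ is the crux and the only place extremality is used; I expect it to be the main obstacle. It genuinely fails for a typical non-extreme $p$: by the realisation lemma above, $p$ is realised in some affine elementary extension $M'$ with $M\preccurlyeq M'$, but affine elementarity does not preserve the continuous formula $\inf_{\x}\max_{\phi}|\phi-p(\phi)|$, so its defect need not descend to $M$. To treat the extreme case I would first show that $S_n(A)$ is the closed convex hull of the set $R$ of restrictions to $\Db_n$ of complete continuous types over $A$. For this I realise $p$ in $M'$ and invoke the ultramean representation of affine models to write $M'$ as an ultramean $\prod_\mu N_j$ of continuous models $N_j\equiv M$; the realiser then exhibits $p$ as the $\mu$-average of the affine restrictions of the continuous types $tp(\d_j/\cdot)$, whence $p\in\overline{\mathrm{conv}}\,R$. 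Milman's converse to the Krein--Milman theorem (see \cite{Simon}) now forces the extreme point $p$ into $R$, so $p=q|_{\Db_n}$ for a complete continuous type $q$ consistent with the continuous diagram of $M$ over $A$. Realising $q$ in a continuous elementary extension of $M$ makes $\inf_{\x}\max_{\phi}|\phi-p(\phi)|$ vanish there, and, being the value of a continuous formula, it already vanishes in $M$; this is $(\ast)$.

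With $(\ast)$ in hand I would run Keisler's good-ultrafilter construction. Fix, by countable incompleteness, $I=I_0\supseteq I_1\supseteq\cdots$ with each $I_n\in\mathcal F$ and $\bigcap_nI_n=\varnothing$. Since $|A|+|L|+\aleph_0<\kappa$, index the relevant conditions as pairs $(\xi,k)$, where $\xi<\lambda<\kappa$ enumerates $\Db_n$ over $A$ and $k\in\Nn$ records the tolerance $|\phi_\xi-p(\phi_\xi)|\leqslant 1/k$. For finite $\tau$ set $f(\tau)=I_{|\tau|}\cap\{i: M_i \text{ has an element meeting every member of } \tau\}$, taking the finest tolerance named in $\tau$. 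By $(\ast)$ and {\L}o\'s each $f(\tau)\in\mathcal F$, and $f$ is monotonic; $\kappa$-goodness yields an additive $g$ with $g(\tau)\subseteq f(\tau)$. For each $i$ the set $s(i)=\{(\xi,k): i\in g(\{(\xi,k)\})\}$ is finite, since any $N$ of its members $(\xi_1,k_1),\dots,(\xi_N,k_N)$ satisfy $i\in g(\{(\xi_r,k_r):r\leqslant N\})\subseteq I_N$ and $\bigcap_nI_n=\varnothing$; by additivity $i\in g(s(i))\subseteq f(s(i))$, so I may choose $c_i\in M_i$ meeting every condition of $s(i)$ at its tolerance. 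Finally, for fixed $\xi$ and any $\epsilon>0$, picking $k>1/\epsilon$ gives $(\xi,k)\in s(i)$ for all $i\in g(\{(\xi,k)\})\in\mathcal F$, whence $|\phi_\xi^{M_i}(c_i,\a_i)-p(\phi_\xi)|\leqslant 1/k<\epsilon$ there; so $\phi_\xi^M(\c,\a)=\lim_{\mathcal F}\phi_\xi^{M_i}(c_i,\a_i)=p(\phi_\xi)$ for every $\xi$, i.e. $tp(\c/A)=p$.

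In summary, the combinatorial third step is a routine continuous adaptation of Keisler's argument once $(\ast)$ is available, the only new features being the tolerance indices $k$ and the ultralimit form of {\L}o\'s for $\max$. The weight of the proof lies in the second step: converting the extremality of $p$ into joint continuous approximate realisability over $M$, equivalently into the identification of the extreme points of $S_n(A)$ with restrictions of complete continuous types consistent with the diagram of $M$. The delicate points there are carrying out the ultramean representation over the fixed parameter set $A$ and matching the affine and continuous orders on affine formulas.
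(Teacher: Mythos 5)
Your proof is correct in outline and the combinatorial half (tolerance pairs, the monotonic $f$, $\kappa$-goodness, finiteness of $s(i)$, the choice of $c_i$) is essentially the paper's argument; but your route to the key lemma --- that an extreme $p$ is jointly approximately realizable in $M$ --- is genuinely different. The paper stays inside $M$: it extends $p$ by the Kantorovich theorem to a state on $\ell^\infty(M)$, represents it by an ultracharge, observes that the set $V$ of representing ultracharges is a closed face of $\mathbb U(M)$ \emph{because} $p$ is extreme, and takes an extreme point of $V$, which is then an ultrafilter $\mathcal D$ on $M$; finite intersections of $\mathcal D$-large sets being nonempty is exactly your $(\ast)$. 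You instead pass to the continuous type space: you show $S_n(A)=\overline{\mathrm{conv}}(R)$ where $R$ is the (compact, hence closed) set of restrictions of complete CL-types over $A$, and let Milman's converse to Krein--Milman push the extreme $p$ into $R$; realizing the corresponding CL-type and quoting CL-elementarity gives $(\ast)$. Both are Choquet-theoretic uses of extremality and both identify extreme affine types with restrictions of continuous types; the paper's version is more self-contained (one Hahn--Banach extension plus the identification of extreme ultracharges with ultrafilters), while yours buys a cleaner conceptual statement about $S_n(A)$ at the cost of a heavier hammer, namely the AL Keisler--Shelah theorem applied in the expanded language $L_A$ to get the embedding of $M'$ into a powermean of $(M,a)_{a\in A}$ \emph{over} $A$ --- the delicate point you yourself flag. (You could avoid it: extend $p$ by Hahn--Banach to a state on the lattice of CL-formulas over $A$, i.e.\ on $\mathbf C(S^{\mathrm{CL}}_n(A))$, and apply Riesz representation to get a boundary measure whose barycenter is $p$; that yields $p\in\overline{\mathrm{conv}}(R)$ with no ultramean machinery.)

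Two small repairs. First, $\mathbb D_n$ over $A$ has cardinality at least $2^{\aleph_0}$ because of the real scalars, so your enumeration $\xi<\lambda<\kappa$ is not available in general; restrict, as the paper does, to formulas with rational coefficients (a dense subfamily on which $p$ is determined by continuity). Second, $M'$ is not literally an ultramean of copies of $M$; Keisler--Shelah only gives an $L_A$-isomorphism $(M,\a)^\mu\cong(M',\a)^\nu$, so you should phrase the step as embedding the realizer of $p$ into the powermean $(M,\a)^\mu$ and reading off $p(\phi)=\int\phi^M(\d_j,\a)\,d\mu(j)$ there; a Hahn--Banach separation then converts this barycentric identity into $p\in\overline{\mathrm{conv}}(R)$.
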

\begin{proof}
The proof is an adaptation of the proof of Theorem 6.1.8 of \cite{CK1} for the present situation.
For any set $A\subseteq M$ with $|A|<\kappa$ one has that
$(M,a)_{a\in A}\simeq\prod_{\mathcal F}(M_i,a_i)_{a\in A}$.
So, we may forget the parameters and prove that every extreme type of $Th(M)$ is realized in $M$.
For simplicity assume $|\x|=1$ and let $p(x)$ be extreme.
Let $\mathbb U(M)$ be the set of ultracharges on $M$. This is a compact convex set whose extreme
points are ultrafilters. Let
$$V=\big\{\wp\in\mathbb{U}(M):\ \ \ p(\phi)=\int\phi^M(x)d\wp\ \ \ \ \forall\phi\big\}.$$
The type $p$ induces a positive linear functional on the space of functions $\phi^M(x)$.
By the Kantorovich extension theorem (\cite{Aliprantis-Inf}, Th. 8.32)
it extends to a positive linear functional $\bar p$ on $\ell^{\infty}(M)$.
Then, $\bar p$ is represented by integration over an ultracharge on $M$
so that $V$ is non-empty.
Moreover, $V$ is a closed face of $\mathbb{U}(M)$.
In particular, assume for $r\in(0,1)$ one has that $r\mu+(1-r)\nu=\wp\in V$.
Define the types $p_\mu$, $p_\nu$ by setting for each $\phi(x)$
$$p_\mu(\phi)=\int\phi^M d\mu,\ \ \ \ \ \ \ \ p_\nu(\phi)=\int\phi^M d\nu.$$
Then, $rp_\mu+(1-r)p_\nu=p$.
We have therefore that $p_\mu=p_\nu=p$ and hence $\mu,\nu\in V$.

Let $\wp$ be an extreme point of $V$.
Then, $\wp$ is an extreme point of $\mathbb{U}(M)$ and hence it corresponds to
an ultrafilter, say $\mathcal D$ (not to be confused with the ultrafilter $\mathcal F$ on $I$).
We have therefore that $$\ \ \ \ p(\phi)=\int_M\phi^M(x)d\wp=
\lim_{\mathcal D,x}\phi^M(x) \ \ \ \ \ \ \ \forall\phi.$$
Since $|L|+\aleph_0<\kappa$, we may assume $p$ is axiomatized by a family of conditions
$$\{0\leqslant\phi(x):\ \ \phi\in\Sigma\}\equiv p(x)$$ where $\Sigma$ is a set of formulas with $|\Sigma|<\kappa$.
For this purpose, one may use formulas with rational coefficients.
Let $$\Sigma^+=\{\phi+r:\ \ \phi\in\Sigma,\ r>0\ \mbox{is\ rational}\}.$$
Let $I_1\supseteq I_2\supseteq\cdots$ be a chain such that
$I_n\in\mathcal F$ and $\bigcap_n I_n=\emptyset$.
Let $f:S_\omega(\Sigma^+)\rightarrow\mathcal F$ be defined as follows.
$f(\emptyset)=I$ and for nonempty $\tau\in S_\omega(\Sigma^+)$
$$f(\tau)=I_{|\tau|}\cap\big\{i\in I:\ \ \ 0<\sup_x\bigwedge_{\phi\in\tau}\phi^{M_i}(x)\big\}.\ \ \ \ \ \ \ \ \ (*)$$
Since $\mathcal D$ is an ultrafilter, there exists $a\in M$ such that
$0<\phi^M(a)$ for every $\phi\in\tau$.
We have therefore that $f(\tau)\in\mathcal F$.
Also, $f(\tau)\supseteq f(\eta)$ whenever $\tau\subseteq\eta$.
Since $\mathcal F$ is $\kappa$-good, there exists an additive function $g:S_\omega(\Sigma^+)\rightarrow\mathcal F$
such that $g(\tau)\subseteq f(\tau)$ for every $\tau\in\Sigma^+$.
Let $$\tau(i)=\{\phi\in\Sigma^+:\ i\in g\{\phi\}\}.$$
If $\phi_1,...,\phi_n\in\tau(i)$ are distinct, then
$$i\in g\{\phi_1\}\cap\cdots\cap g\{\phi_n\}=g\{\phi_1,...,\phi_n\}\subseteq f\{\phi_1,...,\phi_n\}\subseteq I_n.$$
In particular, if $|\tau(i)|\geqslant n$ then $i\in I_n$
and hence $\tau(i)$ is finite for each $i$ as $\bigcap_n I_n=\emptyset$.
We have also that
$$i\in\bigcap\big\{g\{\phi\}:\ \phi\in\tau(i)\big\}=g(\tau(i))\subseteq f(\tau(i))\subseteq I_{|\tau(i)|}.$$

Now, we define $a\in M$ which realizes $p(x)$. By $(*)$, we may choose $a_i\in M_i$ such that
$$0<\bigwedge_{\phi\in\tau(i)}\phi^{M_i}(a_i).$$
Fix $\phi\in\Sigma^+$. For each $i\in g\{\phi\}\in\mathcal F$ one has that
$\phi\in\tau(i)$ and so $0<\phi^{M_i}(a_i)$. This shows that $0\leqslant\phi^M(a)$.
We conclude that $0\leqslant\phi^M(a)$ for every $\phi\in\Sigma$ and hence $a$ realizes $p(x)$.
\end{proof}

In particular, every compact model is extremally $\kappa$-saturated for every $\kappa$. %Below, we denote Ext$(S_n(T))$ by $E_n(T)$.
The second natural topology on $S_n(T)$ is the metric topology.
Let $M$ be an $\aleph_0$-saturated model of $T$.
Then $$\mathbf{d}(p,q)=\inf\{d(\a,\b):\ \a,\b\in M,\ \a\vDash p,\ \b\vDash q\}$$
defines a metric whose topology is finer that the logic one.
The metric topology is used in the study of definable predicates.

\section{Definable predicates}
As before, $T$ is a complete theory in $L$. Unless otherwise stated, by definable we mean without parameters.
We assume all parameters needed to define a notion are already named in the language.
A predicate $P:M^n\rightarrow\Rn$ is \emph{definable} if there is a sequence $\phi_k(\x)$
of formulas such that $\phi_k^M\rightarrow P$ uniformly on $M^n$.
This sequence determines a definable predicate on every $N\vDash T$ which we denote by $P^N$.
We can treat definable predicates as interpretations of new relation symbols added to the language.
Although, they are to be interpreted by uniformly continuous functions (with predetermined moduli of continuity).
The following is then routine.% (see also \cite{bagheri3}).

\begin{proposition} \label{definitional expansion}
Let $P:M^n\rightarrow\Rn$ be definable. If $N\preccurlyeq M$ then $P^N=P|_N$ and
$(N,P^N)\preccurlyeq(M,P)$. If $M\preccurlyeq N$, then $(M,P)\preccurlyeq(N,P^N)$.
Also, if $M=\prod_\mu M_i$, then $P^M(\a)=\int P^{M_i}(\a_i)d\mu$ for every $\a\in M$.
\end{proposition}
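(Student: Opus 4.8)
The plan is to exploit that a definable predicate is, by definition, a uniform limit of affine formulas, so that all three claims reduce to passing this uniform limit through the relevant model-theoretic operation. First I would record the single fact that makes everything uniform across models: since $T$ is complete, the norm $\|\phi_k-\phi_l\|=\sup_{\a}|\phi_k^N(\a)-\phi_l^N(\a)|$ on $\mathbb D_n(T)$ is independent of the chosen model $N\vDash T$, so the defining sequence $(\phi_k)$ is Cauchy at a rate that does not depend on $N$. Hence for every $\varepsilon>0$ there is $K$ with $\sup_{N^n}|\phi_k^N-P^N|\leqslant\varepsilon$ simultaneously for all $k\geqslant K$ and all $N\vDash T$.

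For the first two clauses I would first settle the restriction identities. If $N\preccurlyeq M$ then $\phi_k^N=\phi_k^M|_N$ for each $k$; restricting the uniform convergence $\phi_k^M\to P$ to $N^n$ and invoking uniqueness of uniform limits gives $P^N=P|_N$, and the same computation with the roles reversed handles the case $M\preccurlyeq N$, yielding $P^N|_M=P$. The elementarity of the expansion is then a substitution argument: given any formula $\psi(\x)$ in $L\cup\{P\}$, let $\psi_k$ be the $L$-formula obtained by replacing every occurrence of $P$ by $\phi_k$. Because the affine connectives and the quantifiers $\inf_x,\sup_x$ each preserve or contract the sup-norm (e.g. $|\sup_x f-\sup_x g|\leqslant\sup_x|f-g|$), the model-uniform convergence $\phi_k\to P$ propagates through the construction of $\psi$, so $\psi_k\to\psi$ uniformly in both $(N,P^N)$ and $(M,P)$. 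Since $\psi_k$ is an $L$-formula and the underlying $L$-embedding is elementary, the value of $\psi_k$ on any tuple from the smaller model is the same whether computed there or in the larger model; passing to the limit transfers this agreement to $\psi$, which is precisely $(N,P^N)\preccurlyeq(M,P)$ in the first case and $(M,P)\preccurlyeq(N,P^N)$ in the second.

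For the ultramean clause I would apply the Ultramean theorem to each $\phi_k$, giving $\phi_k^M(\a)=\int\phi_k^{M_i}(\a_i)\,d\mu$. Fixing $\varepsilon$ and choosing $K$ as above, for $k\geqslant K$ the bound $|\phi_k^{M_i}(\a_i)-P^{M_i}(\a_i)|\leqslant\varepsilon$ holds for every $i$, so integrating gives $|\int\phi_k^{M_i}(\a_i)\,d\mu-\int P^{M_i}(\a_i)\,d\mu|\leqslant\varepsilon$, while $|\phi_k^M(\a)-P^M(\a)|\leqslant\varepsilon$. Combining these through the Ultramean identity yields $|P^M(\a)-\int P^{M_i}(\a_i)\,d\mu|\leqslant2\varepsilon$, and letting $\varepsilon\to0$ closes the argument.

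The step I expect to be the main obstacle is the substitution lemma in the second paragraph: one must verify carefully that replacing the symbol $P$ by an approximating formula $\phi_k$ inside an arbitrary $(L\cup\{P\})$-formula — in particular under nested quantifiers and inside the terms fed to $P$ — preserves uniform convergence with a controlled rate. This is where the Lipschitz bounds $\lambda_\phi$ and the nonexpansiveness of $\inf_x,\sup_x$ do the real work; everything else is bookkeeping around uniform limits.
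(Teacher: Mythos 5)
Your argument is correct and is exactly the routine verification the paper has in mind (the paper states the proposition with no proof, remarking only that it "is then routine"): model-independence of the defining Cauchy rate, substitution of $\phi_k$ for $P$ inside $(L\cup\{P\})$-formulas with the sup-norm contraction of $\inf_x,\sup_x$ and scaling by coefficients, and termwise application of the ultramean theorem followed by a uniform-limit exchange. The only cosmetic remark is that the Lipschitz constants $\lambda_\phi$ are not actually needed for the substitution step, since you replace the relation symbol $P$ itself rather than perturbing its arguments; the scalar coefficients and quantifier nonexpansiveness already give the controlled rate.
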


For a formula $\phi$ in $L\cup\{P\}$, the notion of positive (resp. negative) occurrence of
$P$ in $\phi$ is defined as in \cite{CK1}, i.e. a positive (resp. negative) occurrence of $P$
is one which is in the scope of an even (resp. odd) number of negative real coefficients.

\begin{proposition}
Let $P_1,...,P_m$ be definable predicates on $M\vDash T$ and $\bar L=L\cup\{P_1,...,P_m\}$.
Let $\Sigma$ be an affinely closed set of condition in $\bar L$.
If every condition in $\Sigma$ is satisfied in a model of $T$, then $\Sigma$ is satisfied in a model of $T$.
\end{proposition}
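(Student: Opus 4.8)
The plan is to deduce the statement from the affine compactness theorem (Theorem~\ref{compactness}) applied in the expanded language $\bar L$; the real content is to arrange that the model produced by compactness interprets each $P_i$ by the correct definable predicate and not by some unrelated relation. First I would fix, for each $i$, a sequence $\phi^{(i)}_k$ of $L$-formulas with $\phi^{(i)}_k\to P_i$ uniformly and put $\epsilon^{(i)}_k=\|P_i-\phi^{(i)}_k\|$; this is model-independent because the norm on $\mathbb D_n(T)$ does not depend on the chosen model of $T$. Since the operations that build formulas---scalar multiplication, addition, $\inf_x$ and $\sup_x$---either scale by a fixed factor or are nonexpansive for the supremum norm, substituting $\phi^{(i)}_k$ for $P_i$ throughout any $\bar L$-formula $\theta$ produces an $L$-formula whose $T$-norm distance to $\theta$ tends to $0$; thus every condition of $\bar L$ is a uniform limit of conditions of $L$, which is what lets one treat the $P_i$ as added symbols in the sense of the preliminaries.

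I would then let $T_P$ consist of the affine conditions $\sup_{\x}\bigl(P_i(\x)-\phi^{(i)}_k(\x)\bigr)\leqslant\epsilon^{(i)}_k$ and $\sup_{\x}\bigl(\phi^{(i)}_k(\x)-P_i(\x)\bigr)\leqslant\epsilon^{(i)}_k$ for all $i,k$. By Proposition~\ref{definitional expansion}, any $N\vDash T$ expanded by its definable predicates $P_i^N$ satisfies $T_P$, and conversely any $\bar L$-model of $T\cup T_P$ interprets each $P_i$ as exactly the definable predicate on its $L$-reduct. The heart of the argument is then to check that $T\cup T_P\cup\Sigma$ is affinely satisfiable. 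A condition in its affine closure is a positive combination of conditions from $T$, $T_P$ and $\Sigma$, which I would split into its $(T\cup T_P)$-part and its $\Sigma$-part. Because $\Sigma$ is affinely closed, the $\Sigma$-part is itself a single condition $\sigma\in\Sigma$, so by hypothesis it holds in some $N\vDash T$; expanding $N$ by the $P_i^N$ gives a model of $T\cup T_P$ still satisfying $\sigma$ and, being a model, satisfying every positive combination of $(T\cup T_P)$-conditions. Adding the two inequalities shows the whole combination is satisfied in this single structure, so affine satisfiability holds.

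Applying Theorem~\ref{compactness} then yields an $\bar L$-structure satisfying $T\cup T_P\cup\Sigma$; its $L$-reduct is a model of $T$, on which $T_P$ forces each $P_i$ to be the correct definable predicate, and $\Sigma$ holds there, as required. The step I expect to need the most care is precisely the transfer of affine compactness to $\bar L$: the symbols $P_i$ are only uniform limits of formulas, hence uniformly but not necessarily Lipschitz continuous, so one cannot simply declare them relation symbols of a Lipschitz language. This is circumvented by the approximation of the first paragraph: replacing each $P_i$ by a sufficiently close $\phi^{(i)}_k$ turns the (exactly satisfiable) conditions of the affine closure into $L$-conditions that are approximately satisfiable, which by the remark following Theorem~\ref{compactness} is enough to invoke the theorem.
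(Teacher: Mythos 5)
Your architecture is sensible and the affine-satisfiability check is correct: splitting a combination into its $(T\cup T_P)$-part and its $\Sigma$-part, collapsing the latter to a single condition of $\Sigma$ because $\Sigma$ is affinely closed, and witnessing both parts in one structure $(N,P_1^N,\dots,P_m^N)$ is essentially the observation the paper's proof also rests on. The gap is exactly at the step you flag as delicate, and the circumvention you offer does not close it. You cannot apply Theorem \ref{compactness} (or the remark following it) to the $\bar L$-theory $T\cup T_P\cup\Sigma$, so everything hinges on producing a single $L$-theory to feed to compactness. Naive substitution of $\phi^{(i)}_k$ for $P_i$ in a condition $0\leqslant\theta$ of the affine closure only guarantees, at the witnessing point, that $\theta_k\geqslant -C_\theta\epsilon_k$ where $C_\theta$ depends on the coefficients of $\theta$; the substituted condition $0\leqslant\theta_k$ is therefore satisfiable only up to a \emph{fixed, nonvanishing} error, and may be outright unsatisfiable (take $m=1$, $P=\psi$ a formula with $\sup_{\x}\psi^M=0$ and $\phi_k=\psi-\frac{1}{k}$: then $0\leqslant\sup_{\x}P$ holds in $M$ while $0\leqslant\sup_{\x}\phi_k$ fails in every model of $T$). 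The remark after Theorem \ref{compactness} requires each condition in the affine closure of one fixed $L$-theory to be satisfiable to within \emph{every} $\epsilon>0$; a family of conditions indexed by $k$, each satisfiable up to its own error $C_\theta\epsilon_k$, is not of that form, and the affine closure of the set of all substituted conditions over all $k$ again consists of fixed-$k$ combinations carrying fixed errors. Your two-sided $T_P$ cannot repair this, since it is itself a set of $\bar L$-conditions.

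What is missing is a one-sided correction making the substituted condition an exact pointwise consequence of the original, and this is precisely the device in the paper's proof: writing each condition of $\Sigma$ as $0\leqslant\theta$, replace every positive occurrence of $P$ in $\theta$ by $\phi_k+\frac{1}{k}$ and every negative occurrence by $\phi_k-\frac{1}{k}$, so that $\theta\leqslant\theta_k$ in every model of $T$ with $P$ interpreted canonically. Then $0\leqslant\theta_k$ is exactly satisfiable whenever $0\leqslant\theta$ is, the resulting $L$-theory $T\cup\bigcup_k\Sigma_k$ is affinely satisfiable, and any total model of it, expanded by the canonical definable predicates, satisfies $\Sigma$ because $\theta_k\to\theta$ uniformly. (Equivalently, you could salvage your own scheme by feeding compactness the relaxed $L$-conditions $0\leqslant\theta_k+C_\theta\epsilon_k$ instead of $0\leqslant\theta_k$.) Without one of these corrections the reduction to $L$-compactness does not go through.
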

\begin{proof}
For simplicity let $m=1$. For each $k$ assume $\|P^M-\phi_k^M\|\leqslant\frac{1}{k}$.
Assume also that every condition in $\Sigma$ is of the form $0\leqslant\theta$.
For $0\leqslant\theta$ in $\Sigma$, let $\theta_k$ be the $L$-formula
obtained by replacing every positive occurrence of $P$ with $\phi_k+\frac{1}{k}$ and every
negative occurrence of $P$ with $\phi_k-\frac{1}{k}$.
Let $\Sigma_k$ be set of conditions of the form $0\leqslant\theta_k$ for any $0\leqslant\theta$ in $\Sigma$.
Then $T\cup(\cup_k\Sigma_k)$ is affinely satisfiable (in $M$) and its total models satisfy $T\cup\Sigma$.
\end{proof}

Let $K\subseteq V$ be convex where $V$ is a tvs.
A function $f:K\rightarrow\Rn$ is \emph{convex} if for every $p,q\in K$
and $0\leqslant\gamma\leqslant 1$, one has that
$$f(\gamma p+(1-\gamma)q)\leqslant\gamma f(p)+(1-\gamma)f(q).$$
If equality holds (i.e. $f$ and $-f$ are convex), $f$ is called \emph{affine}.

Given a formula $\phi(\x)$, the function defined on $S_n(T)$ by $\hat\phi(p)=p(\phi)$ is affine,
logic-continuous and $\lambda_{\phi}$-Lipschitz.
Clearly, $\hat\phi=\hat\psi$ if and only if $\phi$ and $\psi$ are $T$-equivalent.
Also, for each $M\vDash T$, a sequence $\phi^M_k$ is Cauchy if and only if $\hat\phi_k$ is Cauchy.
%However, the uniform limit of $\phi^M_k$ need not be Lipschitz.
The set of affine logic-continuous functions on $S_n(T)$ is denoted by $\mathbf{A}(S_n(T))$.
This is a Banach space with the supremum norm (see \cite{Alfsen}).

\begin{proposition} \label{Lipschitz}
The following are equivalent for every $\xi: S_n(T)\rightarrow\Rn$:

  (i) $\xi\in\mathbf{A}(S_n(T))$

  (ii) There is a sequence $\phi_k$ of formulas such that $\hat\phi_k$ converges to $\xi$ uniformly.
\end{proposition}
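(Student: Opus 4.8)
The plan is to prove the two directions separately, with the easy direction being (ii) $\Rightarrow$ (i) and the substantive one being (i) $\Rightarrow$ (ii). For (ii) $\Rightarrow$ (i), I would note that each $\hat\phi_k$ is affine and logic-continuous (as observed in the paragraph preceding the proposition), and that $\mathbf{A}(S_n(T))$ is a Banach space under the supremum norm. A uniform limit of affine functions is affine, and a uniform limit of continuous functions is continuous, so $\xi$ lies in $\mathbf{A}(S_n(T))$. This is essentially immediate from the completeness already cited.

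The real content is (i) $\Rightarrow$ (ii): given an arbitrary affine logic-continuous $\xi$, I must approximate it uniformly by functions of the form $\hat\phi$. First I would identify the right ambient structure: the map $\phi\mapsto\hat\phi$ sends $\mathbb D_n$ into $\mathbf{A}(S_n(T))$, and by the remark that $\hat\phi=\hat\psi$ iff $\phi,\psi$ are $T$-equivalent, this map is an isometric embedding of $\mathbb D_n$ into $\mathbf{A}(S_n(T))$. So the goal reduces to showing that the image of $\mathbb D_n$ is dense in $\mathbf{A}(S_n(T))$. The plan is to invoke a Stone--Weierstrass-type density theorem for spaces of affine continuous functions on a compact convex set: the subspace $\{\hat\phi:\phi\in\mathbb D_n\}$ is a linear subspace (closed under addition and scalar multiplication, since formulas are), it contains the constants (via the formula $1$), and it separates points of $S_n(T)$ (two distinct types differ on some formula $\phi$, hence $\hat\phi$ separates them). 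For the affine analogue of Stone--Weierstrass on $\mathbf{A}(K)$, a linear subspace that contains constants and separates points is dense; this is standard in Choquet theory and presumably available from \cite{Alfsen}.

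The main obstacle I anticipate is justifying the density step rigorously, since the affine Stone--Weierstrass theorem is more delicate than its lattice-free multiplicative cousin. For $\mathbf{A}(K)$ one cannot use products, so the usual Stone--Weierstrass argument does not apply directly; instead one typically argues via Hahn--Banach and the fact that the dual of $\mathbf{A}(K)$ is controlled by measures on $K$, or one uses that a point-separating constant-containing subspace that is also closed under the lattice operations generated inside $\mathbf{A}(K)$ is dense. I would therefore check carefully that the cited Alfsen reference gives precisely the statement that a point-separating linear subspace of $\mathbf{A}(K)$ containing the constants is uniformly dense, and if the available version requires an additional hypothesis (such as closure under a sup/inf operation) I would supply it or route around it. Once density of $\{\hat\phi\}$ in $\mathbf{A}(S_n(T))$ is established, the conclusion is immediate: for any $\xi\in\mathbf{A}(S_n(T))$ and any $k$ there is a formula $\phi_k$ with $\|\hat\phi_k-\xi\|<1/k$, and this sequence $\hat\phi_k$ converges uniformly to $\xi$, which is exactly (ii).
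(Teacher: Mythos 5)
Your proposal is correct and follows essentially the same route as the paper: both directions are handled identically, and the key step --- density in $\mathbf{A}(S_n(T))$ of the point-separating, constant-containing linear subspace $\{\hat\phi:\phi\in\mathbb D_n\}$ --- is exactly what the paper invokes via Corollary I.1.5 of \cite{Alfsen}, which states precisely the density result you were worried might need extra hypotheses.
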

\begin{proof} $(i)\Rightarrow(ii)$: Every $\hat\phi$ is clearly affine and
logic-continuous. Moreover, the subspace of $\mathbf A(S_n(T))$
consisting of these functions contains constant maps and separates points.
Hence, it is dense in $\mathbf A(S_n(T))$ (\cite{Alfsen} Cor. I.1.5).
The inverse direction is obvious.
% $(ii)\Rightarrow(i)$: If $\hat\phi_k$ converges uniformly to $\xi$ then $\xi$ is affine and logic-continuous.
\end{proof}

The set of definable predicates (on $M$ or on any other model of $T$) is denoted by
$\mathbf D_n(T)$ or even $\mathbf D_n$. This is the completion of $\mathbb D$.
Note that if $M$ realizes all types, then
$$\sup_{\a\in M}|\phi^M(\a)|=\sup_{p\in S_n(T)}|\hat\phi(p)|.$$
In particular, $\|\phi\|=\|\hat\phi\|$.
We deduce by Proposition \ref{Lipschitz} that $\mathbf{D}_n$ and $\mathbf{A}(S_n(T))$ are isometrically isomorphic.

The \emph{epigraph} of a function $f:X\rightarrow\Rn$ is the set
$$\mbox{epi}(f)=\{(x,r)\ : \ f(x)\leqslant r\}.$$
We say $P:M^n\rightarrow\Rn$ has a \emph{type-definable epigraph}
if there is a set $\Phi(\x)$ of formulas such that
$$\mbox{epi}(P)=\{(\a,r)\ : \  \phi^M(\a)\leqslant r\ \ \mbox{for\ every}\ \phi\in\Phi\}.$$
It is known that a function $f:K\rightarrow\Rn$ on a convex subset of a vector space is
convex if and only if its epigraph is a convex set (\cite{Aliprantis-Inf} Lem. 5.39).

\begin{proposition} \label{dfncriterion2}
Let $M$ be $\aleph_0$-saturated. Then, a predicate $P:M^n\rightarrow\Rn$ is
definable if and only if both $P$ and $-P$ have type-definable epigraphs.
\end{proposition}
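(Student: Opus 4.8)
The plan is to prove the two directions separately, with the forward one routine and the reverse one carrying the content. For the forward direction, suppose $P$ is definable, say $\|P^M-\phi_k^M\|\leqslant\frac1k$. Since $\phi_k-\frac1k\leqslant P\leqslant\phi_k+\frac1k$ pointwise, one checks directly that $P^M(\a)\leqslant r$ holds if and only if $(\phi_k-\frac1k)^M(\a)\leqslant r$ for every $k$: the ``only if'' uses $\phi_k-\frac1k\leqslant P$, and the ``if'' follows by letting $k\to\infty$ and using $\phi_k^M(\a)\to P^M(\a)$. Hence $\Phi=\{\phi_k-\frac1k:k\geqslant1\}$ type-defines $\mathrm{epi}(P)$, and applying the same reasoning to the definable predicate $-P$ with $\{-\phi_k-\frac1k:k\geqslant1\}$ handles $-P$.

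For the reverse direction, I would first reformulate the hypothesis. A type-definable epigraph for $P$ via a set $\Phi$ means exactly that $P^M(\a)=\sup_{\phi\in\Phi}\phi^M(\a)$ for all $\a$, since the fibre of the epigraph over $\a$ is $[\sup_{\phi\in\Phi}\phi^M(\a),\infty)$. Likewise a type-definable epigraph for $-P$ via $\Psi$ gives $P^M(\a)=\inf_{\psi\in\Psi}(-\psi^M(\a))$. The idea is then to transport $P$ to the type space. Since $M$ is $\aleph_0$-saturated, every $p\in S_n(T)=S_n(\emptyset)$ is realized, and because $\phi^M(\a)=\hat\phi(tp(\a))$ the value $P^M(\a)$ depends only on $tp(\a)$; this defines $\hat P:S_n(T)\to\Rn$ with $\hat P(p)=\sup_{\phi\in\Phi}\hat\phi(p)=\inf_{\psi\in\Psi}(-\hat\psi(p))$, where both equalities hold on all of $S_n(T)$ precisely because saturation makes every type realized (on a realized $p=tp(\a)$ both sides equal $P^M(\a)$).

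The two presentations of $\hat P$ then pull in opposite directions. As a pointwise supremum of the affine logic-continuous functions $\hat\phi$, the function $\hat P$ is convex and lower semicontinuous; as a pointwise infimum of the affine functions $-\hat\psi$, it is concave and upper semicontinuous. Being simultaneously convex and concave it is affine, and being simultaneously lower and upper semicontinuous on the compact space $S_n(T)$ it is continuous and bounded, so $\hat P\in\mathbf A(S_n(T))$. By Proposition \ref{Lipschitz} there are formulas $\phi_k$ with $\hat\phi_k\to\hat P$ uniformly, and since every type is realized in $M$ we have $\|\phi_k^M-P^M\|=\|\hat\phi_k-\hat P\|\to0$, whence $P$ is definable.

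The step I expect to be the crux is the transition from $M$ to $S_n(T)$: one must use $\aleph_0$-saturation to guarantee that every type is realized, so that the two epigraph descriptions yield the \emph{same} well-defined function $\hat P$ on the whole type space rather than merely on a realized part, and so that uniform approximation on $S_n(T)$ transfers back to uniform approximation on $M^n$. Once $\hat P$ is known to be affine and logic-continuous, the density of the $\hat\phi$ in $\mathbf A(S_n(T))$ supplied by Proposition \ref{Lipschitz} finishes the argument immediately.
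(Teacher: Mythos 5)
Your proof is correct and follows essentially the same route as the paper: both directions transport $P$ to a function on $S_n(T)$, show it is affine and logic-continuous, and invoke Proposition \ref{Lipschitz} together with $\aleph_0$-saturation to pull the uniform approximation back to $M^n$. The only cosmetic difference is that you obtain convexity/concavity from the ``pointwise sup (resp.\ inf) of affine continuous functions'' reformulation, whereas the paper verifies convexity directly by realizing $\gamma p_1+(1-\gamma)p_2$ by a tuple $\c$; your version has the small added benefit of making explicit the semicontinuity argument behind the paper's unproved claim that $\xi$ is logic-continuous.
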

\begin{proof} Assume $P$ is definable.
Take a sequence $\phi_k$ of formulas such that $\|P-\phi_k\|\leqslant\frac{1}{k}$ for all $k$.
Then, $$\mbox{epi}(P)=\big\{(\a,r)\ \big|\ \phi_k^M(\a)-\frac{1}{k}\leqslant r\ \ \ \ \ \forall k<\omega\big\}.$$
Similarly, the epigraph of $-P$ is type-definable.
Conversely assume the epigraphs of $P$ and $-P$ are type-definable.
Define a map $\xi: S_n(T)\rightarrow\Rn$ by $\xi(p)=P(\a)$ where $\a\vDash p$.
It is clear that $\xi$ is well-defined and logic-continuous.
We show that it is affine. Assume $$\mbox{epi}(P)=\{(\a,r)\ :\ \forall\phi\in\Phi,\ \ \phi^M(\a)\leqslant r\}.$$
Then, for every $p$ and $\phi\in\Phi$ one has that $p(\phi)\leqslant\xi(p)$.
Fix $p_1$, $p_2$ and let $\c$ realizes $\gamma p_1+(1-\gamma)p_2$.
Then, for all $\phi\in\Phi$ one has that
$$\phi^M(\c)=\gamma p_1(\phi)+(1-\gamma)p_2(\phi)\leqslant\gamma\xi(p_1)+(1-\gamma)\xi(p_2).$$
Therefore, by the assumption
$$\xi(\gamma p_1+(1-\gamma)p_2)=P(\c)\leqslant\gamma\xi(p_1)+(1-\gamma)\xi(p_2)$$
which shows that $\xi$ is convex. Similarly, $-\xi$ is convex.
We conclude that $\xi$ is affine. By Proposition \ref{Lipschitz},
$\hat\phi_k\stackrel{u}\rightarrow\xi$ for some sequence $\phi_k$.
Therefore, $\phi^M_k\stackrel{u}\rightarrow P$.
\end{proof}

Among the standard definability theorems which have appropriate affine variants are the Svenonius definability theorem,
Beth's definability theorem and the following result whose proofs can all be found in \cite{bagheri3}.

\begin{proposition} \label{aut}
Let $M$ be strongly $\aleph_1$-homogeneous and $P$ be definable with parameters.
Then $P$ is $\emptyset$-definable if and only if it is preserved by every automorphism of $M$.
\end{proposition}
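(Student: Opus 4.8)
The plan is to prove the two implications separately; the forward one is routine and the converse carries the content.

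For the easy direction, suppose $P$ is $\emptyset$-definable, so there is a sequence of parameter-free formulas $\phi_k(\x)$ with $\phi_k^M\to P$ uniformly on $M^n$. Any automorphism $g$ of $M$ fixes the interpretation of each $\phi_k$, that is $\phi_k^M(g\a)=\phi_k^M(\a)$ for all $\a$; passing to the uniform limit gives $P(g\a)=P(\a)$. Hence $P$ is preserved by every automorphism. For the converse, assume $P$ is preserved by every automorphism of $M$ and is definable over a parameter tuple $\bar b$ which, since each approximating formula uses finitely many parameters, may be taken countable; say $\phi_k(\x,\bar b)^M\to P$ uniformly. First I would check that $P$ factors through the space of $\emptyset$-types: if $\a,\a'\in M^n$ satisfy $tp(\a)=tp(\a')$, then $(M,\a)\equiv(M,\a')$, so by strong homogeneity there is $g\in\mathrm{Aut}(M)$ with $g\a=\a'$, and invariance yields $P(\a)=P(g\a)=P(\a')$. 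This produces a well-defined $\xi$ on the realized $\emptyset$-types with $P(\a)=\xi(tp(\a))$.

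The core of the argument is to show that $\xi$ extends to a member of $\mathbf A(S_n(T))$, after which Proposition \ref{Lipschitz}, together with the isometry $\mathbf D_n\cong\mathbf A(S_n(T))$, supplies parameter-free formulas $\psi_k$ with $\hat\psi_k\to\xi$ uniformly, hence $\psi_k^M\to P$ uniformly, giving $\emptyset$-definability. To obtain affineness and continuity I would use the restriction map $\pi:S_n(\bar b)\to S_n(T)$ that forgets the parameters $\bar b$, which is a continuous affine surjection of compact convex sets. Since $P$ is $\bar b$-definable, the induced function $\hat P$ lies in $\mathbf A(S_n(\bar b))$, so it is affine and logic-continuous; by the fibre-constancy just established it is constant on the fibres of $\pi$ and therefore factors as $\hat P=\xi\circ\pi$. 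Continuity of $\xi$ then follows because $\pi$, being a continuous surjection from a compact space onto a Hausdorff one, is closed and hence a quotient map. Affineness of $\xi$ follows by lifting: given $p_1,p_2\in S_n(T)$ and $0\leqslant\gamma\leqslant1$, choose $q_1,q_2$ with $\pi(q_i)=p_i$; then $\pi(\gamma q_1+(1-\gamma)q_2)=\gamma p_1+(1-\gamma)p_2$, and affineness of $\hat P$ transfers through $\pi$.

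The main obstacle I anticipate is the interface between homogeneity and saturation in the fibre-constancy step: to collapse a fibre of $\pi$ to a single value of $\xi$ I need the relevant $\emptyset$-types to be realized in $M$, so that the homogeneity argument can supply the conjugating automorphism. This I would handle by working in a model that is simultaneously strongly $\aleph_1$-homogeneous and sufficiently saturated (or by restricting to realized types and arguing by density in the logic topology), and it is exactly here that the countability of $\bar b$ forces the hypothesis to sit at the level of $\aleph_1$ rather than $\aleph_0$. The genuinely affine feature — and the reason affineness survives the descent — is that the parameter-forgetting map $\pi$ is itself \emph{affine}, so convex combinations of types over $\bar b$ project to convex combinations over $\emptyset$; this is precisely what lets an affine $\bar b$-definable predicate descend to an affine $\emptyset$-definable one.
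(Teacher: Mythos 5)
Your forward direction is fine, and the descent machinery you set up is sound as far as it goes: the restriction map $\pi:S_n(\bar b)\to S_n(T)$ is a continuous affine surjection of compact convex sets, hence closed and a quotient map, so any $\hat P\in\mathbf A(S_n(\bar b))$ that is constant on the fibres of $\pi$ does factor through $\pi$ into $\mathbf A(S_n(T))\cong\mathbf D_n$, and Proposition \ref{Lipschitz} then finishes. (For the record, the paper itself gives no proof of this proposition; it is quoted from \cite{bagheri3}, so there is no in-paper argument to compare against.)

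The gap is exactly where you flag it, and neither of your proposed repairs closes it. Homogeneity plus invariance gives $\hat P(q_1)=\hat P(q_2)$ only for types $q_1,q_2\in S_n(\bar b)$ with the same restriction to $\emptyset$ that are \emph{realized in $M$}; you need this for arbitrary types in a common fibre. (i) The density fallback fails because in affine logic the realized types need not be dense in $S_n(\bar b)$ in the logic topology: Proposition \ref{closed convex} only yields that their \emph{closed convex hull} is all of $S_n(\bar b)$ (a basic open set $\bigcap_i[0<\phi_i]$ can be nonempty while no single tuple of $M$ makes every $\phi_i$ positive --- compare $S_1(\mathrm{PrA})=[0,1]$ against the two realized types of the model $\{0,1\}$). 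Worse, fibre-constancy of an affine continuous function on a subset whose closed convex hull is everything does not propagate to all fibres: let $K$ be the triangle with vertices $(0,0),(1,0),(\tfrac12,1)$, let $\pi$ be the first coordinate, $R$ the vertex set and $f(x,y)=y$; then $f$ is (vacuously) fibre-constant on $R$ but not on $\pi^{-1}(\tfrac12)$. (ii) Passing to a simultaneously saturated and strongly homogeneous $N\succcurlyeq M$ silently changes the hypothesis: you would need $P^N$ to be invariant under $\mathrm{Aut}(N)$, and this does not follow from invariance of $P$ under $\mathrm{Aut}(M)$. The natural transfer --- that the defining scheme yields the same predicate over every realization of $tp(\bar b)$ in every model --- amounts to a condition of the form $\sup_{\bar x}|{\cdot}|=0$ over a type-definable set of parameter tuples; this is not an affine condition, and you only know it for realizations inside $M$, so affine compactness can produce a model witnessing its failure without contradicting anything you know about $M$. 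The converse therefore needs a different mechanism (a compactness or Beth/Svenonius-style argument on pairs $(\bar x,\bar y)$ with $\bar y\vDash tp(\bar b)$), not just the factorization through $\pi$.
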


\section{Definable functions}
A function $f:M^m\rightarrow M^n$ is \emph{definable} if $d(f(\x),\y)$ is definable where $|\y|=n$.
The following equalities shows that $f$ is definable if and only if its graph $G_f$ is definable:.
$$d((\x,\y),G_f)=\inf_{\bar{u}}[d(\x,\bar{u})+d(f(\bar{u}),\y)]$$
$$d(f(\x),\y)=\inf_{\bar{v}}[d(\x\bar{v},G_f)+d(\bar{v},\y)].$$
In particular, if $f$ is definable and invertible, then $f^{-1}$ is definable.
Also, one verifies that $f$ is $\lambda$-Lipschitz if and only if $d(f(\x),\y)$ is so.

\begin{lemma} \label{P-D}
If $f$ is definable, then for each definable $P(u,\y)$,\ \ $P(f(\x),\y)$ is definable.
\end{lemma}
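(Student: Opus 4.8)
The plan is to reduce the general definable predicate $P$ to its Lipschitz approximants and to exploit that $f$ enters only through the definable predicate $d(f(\x),\v)$. The key observation is the identity
$$\phi(f(\x),\y)=\inf_{\v}\big[\lambda_\phi\, d(f(\x),\v)+\phi(\v,\y)\big],$$
valid whenever $\phi(u,\y)$ is $\lambda_\phi$-Lipschitz in its first block of variables. Indeed, taking $\v=f(\x)$ shows the right-hand side is at most $\phi(f(\x),\y)$, while the Lipschitz bound $\phi(\v,\y)\geqslant\phi(f(\x),\y)-\lambda_\phi\, d(f(\x),\v)$ shows that every term in the infimum is at least $\phi(f(\x),\y)$.

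Next I would check that for a single formula $\phi$ the right-hand side is a definable predicate in $(\x,\y)$. Since $f$ is definable, $d(f(\x),\v)$ is definable, so $R(\x,\v,\y)=\lambda_\phi\, d(f(\x),\v)+\phi(\v,\y)$ is a scalar multiple of a definable predicate plus a formula, hence definable. Because $\inf_{\v}$ is an affine connective and is nonexpansive for the supremum norm (so that $\eta_j\to R$ uniformly forces $\inf_\v\eta_j\to\inf_\v R$ uniformly), the infimum $\inf_\v R$ is again a uniform limit of formulas, hence definable. Thus $\phi(f(\x),\y)$ is definable for every formula $\phi$.

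Finally I would pass from formulas to $P$. Choose formulas $\phi_k(u,\y)$ with $\|P-\phi_k\|\leqslant 1/k$, each $\phi_k$ being $\lambda_k$-Lipschitz. By the previous step each $\phi_k(f(\x),\y)$ is definable, and for all $\a,\b$,
$$|P(f(\a),\b)-\phi_k(f(\a),\b)|\leqslant\|P-\phi_k\|\leqslant\tfrac1k,$$
so $\phi_k(f(\x),\y)\to P(f(\x),\y)$ uniformly. Since the definable predicates form the complete space $\mathbf D$, the uniform limit $P(f(\x),\y)$ is itself definable.

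The main obstacle is that a general definable $P$ need not be Lipschitz in $u$ with a fixed constant, so the $\inf$-representation cannot be applied to $P$ directly. The remedy is to apply it to each Lipschitz approximant $\phi_k$ with its own constant $\lambda_k$, obtaining definability of every $\phi_k(f(\x),\y)$, and only then to take the uniform limit; the growth of the $\lambda_k$ is harmless because it is absorbed inside each individual $\inf_{\v}$ expression.
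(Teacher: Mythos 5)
Your proof is correct and follows essentially the same route as the paper: first establish the two-sided inequality $\phi(f(\x),\y)=\inf_{\v}[\lambda_\phi d(f(\x),\v)+\phi(\v,\y)]$ for a single $\lambda_\phi$-Lipschitz formula, then pass to $P$ via uniform limits of the approximants $\phi_k$. The extra care you take about the possibly unbounded growth of the constants $\lambda_k$ is the right observation and is implicitly how the paper's two-step argument works as well.
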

\begin{proof}
First assume $P$ is the formula $\phi$ and show that
$$\phi^M(f(\x),\z)\leqslant\inf_{\y}[\phi^M(\y,\z)+\lambda_{\phi} d(f(\x),\y)]\leqslant\phi^M(f(\x),\z).$$
Then, assume $\phi_k^M\stackrel{u}{\rightarrow}P$ and deduce that $\phi_k^M(f(\x),\y)\stackrel{u}{\rightarrow}P(f(\x),y)$.
\end{proof}

As a consequence, if $f$ and $g$ are definable, then so is $g\circ f$.
Also, since projections are definable, $f=(f_1,...,f_n)$ is definable if and only if $f_1,...,f_n$ are so.
We have also the following stronger result which holds if $M$ is $\aleph_0$-saturated.
A set $X\subseteq M^n$ is \emph{type-definable} if it is the set of common solutions of
a family of conditions.

\begin{proposition} \label{graph}
Let $M$ be $\aleph_0$-saturated. Then, $f:M^n\rightarrow M$ is definable if and only if $G_f$ is type-definable.
\end{proposition}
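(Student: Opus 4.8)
The plan is to prove both implications, with the forward one essentially free and the reverse one carrying all the weight.

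For the forward direction, suppose $f$ is definable, so that $P(\x,y)=d(f(\x),y)$ is a definable predicate with approximating formulas $\phi_k$ satisfying $\|P-\phi_k\|\le\frac1k$. Then $G_f=\{(\x,y):P(\x,y)\le 0\}=\{(\x,y):\phi_k(\x,y)\le\frac1k\ \text{for all }k\}$ is a common solution set of conditions, so $G_f$ is type-definable. This uses neither saturation nor the function property.

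For the reverse direction, assume $G_f$ is type-definable, say $G_f=\{(\x,y):\theta(\x,y)\le 0\ \text{for all }\theta\in\Gamma\}$, and set $P(\x,y)=d(f(\x),y)$. By Proposition~\ref{dfncriterion2} it suffices to show that both $P$ and $-P$ have type-definable epigraphs. I would first record the easy inclusions coming from the Lipschitz bounds: since $(\x,f(\x))\in G_f$ we have $\theta(\x,f(\x))\le 0$, whence $\theta(\x,y)\le\lambda_\theta\,d(f(\x),y)=\lambda_\theta P(\x,y)$ for each $\theta\in\Gamma$; consequently $\mathrm{epi}(P)\subseteq\{(\x,y,r):\theta(\x,y)\le\lambda_\theta r\ \text{for all }\theta\in\Gamma\}$. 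The corresponding over-approximating family for $-P$ is produced symmetrically, exploiting that the fiber $\{z:(\x,z)\in G_f\}$ reduces to the single point $f(\x)$.

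The hard part is the reverse inclusion, i.e. showing that these type-definable over-approximations do not overshoot the true epigraphs, equivalently that $P$ is simultaneously a pointwise supremum and a pointwise infimum of formulas. This is where $\aleph_0$-saturation enters. Given $(\a,b)$ with $d(f(\a),b)>r$, I would consider the partial type asserting the existence of $z$ with $(\a,z)\in G_f$ and $d(z,b)\le r$; if this were approximately satisfiable, $\aleph_0$-saturation would realize such a $z$, and single-valuedness of $f$ would force $z=f(\a)$, contradicting $d(f(\a),b)>r$. Thus the type is unsatisfiable, and affine compactness yields a finite obstruction which, through the Hahn--Banach separation underlying Proposition~\ref{closed convex}, is packaged as a formula $\theta$ with $\theta\le 0$ on $G_f$ witnessing that $(\a,b,r)$ fails the over-approximating condition. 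Carrying this out for both $P$ and $-P$ supplies the two type-definable epigraphs, and Proposition~\ref{dfncriterion2} then gives the definability of $P$; equivalently, one shows that $d(\cdot,G_f)$ is itself a definable predicate and concludes through the displayed identities relating $d(f(\x),y)$ and $d((\x,y),G_f)$. The genuine obstacle throughout is precisely this upgrade from the closed-zero-set content of type-definability to the uniform approximability demanded by definability, which is available only because the fibers of a function graph are singletons and because $M$ is $\aleph_0$-saturated.
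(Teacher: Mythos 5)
Your proposal is correct and is essentially the paper's own argument: the paper also reduces to Proposition \ref{dfncriterion2} and type-defines $\mathrm{epi}(\pm d(f(\x),y))$ by the family $\Lambda_r$ of conditions $\inf_u[\alpha\phi(\x,u)\pm d(u,y)]\leqslant r$ ($\phi\in\Gamma$, $\alpha\geqslant 0$), verifying the nontrivial inclusion exactly as you do, via $\aleph_0$-saturation of the fiber type $\{\phi(\a,u)\leqslant 0\}\cup\{d(u,b)\leqslant r\}$ together with single-valuedness of $f$. The only difference is presentational: the paper writes the obstruction formulas down explicitly in advance, whereas you extract them a posteriori from affine compactness, which yields the same family.
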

\begin{proof} For the nontrivial direction, assume $\Gamma(\x,u)$ is a set of conditions
of the form $\phi(\x,u)\leqslant0$ which type-defines $G_f$. Let
$$\Lambda_r(\x,y)=\big\{\inf_u[\alpha\phi(\x,u)+d(u,y)]\leqslant r\ :\ \ \
\phi(\x,u)\leqslant0\in\Gamma,\ \alpha\geqslant0\big\}.$$
Clearly, if $d(f(\a),b)\leqslant r$, then $(\a,b)$ satisfies $\Lambda_r(\x,y)$ (set $u=f(\a)$).
Conversely, if $(\a,b)$ satisfies $\Lambda_r(\x,y)$, then the type
$$\{\phi(\a,u)\leqslant 0\ : \ \phi(\x,u)\leqslant0\in\Gamma\}\cup\{d(u,b)\leqslant r\}$$
is affinely satisfied in $M$. So, by saturation, it is satisfied by some $c\in M$.
Then $f(\a)=c$ and $d(f(\a),b)\leqslant r$. We therefore have that
$$d(f(\a),b)\leqslant r\ \ \Leftrightarrow\ \ (\a,b)\vDash\Lambda_r(\x,y)\ \ \ \ \ \ \ \forall\a,b$$
and hence the epigraph of $d(f(\x),y)$ is type-definable.
Similarly, one shows that the epigraph of $-d(f(\x),y)$ is type-definable.
We conclude by Proposition \ref{dfncriterion2} that $d(f(\x),y)$ is a definable predicate.
\end{proof}

The following is a definable variant of the existence of invariant probability measures
for continuous functions on compact metric spaces.

\begin{proposition}
Let $f:M\rightarrow M$ be definable. Then there exists a type $p(x)\in S_1(T)$
which is $f$-invariant, i.e. $p(\phi(x))=p(\phi(f(x))$ for every $\phi(x)$.
In particular, if $M$ is $\aleph_0$-saturated, then there exists $c\in M$ such that $c\equiv f(c)$.
\end{proposition}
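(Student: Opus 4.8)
The plan is to transplant the classical Krylov--Bogolyubov/Markov--Kakutani theorem on the existence of invariant measures into the type space. Here $S_1(T)$ plays the role of the space of probability measures on a compact metric space: it is a weak* compact convex set, and a definable self-map $f$ should induce a pushforward-type operator on it whose fixed points are precisely the $f$-invariant types. So the whole proposition reduces to producing a fixed point of this operator and then realizing it.

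First I would define the induced map $f_*\colon S_1(T)\to S_1(T)$. For a formula $\phi(x)$, Lemma \ref{P-D} guarantees that $\phi(f(x))$ is a definable predicate, hence lies in the completion $\mathbf D_1$; since each type $p$ is a norm-one positive functional on $\mathbb D_1$, it extends uniquely and continuously to $\mathbf D_1$, so the value $p(\phi(f(x)))$ is well defined. I then set $(f_*p)(\phi)=p(\phi(f(x)))$. Checking that $f_*p$ is again a type is routine: linearity in $\phi$ is immediate, $(f_*p)(1)=p(1)=1$ because $1\circ f=1$, and positivity follows since $T\vDash 0\leqslant\phi$ forces $0\leqslant\phi(f(x))$ as a predicate. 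Likewise $f_*$ is affine in $p$, directly from the linearity of the pairing.

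The technical heart is the logic-continuity of $f_*$. I would argue that for each fixed $\phi$ the map $p\mapsto p(\phi(f(x)))$ is logic-continuous: writing $\phi(f(x))$ as a uniform limit $\psi_k^M\to\phi(f(x))$ of formulas, the uniform estimate $|p(\phi(f(x)))-p(\psi_k)|\leqslant\|\phi(f(x))-\psi_k\|$ exhibits it as a uniform limit of the logic-continuous functions $\hat\psi_k$, hence logic-continuous (a uniform limit of continuous functions is continuous, regardless of metrizability of the topology). Since the logic topology on $S_1(T)$ is exactly the coarsest one making every $\hat\phi$ continuous, this yields continuity of $f_*$ into $S_1(T)$. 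With $f_*$ a continuous affine self-map of the compact convex set $S_1(T)$, the Markov--Kakutani fixed point theorem produces $p$ with $f_*p=p$, equivalently $p(\phi(x))=p(\phi(f(x)))$ for all $\phi$, which is the desired invariant type. Equivalently, one may take a weak* limit point of the Cesàro averages $\frac1n\sum_{k<n}f_*^k p_0$, which is the direct transcription of the Krylov--Bogolyubov argument and explains the stated analogy.

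For the second assertion, suppose $M$ is $\aleph_0$-saturated. Then $M$ realizes the type $p\in S_1(T)=S_1(\emptyset)$, say by $c$. For every $\phi$ one has $tp(f(c))(\phi)=\phi^M(f(c))=(f_*\,tp(c))(\phi)$, so $tp(f(c))=f_*\,tp(c)=f_*p=p=tp(c)$, i.e. $c\equiv f(c)$. I expect the main obstacle to be the continuity step: establishing that $f_*$ genuinely lands in $S_1(T)$ continuously is exactly where definability of $f$ (through Lemma \ref{P-D}) is essential, since without it $\phi(f(x))$ need not be a definable predicate and the pushforward would fail to be defined on all of $S_1(T)$.
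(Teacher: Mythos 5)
Your proposal is correct and follows essentially the same route as the paper: both induce the affine logic-continuous pushforward $p\mapsto p(\phi(f(x)))$ on the compact convex set $S_1(T)$, obtain a fixed point from a standard fixed point theorem (the paper cites Schauder--Tychonoff where you invoke Markov--Kakutani, an immaterial difference for a single affine continuous self-map), and realize the invariant type by $\aleph_0$-saturation. Your added verifications of well-definedness and logic-continuity of the pushforward are correct fillings-in of details the paper leaves implicit.
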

\begin{proof} $f$ induces an affine continuous map
$\hat f:S_1(T)\rightarrow S_1(T)$ by $$\hat f(p)(\phi(x))=p(\phi(f(x))).$$
By the Schauder-Tychonoff fixed point theorem (\cite{Conway} p.150),
$\hat f$ has a fixed point $p$. Then, $p(\phi(x))=p(\phi(f(x))$ for each $\phi(x)$.
If $M$ is $\aleph_0$-saturated, let $c\in M$ realize $p$. Then $c\equiv f(c)$.
\end{proof}

%For example, if $M$ is an $\aleph_0$-saturated group, for each $a$ there exists $c$ such that $ac\equiv c$ in the language of $(G,a)$.
We may also consider an Abelian group $G$ of definable bijections $f:M\rightarrow M$ and use
the Markov-Kakutani theorem (\cite{Conway} p.151) to find $p\in S_1(T)$ such that $\hat f(p)=p$ for every $f\in G$.
%In particular, if $M$ is an $\aleph_1$-saturated group and $G\subseteq M$ is a countable Abelian subgroup,
%then there exists $c\in M$ such that for every $g\in G$,\ \ $gc\equiv c$ in the language of $(M,a)_{a\in G}$.
%In particular, if $G$ is an Abelian (discrete) subgroup with $|A|<\kappa$,
%then there exists $p(x)\in S_1(G)$ such that $p(x)=p(gx)$ for every $g\in G$.

\section{Definable sets}
A closed $D\subseteq M^n$ is definable if $d(\x,D)=\inf_{\a\in D}d(\x,\a)$ is definable.
We use the convention $\inf_{\a\in\emptyset}P(\a)=\|P\|$.
Definable sets are not closed under Boolean combinations. However, if $D,E$ are definable, then so are
$D\times E$ and $\{\x:\ \exists y\ \x y\in D\}$.
%If $D\subseteq M^{n+1}$ is definable, its projection $\pi(D)$ on $M^n$ is definable as $d(\x y,D)=\inf_y d(\x,\pi(D))$.

\begin{remark} \label{3conditions}
Let $D\subseteq M^n$ be definable and set $P(\x)=d(\x,D)$.
Then the following properties hold for every $\x,\y\in M^n$:

(i) $0\leqslant P(\x)$

(ii) $P(\x)-P(\y)\leqslant d(\x,\y)$

(iii) $0\leqslant\inf_{\x}\sup_{\y}[sP(\x)-rP(\y)-sd(\x,\y)]$
\ \ \ \ \ \ \ \ $\forall r,s\geqslant0$.
\end{remark}

The inequality (iii) states that for each $\a$,
$\{P(\y)\leqslant0,\ d(\a,\y)\leqslant P(\a)\}$ is affinely approximately satisfiable in $M$.
The properties (i)-(iii) characterize definable sets.

\begin{proposition} \label{dfnconditions}
Let $M$ be extremally $\aleph_0$-saturated and $P:M^n\rightarrow\Rn$ be definable.
If $P$ satisfies (i)-(iii) above, then $P(\x)=d(\x,D)$ where $D=Z(P)=\{\a:\ P(\a)=0\}\neq\emptyset$.
\end{proposition}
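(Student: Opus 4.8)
The plan is to establish, for each fixed $\a\in M^n$, the two inequalities $P(\a)\leqslant d(\a,D)$ and $d(\a,D)\leqslant P(\a)$, where $D=Z(P)$. The first is immediate from (i)--(ii): any $\b\in D$ has $P(\b)=0$, so (ii) gives $P(\a)\leqslant P(\b)+d(\a,\b)=d(\a,\b)$, and taking the infimum over $\b\in D$ yields $P(\a)\leqslant d(\a,D)$. The real content is the reverse inequality, which amounts to producing for each $\a$ a point $\c\in D$ with $d(\a,\c)\leqslant P(\a)$; this will simultaneously show $D\neq\emptyset$. The obvious candidate is a realization of the partial type $\Gamma_\a(\y)=\{P(\y)\leqslant0,\ d(\a,\y)\leqslant P(\a)\}$ of Remark \ref{3conditions}. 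The obstacle is that extremal saturation only realizes \emph{facial} types, whereas the convex set $\{q\in S_n(\a):\Gamma_\a\subseteq q\}$ (Proposition \ref{closed convex}) is not a face: the condition $d(\a,\y)\leqslant P(\a)$ cuts out a sublevel set of an affine function, and such sets are never faces.

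I would therefore replace $\Gamma_\a$ by a genuine face contained in it, built by two applications of Remark \ref{facial}. First, condition (iii) (the approximate satisfiability of $\Gamma_\a$ recorded just after Remark \ref{3conditions}) together with affine compactness (Theorem \ref{compactness}) makes $C_\a=\{q\in S_n(\a):\Gamma_\a\subseteq q\}$ nonempty. Since $P\geqslant0$ by (i), we have $S_n(\a)\vDash -P\leqslant0$, so applying Remark \ref{facial} to the (trivially facial) full type space with $\theta=-P$ shows that $F_0=\{q:P(\y)\leqslant0\}=\{q:\widehat P(q)=0\}$ is facial, the required satisfiability coming from $C_\a\subseteq F_0$. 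Next, put $\psi(\y)=d(\a,\y)$ and let $m=\min_{q\in F_0}\widehat\psi(q)$, which is attained because $F_0$ is closed in the compact space $S_n(\a)$ and $\widehat\psi$ is affine logic-continuous. On $F_0$ one has $\widehat\psi\geqslant m$, i.e. $F_0\vDash m-\psi\leqslant0$, so a second application of Remark \ref{facial} (with $\theta=m-\psi$) shows that $F_1=F_0\cup\{\psi\leqslant m\}=\{q\in F_0:\widehat\psi(q)=m\}$ is facial. The place where (iii) really enters is the inequality $m\leqslant P(\a)$: any $q\in C_\a$ lies in $F_0$ and satisfies $\widehat\psi(q)\leqslant P(\a)$, so the minimum of $\widehat\psi$ over $F_0$ cannot exceed $P(\a)$.

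Finally I would realize $F_1$. It is a facial type over the finite parameter set $\a$, so since $M$ is extremally $\aleph_0$-saturated it is realized by some $\c\in M^n$. Then $P(\c)=0$, so $\c\in D$ and in particular $D\neq\emptyset$, while $d(\a,\c)=\widehat\psi(q)=m\leqslant P(\a)$ for the type $q\in F_1$ realized by $\c$. Hence $d(\a,D)\leqslant d(\a,\c)\leqslant P(\a)$, and combining with the easy inequality gives $P(\a)=d(\a,D)$ for every $\a$, i.e. $P(\x)=d(\x,D)$. The main obstacle is precisely this passage from the non-facial set $C_\a$ to the genuine sub-face $F_1$: one cannot impose $d(\a,\y)\leqslant P(\a)$ directly and remain facial, so one instead minimizes the distance functional over $\{P=0\}$ and uses hypothesis (iii) only through the comparison $m\leqslant P(\a)$.
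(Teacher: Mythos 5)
Your proof is correct and follows the same overall architecture as the paper's: derive $P(\a)\leqslant d(\a,D)$ from (i)--(ii), and for the reverse inequality use Remark \ref{facial} plus extremal saturation to produce a point of $D$ within distance $P(\a)$ of $\a$. Where you diverge is at the crucial faciality step, and your version is the more careful one. The paper simply asserts that $\{P(\y)\leqslant0,\ d(\a,\y)\leqslant P(\a)\}$ is a facial type ``since $P(\y)\leqslant0$ implies that $\y\in D$''; read literally this does not follow from Remark \ref{facial}, because one would need $\{P(\y)\leqslant0\}\vDash d(\a,\y)\leqslant P(\a)$, which is false in general, and as you observe a sublevel set of the affine function $\hat\psi$ is a face only when the bound happens to be its minimum over the ambient face. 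Your repair --- first form the face $F_0=\{\hat P=0\}$ (nonempty by (iii) with $s=0$ and affine compactness), then pass to the sub-face $F_1$ where $\hat\psi=d(\a,\cdot)$ attains its minimum $m$ on $F_0$, and invoke (iii) only through the comparison $m\leqslant P(\a)$ --- is exactly what is needed to make the appeal to extremal saturation legitimate, and it delivers $D\neq\emptyset$ in the same stroke rather than as a separate application of (iii). So: same strategy, but your two-step construction of a genuine face closes a real imprecision in the published argument; a posteriori one sees $m=P(\a)=d(\a,D)$, so the paper's set does turn out to be the face $F_1$, but that cannot be assumed at the outset without circularity.
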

\begin{proof} As stated in the previous section, $P$ can be regarded as a formula so that $(M,P)$ is extremally saturated.
Then, taking $s=0$, $r=1$ in (iii) and using extremal saturation,
one checks that $D$ is nonempty. By (ii), we have that $P(\x)\leqslant d(\x,\y)$
for all $\y\in D$. Hence $P(\x)\leqslant d(\x,D)$. For the inverse inequality, fix $\a\in M$.
By (iii) (and using extremal saturation),
$$\{P(\y)\leqslant 0,\ d(\a,\y)\leqslant P(\a)\}$$ is affinely satisfiable in $M$.
By Remark \ref{facial}, this is a facial type since $P(\y)\leqslant0$ implies that $\y\in D$.
So, by (i) and extremal saturation, there exists $\b$ such that
$$P(\b)=0, \ \ d(\a,\b)\leqslant P(\a).$$
Therefore, $d(\a,D)\leqslant d(\a,\b)\leqslant P(\a)$
and hence $d(\x,D)\leqslant P(\x)$ for all $\x\in M$.
\end{proof}

\begin{proposition} \label{dfnrestriction}
Let $M$ be extremally $\aleph_0$-saturated and $M\preccurlyeq N$.
If $D\subseteq N^n$ is definable, then $C=D\cap M^n$ is definable and for each
$\x\in M$, $d(\x,D)=d(\x,C)$. In particular, $(M,d(\x,C)\preccurlyeq(N,d(\x,D))$.
If $D\neq\emptyset$ then $C\neq\emptyset$.
\end{proposition}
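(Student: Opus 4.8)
The plan is to transfer the distance predicate $P = d(\x,D)$ from $N$ down to $M$ by elementarity, observe that it still satisfies the conditions characterizing definable sets, and then read off $C$ as its zero set using extremal saturation. Since $D$ is definable, $P = d(\x,D)$ is a definable predicate; being given by a fixed Cauchy sequence of formulas $\phi_k$, it determines interpretations $P^K$ on every $K\vDash T$. First I would invoke Proposition \ref{definitional expansion} with $M\preccurlyeq N$ to obtain both $P^M = P^N|_{M^n}$ and the elementary extension $(M,P^M)\preccurlyeq(N,P^N)$ of the structures expanded by a symbol for $P$.

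Next I would note that, because $P^N = d(\x,D)$ is the distance to the (nonempty) definable set $D\subseteq N^n$, Remark \ref{3conditions} shows $P^N$ satisfies (i)--(iii) in $N$. Each of these is a closed condition in the language $L\cup\{P\}$: (i) is $0\leqslant\inf_{\x}P(\x)$, (ii) is $\sup_{\x,\y}[P(\x)-P(\y)-d(\x,\y)]\leqslant0$, and (iii) is the family $0\leqslant\inf_{\x}\sup_{\y}[sP(\x)-rP(\y)-sd(\x,\y)]$ indexed by $r,s\geqslant0$. As these are sentences, elementarity $(M,P^M)\preccurlyeq(N,P^N)$ forces $P^M$ to satisfy (i)--(iii) in $M$ as well.

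Then, since $M$ is extremally $\aleph_0$-saturated and $P^M$ is a definable predicate satisfying (i)--(iii), Proposition \ref{dfnconditions} yields $P^M(\x)=d(\x,Z(P^M))$ with $Z(P^M)=\{\a\in M^n:\ P^M(\a)=0\}\neq\emptyset$. It remains to identify this zero set with $C$: for $\a\in M^n$ one has $P^M(\a)=P^N(\a)=d(\a,D)$, and since $D$ is closed, $d(\a,D)=0$ precisely when $\a\in D$, i.e. when $\a\in D\cap M^n=C$. Hence $Z(P^M)=C$, so $C$ is definable with $d(\x,C)=P^M(\x)$; evaluating at $\x\in M$ gives $d(\x,C)=P^N(\x)=d(\x,D)$, and the elementary extension $(M,d(\x,C))\preccurlyeq(N,d(\x,D))$ is just $(M,P^M)\preccurlyeq(N,P^N)$. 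The nonemptiness of $C$ when $D\neq\emptyset$ is exactly the nonemptiness clause of Proposition \ref{dfnconditions}.

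I expect the main obstacle to be the substantive content hidden in the distance equality $d(\x,D)=d(\x,C)$: a priori $C\subseteq D$ only gives $d(\x,C)\geqslant d(\x,D)$, and the reverse inequality asserts that the infimum over the possibly larger set $D\subseteq N^n$ is already (approximately) attained within $M^n$. This is precisely where extremal $\aleph_0$-saturation of $M$ enters, through Proposition \ref{dfnconditions}, and the care needed is to check that (i)--(iii) are genuine closed conditions so that they transfer along $(M,P^M)\preccurlyeq(N,P^N)$; the remainder is bookkeeping.
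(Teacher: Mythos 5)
Your proof is correct and follows essentially the same route as the paper: restrict $d(\x,D)$ to $M$ via Proposition \ref{definitional expansion}, note that conditions (i)--(iii) of Remark \ref{3conditions} transfer along $(M,P^M)\preccurlyeq(N,P^N)$, and apply Proposition \ref{dfnconditions} to identify the restriction with $d(\x,C)$. The only (harmless) divergence is in the final clause: you get $C\neq\emptyset$ from the nonemptiness part of Proposition \ref{dfnconditions}, whereas the paper extracts it directly from $\inf_{\x}d(\x,D)<1$; both are fine, and your explicit verification that (i)--(iii) are closed conditions in $L\cup\{P\}$ is a detail the paper leaves implicit.
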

\begin{proof} By Proposition \ref{definitional expansion}, $Q(\x)=d(\x, D)|_{M^n}$ is definable in $M$
and $(M,Q)\preccurlyeq (N,d(\x,D))$.
Note that $Q$ satisfies conditions (i)-(iii) in Remark \ref{3conditions}.
So, since the zeroset of $Q$ is $C$, by Proposition \ref{dfnconditions} we have that $Q(\x)=d(\x,C)$.
For the last part, use the fact that $\inf_xd(x,D)<1$.
\end{proof}

Similarly, if $M\preccurlyeq N$ and $N$ is extremally $\aleph_0$-saturated, one promotes a definable
$C\subseteq M^n$ to a definable $D\subseteq N^n$ such that $C=D\cap M^n$.

Assume $D\subseteq M^n$ and $P(\x,\y)\leqslant P(\x,\z)+\lambda d(\z,\y)$ for all $\x,\y,\z$.
Take the infimum first over $\y\in D$ and then over $\z\in M^n$ to obtain
$$\inf_{\y\in D} P(\x,\y)\leqslant \inf_{\z}[P(\x,\z)+\lambda d(\z,D)].$$
Allowing $\z\in D$, we see that
$$\inf_{\y\in D} P(\x,\y)=\inf_{\z}[P(\x,\z)+\lambda d(\z,D)].\ \ \ \ \ \ \ (*)$$

\begin{proposition} \label{dfnprojection}
$D\subseteq M^n$ is definable if and only if for each definable
$P:M^{m+n}\rightarrow\Rn$, the predicate $\inf_{\y\in D}P(\x,\y)$ is definable.
\end{proposition}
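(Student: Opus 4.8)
The plan is to prove the two implications separately, with the reverse direction essentially immediate and the forward direction carrying the content.

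For the reverse direction I would specialize the hypothesis. Assuming that $\inf_{\y\in D}P(\x,\y)$ is definable for every definable $P$, take $m=n$ and $P(\x,\y)=d(\x,\y)$, the metric on $M^n$, which is a formula and hence definable. Then $\inf_{\y\in D}P(\x,\y)=\inf_{\y\in D}d(\x,\y)=d(\x,D)$, so the hypothesis gives at once that $d(\x,D)$ is definable, i.e.\ $D$ is definable by definition. No saturation or further work is needed here.

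For the forward direction, assume $D$ is definable, so that $d(\x,D)$ is a definable predicate, and let $P$ be any definable predicate; the goal is to show $R(\x)=\inf_{\y\in D}P(\x,\y)$ is definable. First I would reduce to the case of formulas: choose formulas $\phi_k$ with $\|P-\phi_k\|\leqslant\frac1k$. Since taking infima is $1$-Lipschitz for the supremum norm, $R_k(\x)=\inf_{\y\in D}\phi_k(\x,\y)$ converges uniformly to $R$, so by completeness of $\mathbf D_n$ it suffices to show each $R_k$ is definable. Next I would apply the identity $(*)$ to $\phi_k$, which is $\lambda_{\phi_k}$-Lipschitz in $\y$, giving
$$R_k(\x)=\inf_{\z}\big[\phi_k(\x,\z)+\lambda_{\phi_k}d(\z,D)\big].$$
Here the bracketed predicate is definable in $(\x,\z)$, being the sum of the formula $\phi_k$ and the definable predicate $d(\z,D)$. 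Finally I would record that $\inf_\z$ preserves definability: if $\eta_j\to Q$ uniformly with each $\eta_j$ a formula, then each $\inf_\z\eta_j$ is again a formula and $\|\inf_\z\eta_j-\inf_\z Q\|\leqslant\|\eta_j-Q\|\to0$, so $\inf_\z Q$ is definable. Applying this to the bracket shows $R_k$ is definable, which completes the argument.

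The only genuine point requiring care is that the identity $(*)$ is valid solely for predicates Lipschitz in $\y$, which is exactly why the reduction to formulas in the first step is essential: a general definable $P$ need not be Lipschitz, but each approximating formula $\phi_k$ is, with a constant $\lambda_{\phi_k}$ that may blow up as $k\to\infty$. This blow-up is harmless because I treat each $k$ separately, use $(*)$ formula-by-formula, and only recombine at the end through the uniform limit. I would flag this interplay as the main subtlety, though it is a standard pattern for lifting results about formulas to definable predicates.
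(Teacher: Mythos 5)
Your proposal is correct and follows essentially the same route as the paper: the reverse direction specializes to $P=d(\x,\y)$, and the forward direction uses the identity $(*)$ for the Lipschitz formulas $\phi_k$ approximating $P$ uniformly, together with the $1$-Lipschitzness of $\inf$ for the supremum norm to pass to the limit. The reorganization (approximate first, then apply $(*)$ termwise) is only cosmetic, and your flagged subtlety about the constants $\lambda_{\phi_k}$ possibly growing with $k$ is handled exactly as the paper intends.
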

\begin{proof} For the if part, take $P=d(\x,\y)$. For the converse, use the equality $(*)$
above if $P$ is $\lambda$-Lipschitz.
If $P$ is arbitrary definable, let $\phi_k^M(\x,\y)\stackrel{u}{\longrightarrow}P(\x,\y)$.
Then, verify that $\inf_{\y\in D}\phi_k^M(\x,\y)\stackrel{u}{\longrightarrow}\inf_{\y\in D}P(\x,\y)$.
\end{proof}

In particular, if $f$ is a definable function and $D$ is a definable set then $f(D)$ is definable:
$$d(x,f(D))=\inf_{t\in D} d(x,f(t)).$$

\begin{corollary} \label{restriction}
Assume $M\preccurlyeq N$,\ $D\subseteq N^m$ is definable and $d(x,D)|_M=d(x,C)$ where $C\subseteq M^m$.
Then for each definable predicate $P:N^{n+m}\rightarrow\Rn$ and $\x\in M^n$ one has that\ \
$\inf_{\y\in D}P(\x,\y)=\inf_{\y\in C}P|_M(\x,\y)$. In particular, $C$ and $D$ have the same diameter.
\end{corollary}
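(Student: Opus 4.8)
The plan is to reduce the statement to the case of a single formula and then exploit the key identity $(*)$, which rewrites an infimum constrained to the definable set $D$ as an ordinary infimum of a formula in the language expanded by the distance predicate $d(\cdot,D)$. The value of such an expanded-language formula is preserved under elementary extension, and by hypothesis $d(\cdot,D)$ restricts to $d(\cdot,C)$ on $M$; so the two sides of the claimed equality become the $N$- and the $M$-value of one and the same formula evaluated at the same parameters.

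In detail, I first approximate: choose formulas $\phi_k$ with $\phi_k^N\to P$ uniformly on $N^{n+m}$. Since $\|\inf_{\y\in D}\phi_k-\inf_{\y\in D}P\|\leqslant\|\phi_k-P\|$ (the same device used in the proof of Proposition \ref{dfnprojection}), it suffices to prove the identity for each formula $\phi_k$ and then pass to the uniform limit. So fix a formula $\phi$ with Lipschitz constant $\lambda$. Treating $d(\cdot,D)$ as a new relation symbol $R$, the expression $\Theta(\x)=\inf_{\z}[\phi(\x,\z)+\lambda R(\z)]$ is a formula of $L\cup\{R\}$, and by $(*)$ (whose Lipschitz hypothesis holds for $\phi$) one has $\Theta^N(\x)=\inf_{\y\in D}\phi^N(\x,\y)$ when $R$ is interpreted as $d(\cdot,D)$.

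Now I invoke Proposition \ref{definitional expansion}: since $M\preccurlyeq N$, adjoining the definable predicate $d(\cdot,D)$ preserves the elementary extension, that is $(M,d(\cdot,D)|_M)\preccurlyeq(N,d(\cdot,D))$, while $d(\cdot,D)|_M=d(\cdot,C)$ by hypothesis. Hence for every $\a\in M^n$ the value $\Theta^N(\a)$ equals $\Theta^M(\a)=\inf_{\z\in M^m}[\phi^M(\a,\z)+\lambda d(\z,C)]$, and applying $(*)$ inside $M$ to the definable set $C$ turns this into $\inf_{\y\in C}\phi^M(\a,\y)$. Thus $\inf_{\y\in D}\phi^N(\a,\y)=\inf_{\y\in C}\phi^M(\a,\y)$ for every formula $\phi$ and every $\a\in M^n$; letting $\phi=\phi_k$ and sending $k\to\infty$ gives the equality for $P$. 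For the diameter claim I apply this equality to the $1$-Lipschitz predicate $-d(\x,\y)$, obtaining $\sup_{\y\in D}d(\a,\y)=\sup_{\y\in C}d(\a,\y)$ for $\a\in M^m$; the predicate $R(\x)=\sup_{\y\in D}d(\x,\y)=-\inf_{\y\in D}[-d(\x,\y)]$ is definable on $N$ by Proposition \ref{dfnprojection} and restricts to $\sup_{\y\in C}d(\cdot,\y)$ on $M$. A second application, now with empty parameter block and $P=-R$ infimized over $D$, yields $\inf_{\x\in D}[-R(\x)]=\inf_{\x\in C}[-R|_M(\x)]$, i.e. $\mathrm{diam}(D)=\mathrm{diam}(C)$.

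The one genuinely delicate point is that $D$ may contain points of $N^m$ lying outside $M^m$, so the constrained infimum cannot be restricted naively. Identity $(*)$ is exactly what removes this difficulty, re-expressing $\inf_{\y\in D}$ through the distance predicate so that the computation becomes evaluation of a formula preserved by $\preccurlyeq$. I expect the main obstacle to be purely in keeping this bookkeeping clean: checking that $(*)$ applies in both $N$ and $M$ with the correct Lipschitz constants and that the uniform approximation commutes with the infima; the conceptual content is entirely carried by $(*)$ together with Proposition \ref{definitional expansion}.
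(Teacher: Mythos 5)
Your proof is correct and follows exactly the route the paper intends: the paper's own (two-line) proof just says to split into the Lipschitz and non-Lipschitz cases as in Proposition \ref{dfnprojection} — i.e.\ use identity $(*)$ together with Proposition \ref{definitional expansion} for Lipschitz formulas and pass to uniform limits for general definable predicates — and to get the diameter from $\sup_{\x\y\in D}d(\x,\y)$ by a double application. Your write-up is simply a careful expansion of that same argument.
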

\begin{proof}
For the first part consider Lipschitz and non Lipschitz cases as in the proof of the preceding proposition.
Also, the diameter of $D$ is obtained by $\sup_{\x\y\in D}d(\x,\y)$.
\end{proof}

\begin{proposition} \label{dfn3conditions}
For a closed $D\subseteq M^n$ the following are equivalent:

(i) $D$ is definable.

(ii) There exists a definable predicate $P:M^n\rightarrow\Rn^+$ such that
$$\forall\x\in D,\ P(\x)=0\ \ \ \ \ \ \mbox{and}
\ \ \ \ \ \ \forall\x\in M^n, \ d(\x,D)\leqslant P(\x).$$

(iii) For each $k$ there exists a definable predicate $P_k:M^n\rightarrow\Rn$ such that
$$\forall\x\in D,\ P_k(\x)\leqslant0\ \ \ \ \ \ \mbox{and} \ \ \ \ \ \ \forall\x\in M^n, \ d(\x,D)\leqslant P_k(\x)+\frac{1}{k}.$$

\end{proposition}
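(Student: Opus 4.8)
I would organize the three implications as a cycle $(i)\Rightarrow(ii)\Rightarrow(iii)\Rightarrow(i)$, with only the last one carrying any real content. For $(i)\Rightarrow(ii)$ I would simply take $P(\x)=d(\x,D)$: this is definable by hypothesis, it is nonnegative, it vanishes on $D$, and the required inequality $d(\x,D)\leqslant P(\x)$ holds with equality. For $(ii)\Rightarrow(iii)$, given the predicate $P$ from $(ii)$ I would set $P_k=P$ for every $k$; then $P_k\leqslant 0$ on $D$ and $d(\x,D)\leqslant P(\x)=P_k(\x)\leqslant P_k(\x)+\frac1k$, as needed.

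The substantive step is $(iii)\Rightarrow(i)$, where I would regularize the given predicates by inf-convolution with the metric. For each $k$ set
$$Q_k(\x)=\inf_{\y}\big[P_k(\y)+d(\x,\y)\big].$$
Since $P_k(\y)+d(\x,\y)$ is definable in $(\x,\y)$ (a uniform limit of formulas $\phi_j(\y)+d(\x,\y)$) and the operation $\inf_{\y}$ preserves uniform convergence, $Q_k$ is again a definable predicate, and it is $1$-Lipschitz in $\x$. The plan is then to sandwich $Q_k$ between $d(\x,D)-\frac1k$ and $d(\x,D)$. For the upper bound I would restrict the infimum to $\y\in D$, where $P_k(\y)\leqslant 0$, obtaining $Q_k(\x)\leqslant\inf_{\y\in D}d(\x,\y)=d(\x,D)$. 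For the lower bound I would rewrite the everywhere inequality of $(iii)$ as $P_k(\y)\geqslant d(\y,D)-\frac1k$ and combine it with the triangle inequality $d(\x,D)\leqslant d(\x,\y)+d(\y,D)$ to get
$$Q_k(\x)\geqslant\inf_{\y}\big[d(\y,D)+d(\x,\y)\big]-\tfrac1k\geqslant d(\x,D)-\tfrac1k.$$

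Combining the two bounds gives $\|Q_k-d(\cdot,D)\|\leqslant\frac1k$, so the definable predicates $Q_k$ converge uniformly to $d(\cdot,D)$, which is therefore definable; this is exactly $(i)$.

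The one point that requires care, and the reason the naive argument fails, is that affine logic has no lattice connectives: one cannot simply truncate $P_k$ by $\max(P_k,0)$ to force nonnegativity and Lipschitz behaviour. The inf-convolution $Q_k$ is the device that produces a genuinely $1$-Lipschitz definable predicate inside the affine fragment while keeping it pinned to $d(\cdot,D)$ up to $\frac1k$. Accordingly, the main obstacle I would watch is confirming that $Q_k$ remains a member of $\mathbf D_n$ (rather than merely a uniformly continuous predicate), which rests on $\inf_{\y}$ preserving uniform limits of formulas, and that both estimates hold uniformly in $\x$.
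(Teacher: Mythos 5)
Your proposal is correct and follows essentially the same route as the paper: the trivial implications are handled identically, and the key step $(iii)\Rightarrow(i)$ uses exactly the same inf-convolution $Q_k(\x)=\inf_{\y}[P_k(\y)+d(\x,\y)]$ with the same two-sided sandwich $d(\x,D)-\frac1k\leqslant Q_k(\x)\leqslant d(\x,D)$. Your added remarks on why $Q_k$ stays in $\mathbf D_n$ are a sound elaboration of a point the paper leaves implicit.
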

\begin{proof} (i)$\Rightarrow$(ii): Take $P(\x)=d(\x,D)$.

(ii)$\Rightarrow$(iii): Take $P_k=P$.

(iii)$\rightarrow$(i): For each $k$ set $Q_k(\x)=\inf_{\y}\ [d(\x,\y)+P_k(\y)]$.
We then have that
$$Q_k(\x)\leqslant\inf_{\y\in D}\ [P_k(\y)+d(\x,\y)]\leqslant\inf_{\y\in D}d(\x,\y)=d(\x,D).$$
On the other hand, we have that $d(\x,D)\leqslant d(\x,\y)+d(\y,D)$.
So, using the assumption,
$$d(\x,D)-\frac{1}{k}\leqslant\inf_{\y}\ [d(\x,\y)+d(\y,D)-\frac{1}{k}]\leqslant Q_k(\x).$$
We conclude that $d(\x,D)$ is the uniform limit of $Q_k(\x)$, hence definable.
\end{proof}

%For example, end-sets of bi-Lipschitz definable predicates are definable.
%If $0\leqslant P$ and for some $\lambda>0$,\ \ $\lambda d(\x,\y)\leqslant|P(\x)-P(\y)|$,
%then $d(\x,Z(P))\leqslant\frac{1}{\lambda}P(\x)$ for all $\x$.

\begin{lemma} \label{zerosets}
Let $P,Q:M^n\rightarrow\Rn^+$ be definable where $M$ is extremally $\aleph_0$-saturated.
Then, $Z(P)\subseteq Z(Q)$ if and only if for each $\epsilon>0$ there is $\lambda\geqslant0$
such that for all $\x\in M$ one has that $Q(\x)\leqslant\lambda P(\x)+\epsilon$.
\end{lemma}
\begin{proof}
We prove the non-trivial part. First assume $M$ is $\aleph_0$-saturated.
Assume the claim does not hold. So, there exists $\epsilon>0$ such that
the set $$\{Q(\x)\geqslant\lambda P(\x)+\epsilon:\ \ \lambda\geqslant0\}$$
is satisfiable in $N$ by say $\c\in N$. We must therefore have that $P(\c)=0$ and hence
$Q(\c)=0$ which is impossible.

For the extremal case, let $M\preccurlyeq N$ where $N$ is $\aleph_0$-saturated.
We have only to show that $Z(P^N)\subseteq Z(Q^N)$. Assume not.
Then $P^N(\b)=0$ and $Q^N(\b)=r$ for some $r>0$ and $\b\in N$.
We may assume $r$ is the biggest real number with this property.
So, indeed $$T,P(\x)\leqslant0\vDash Q(\x)\leqslant r.$$
We conclude that $\{P(\x)\leqslant0,\ r\leqslant Q(\x)\}$ is a satisfiable facial type,
hence satisfiable in $M$. This is a contradiction.
\end{proof}

The following corollary gives a simpler condition for definability of an end-set if the model is
extremally saturated.

\begin{corollary} \label{satzeroset}
Let $M$ be extremally $\aleph_0$-saturated and $P:M^n\rightarrow\Rn^+$ be definable.
Then $D=Z(P)$ is definable if and only if for each $\epsilon>0$,
there exists $\lambda\geqslant0$ such that $$d(\x,D)\leqslant\lambda P(\x)+\epsilon\ \ \ \ \ \ \forall\x\in M.$$
\end{corollary}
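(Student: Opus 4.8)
The plan is to obtain both implications directly from the two preceding results: Lemma~\ref{zerosets} for the forward direction and Proposition~\ref{dfn3conditions} for the converse. I first record that, being definable, $P$ is continuous, so $D=Z(P)$ is closed in $M^n$; this is the standing hypothesis needed to apply Proposition~\ref{dfn3conditions}.

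For the direction ``$D$ definable $\Rightarrow$ estimate'', I would set $Q(\x)=d(\x,D)$, a definable predicate valued in $\Rn^+$. Since $D$ is closed, $Z(Q)=D=Z(P)$, so in particular $Z(P)\subseteq Z(Q)$. Lemma~\ref{zerosets}, applied to the pair $(P,Q)$, then furnishes for each $\epsilon>0$ a constant $\lambda\geqslant0$ with $d(\x,D)=Q(\x)\leqslant\lambda P(\x)+\epsilon$ for all $\x$, which is the asserted inequality. This is the only place where the extremal $\aleph_0$-saturation of $M$ is used.

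For the converse I would verify clause (iii) of Proposition~\ref{dfn3conditions}. Given the estimate, fix $k$ and apply it with $\epsilon=\tfrac1k$ to get $\lambda_k\geqslant0$ with $d(\x,D)\leqslant\lambda_kP(\x)+\tfrac1k$ for all $\x$; put $P_k=\lambda_kP$. Being a scalar multiple of a definable predicate, $P_k$ is definable; for $\x\in D=Z(P)$ we have $P_k(\x)=0\leqslant0$, and for arbitrary $\x$ the chosen inequality is precisely $d(\x,D)\leqslant P_k(\x)+\tfrac1k$. Hence the family $(P_k)_k$ witnesses clause (iii), and Proposition~\ref{dfn3conditions} yields that $D$ is definable.

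Since everything reduces to invoking the two prior statements, I do not anticipate a genuine obstacle; the points to check are the identity $Z(d(\,\cdot\,,D))=D$ (valid because $D$ is closed) and the degenerate case $D=\emptyset$, which is covered by the conventions on $\inf$ over the empty set. It is worth noting that the converse uses no saturation hypothesis beyond the closedness of $D$ already built into Proposition~\ref{dfn3conditions}; saturation enters solely through Lemma~\ref{zerosets} in the forward direction.
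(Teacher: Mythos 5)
Your proof is correct and follows essentially the same route as the paper: the forward direction is Lemma~\ref{zerosets} applied to $P$ and $Q(\x)=d(\x,D)$, which share the zeroset $D$, and the converse verifies clause (iii) of Proposition~\ref{dfn3conditions} with $P_k=\lambda_k P$. Your additional remarks on closedness of $D$ and where saturation enters are accurate but not a departure from the paper's argument.
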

\begin{proof} If $D$ is definable, the mentioned condition holds by Lemma \ref{zerosets} since
$P$ and $d(\x,D)$ have the same zeroset. Conversely, assume the above condition holds.
For each $k$ take $\lambda_k$ such that $d(\x,D)\leqslant\lambda_k P(\x)+\frac{1}{k}$.
Then, part (iii) of Proposition \ref{dfn3conditions} holds for the predicate $P_k=\lambda_k P(\x)$.
Hence, $D$ is definable.
\end{proof}

In general, zerosets have little chance to be definable.
The following proposition shows that in big models, type-definable sets are either trivial or big.

\begin{proposition} \label{dfncompact}
Let $M$ be $\aleph_0$-saturated and $D\subseteq M^n$ a nonempty compact type-definable set.
Then $D$ is a singleton.
\end{proposition}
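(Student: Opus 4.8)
My plan is to argue by contradiction, exploiting the affine (convex) structure of types together with the finite additivity available in powermeans to manufacture, inside $M$ itself, arbitrarily large finite subsets of $D$ whose points are uniformly separated. This directly contradicts the total boundedness that compactness forces on $D$. So suppose $D$ is not a singleton and fix distinct $\a,\b\in D$ with $\delta=d(\a,\b)>0$. Let $\Gamma(\x)$ be a set of conditions type-defining $D$, so a tuple lies in $D$ exactly when it satisfies $\Gamma$. The guiding observation is that, although a convex combination of a \emph{single} type with itself collapses distances (the metric midpoint sits strictly between the originals), a \emph{joint} combination realized through distinct coordinate patterns of a powermean does not.

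The plan for the construction is as follows. Fix $m$. I would take $I=\{0,1\}^m$ with the uniform probability measure $\mu$ (an ultracharge on $I$) and form the powermean $M^{\mu}=\prod_{\mu}M$. For $1\leqslant j\leqslant m$ let $\mathbf x_j$ be the class of the family whose $i$-th coordinate equals $\a$ when $i_j=0$ and equals $\b$ when $i_j=1$. By the ultramean theorem each $\mathbf x_j$ satisfies $\Gamma$: every coordinate lies in $D$, the conditions of $\Gamma$ are affine, and so they pass through the integral by monotonicity. Hence $\mathbf x_j\in D^{M^{\mu}}$. For $j\neq k$ the $i$-th coordinates of $\mathbf x_j$ and $\mathbf x_k$ differ (one is $\a$, the other $\b$, at distance $\delta$) precisely when $i_j\neq i_k$, and again by the ultramean theorem
$$d(\mathbf x_j,\mathbf x_k)=\delta\cdot\mu\{i\in I:\ i_j\neq i_k\}=\frac{\delta}{2}.$$
Thus $M^{\mu}$ contains $m$ points of $D^{M^{\mu}}$ that are pairwise at distance $\delta/2$.

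Next I would transfer this back to $M$. Since the diagonal embedding gives $M\preccurlyeq M^{\mu}$ and $\a,\b\in M$, the type $tp(\mathbf x_1,\dots,\mathbf x_m/\{\a,\b\})$ is consistent with $Th(M,\a,\b)$; being a type over the finite parameter set $\{\a,\b\}$ in finitely many variables, $\aleph_0$-saturation realizes it in $M$ by some $\c_1,\dots,\c_m$. These satisfy $\Gamma$, so $\c_j\in D$, and they satisfy $d(\c_j,\c_k)=\delta/2$ for $j\neq k$, since both the conditions of $\Gamma(x_j)$ and the conditions $d(x_j,x_k)=\delta/2$ belong to the transferred type. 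As $m$ was arbitrary, $D$ contains $(\delta/2)$-separated subsets of every finite size, so $D$ is not totally bounded; this contradicts compactness, and therefore $D$ must be a singleton.

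The routine parts are the two ultramean computations and the saturation transfer. The one genuine idea — and the step I would be most careful to get right — is the separation estimate: one must present the two chosen points through \emph{distinct coordinate patterns} over the index set, so that the averaged distance between the resulting midpoints stays bounded below by a fixed constant instead of collapsing, as it would for an ordinary one-variable convex combination. The binary cube $\{0,1\}^m$ with the uniform measure is merely a convenient choice that makes every pairwise fraction equal to $1/2$; any family of coordinate patterns with pairwise measure-distance bounded away from $0$ would serve equally well.
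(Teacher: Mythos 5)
Your proof is correct, but it reaches the conclusion by a genuinely different route from the paper's. The paper argues syntactically: it shows that the partial type $\Sigma=\{\frac{r}{2}\leqslant d(x_i,x_j):\ i<j<\omega\}$ together with $\bigcup_i\Gamma(x_i)$ is \emph{affinely} satisfiable in $M$, via a greedy induction that assigns to each variable one of the two fixed witnesses $a,b\in D$ so that every nonnegative linear combination of the separation conditions is already satisfied inside $\{a,b\}$; affine compactness and $\aleph_0$-saturation then produce arbitrarily large $(r/2)$-separated finite subsets of $D$. You instead realize the separated configuration semantically: in the powermean of $M$ over the uniform ultracharge on $\{0,1\}^m$, the $m$ coordinate patterns yield points satisfying $\Gamma$ (the conditions, being affine, pass through the integral) at exact pairwise distance $\delta/2$, and elementarity of the diagonal map plus $\aleph_0$-saturation of $M$ pulls this complete type (finitely many variables, finitely many parameters) back into $M$. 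The two arguments encode the same combinatorial fact --- each pair of patterns disagrees on exactly half the index set, which is where the paper's $r/2$ and your $\delta/2$ both come from --- but yours trades the hands-on verification of affine satisfiability for an appeal to the ultramean theorem, which is cleaner and gives exact distances rather than lower bounds, while the paper's version is more self-contained and handles the full $\omega$-variable type in one stroke. Both end by contradicting total boundedness of $D$.
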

\begin{proof} Assume $n=1$. Let $D$ be type-defined by $\Gamma(x)$ and $d(a,b)=r>0$ for $a,b\in D$.
We first show that the partial type
$$\Sigma=\{\frac{r}{2}\leqslant d(x_i,x_j)\ : \ \ \  i<j<\omega\}$$
is affinely realized in the set $\{a,b\}$. Take a condition
$$\ \ \ \ \ \ \ A_n=\frac{r}{2}\sum_{i<j\leqslant n}\alpha_{ij}\leqslant\sum_{i<j\leqslant n}\alpha_{ij} d(x_i,x_j)
=\sigma_{n}(x_0,...,x_{n})\ \ \ \ \ \ \ \ \alpha_{ij}\geqslant0$$
and assume by induction that
$A_{n-1}\leqslant\sigma_{n-1}^M(e_0,...,e_{n-1})$ where $e_i$ is either $a$ or $b$.
Verify that for one of $e_n=a$ and $e_n=b$ we must have that
$$\frac{r}{2}\sum_{i<n}\alpha_{in}\leqslant\sum_{i<n}\alpha_{in}d(e_i,e_n).$$
Hence $$A_n=A_{n-1}+\frac{r}{2}\sum_{i<n}\alpha_{in}\leqslant\sigma^M_{n-1}(e_0,...,e_{n-1})+\sum_{i<n}\alpha_{in}d(e_i,e_n)=\sigma_n^M(e_0,...,e_n).$$
We have just shown that $\Sigma\cup\Gamma(x_1)\cup\Gamma(x_2)\cup\cdots$
is affinely satisfiable in $M$.
By $\aleph_0$-saturation, any finite part of $\Sigma$ is satisfied by elements of $D$.
This contradicts the compactness of $D$.
\end{proof}

In particular, the notion of algebraic closure defined in CL or first order logic is meaningless in AL.
We can however define definable closure of a set.
The notion definable set over a set $A\subseteq M$ of parameter is defined in the usual way.
A tuple $\a\in M^n$ is said to be $A$-definable if $d(\x,\a)$ is $A$-definable.
As stated above, projection of a definable set is definable.
So, if $\a$ is $A$-definable, then every $a_i$ is $A$-definable.
Conversely, if every $a_i$ is $A$-definable, then
$d(\x,\a)=\sum_{i=1}^n d(x_i,a_i)$ which shows that $\a$ is $A$-definable.
For $A\subseteq M$,\ \ $\textrm{dcl}_M(A)$ denotes the set of points which are $A$-definable.
Clearly, it is topologically closed.

\begin{proposition} \label{well-dcl}
Let $A\subseteq M\preccurlyeq N$. Then $dcl_M(A)=dcl_N(A)$.
\end{proposition}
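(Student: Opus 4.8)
*Let $A\subseteq M\preccurlyeq N$. Then $dcl_M(A)=dcl_N(A)$.*

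The plan is to reduce the statement, via the observations recorded just before the proposition, to the single-point case: since $\a$ is $A$-definable exactly when each coordinate $a_i$ is, and since $\textrm{dcl}_M(A)$ and $\textrm{dcl}_N(A)$ both denote the sets of $A$-definable points, it suffices to prove that a point $a$ lying in $M$ is $A$-definable over $M$ if and only if it is $A$-definable over $N$. After naming the parameters of $A$ in the language (so that all predicates below may be taken $\emptyset$-definable), the assertion becomes the equivalence: $d(x,a)$ is a definable predicate in the sense of $M$ iff $d(x,a)$ is a definable predicate in the sense of $N$.

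For one direction, suppose $a$ is $A$-definable in $N$, i.e.\ there is a sequence of formulas $\phi_k$ with $\phi_k^N\stackrel{u}{\to}d(x,a)$ on $N$. Since $M\preccurlyeq N$ and $a\in M$, I would restrict to $M$ and invoke Proposition \ref{definitional expansion}: the predicate $d(x,a)$ is a definable predicate whose restriction to $M$ is again definable, and $\phi_k^M\stackrel{u}{\to}d(x,a)|_M=d(x,a)$ on $M$. Thus $a$ is $A$-definable in $M$. For the converse, suppose $a$ is $A$-definable in $M$, witnessed by $\phi_k^M\stackrel{u}{\to}d(x,a)$ on $M$. The same sequence $\phi_k$ determines a definable predicate $P$ on $N$ with $P=P^N$ and $(M,P|_M)\preccurlyeq(N,P)$ by Proposition \ref{definitional expansion}. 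I would then argue that $P^N$ must coincide with $d(x,a)$ on all of $N$, not merely on $M$.

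The crux is this last identification, and it is where the main obstacle lies. On $M$ the predicate $P=d(x,a)$ satisfies the three characterizing conditions of a distance-to-a-definable-set predicate for the singleton $D=\{a\}$, namely the properties (i)--(iii) of Remark \ref{3conditions} (which in the singleton case are elementary inequalities about $d(x,a)$). These conditions are expressed by closed conditions in the language with the symbol for $P$, hence they transfer upward along $(M,P|_M)\preccurlyeq(N,P^N)$. If $N$ is extremally $\aleph_0$-saturated, Proposition \ref{dfnconditions} applies directly to $P^N$ on $N$ and yields $P^N(x)=d(x,Z(P^N))$; and since $a$ is the unique zero (the condition $P(x)=0$ forces $d(x,a)=0$, which transfers up), $Z(P^N)=\{a\}$ and $P^N(x)=d(x,a)$ on $N$. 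The obstacle is that $N$ need not be extremally saturated as given. To handle this I would pass to a common extremally $\aleph_0$-saturated elementary extension $N^{*}\succcurlyeq N$ (such extensions exist by Proposition \ref{sat1}, taking a suitable good ultrapower), carry out the argument there to conclude $P^{N^{*}}(x)=d(x,a)$ on $N^{*}$, and then restrict back down using Proposition \ref{definitional expansion} to obtain $P^N(x)=P^{N^{*}}(x)|_N=d(x,a)$ on $N$. This shows $a$ is $A$-definable in $N$, completing the equivalence and hence the equality of the two definable closures.
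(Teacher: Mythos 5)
There is a genuine gap. Your opening reduction --- ``it suffices to prove that a point $a$ lying in $M$ is $A$-definable over $M$ if and only if it is $A$-definable over $N$'' --- at best yields $\mathrm{dcl}_M(A)=\mathrm{dcl}_N(A)\cap M$, not the stated equality. The set $\mathrm{dcl}_N(A)$ ranges over points of $N$, and a priori some $a\in N\setminus M$ could be $A$-definable in $N$; the inclusion $\mathrm{dcl}_N(A)\subseteq\mathrm{dcl}_M(A)$ therefore requires showing that no such point exists, and this is precisely the half of the paper's proof that your proposal never addresses. The paper's argument for it is short but not automatic: given $a\in\mathrm{dcl}_N(A)$, the restriction $P(x)=d(x,a)|_M$ is a definable predicate on $M$ with $(M,P)\preccurlyeq(N,d(\cdot,a))$ by Proposition \ref{definitional expansion}; hence $\inf_{x\in M}P(x)=\inf_{x\in N}d(x,a)=0$, so one may choose $a_k\in M$ with $d(a_k,a)=P(a_k)\leqslant\frac{1}{k}$, and since $M$ is complete, hence closed in $N$, the limit $a$ of the $a_k$ already lies in $M$. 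Only after this step does the ``$a\in M$'' dichotomy you set up cover all cases.

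For the part you do treat, both directions are essentially sound, but the ``crux'' in your last paragraph is over-engineered. Once the parameters of $A$ are named and $a\in M$ is fixed as a constant, the two closed conditions $\sup_x(P(x)-d(x,a))\leqslant 0$ and $\sup_x(d(x,a)-P(x))\leqslant 0$ hold in $(M,P,a)$ and transfer to $(N,P^N,a)$ by Proposition \ref{definitional expansion}, giving $P^N=d(\cdot,a)$ on all of $N$ directly --- no appeal to Remark \ref{3conditions}, Proposition \ref{dfnconditions}, or a detour through an extremally saturated $N^{*}$ is needed. (The paper instead reduces at the outset to $\aleph_0$-saturated $N$ and identifies the resulting definable set $D$ with $\{a\}$ via Corollary \ref{restriction}.) None of this, however, repairs the missing inclusion $\mathrm{dcl}_N(A)\subseteq M$, which is a real omission rather than a routine detail; as written the proposal does not prove the proposition.
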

\begin{proof} It is sufficient to prove the claim for the case where $N$ is $\aleph_0$-saturated.
There is no harm if we further assume $A=\emptyset$. Let $a\in dcl_M(\emptyset)$.
The unique definable extension of $d(x,a)$ to $N$ satisfies the conditions (i)-(iii) of Remark \ref{3conditions}.
So, for some definable $D\subseteq N$ we have that $d(x,D)|_M=d(x,a)$.
By Corollary \ref{restriction}, $D=\{a\}$ and hence $a\in dcl_N(\emptyset)$.
Conversely assume $a\in dcl_N(\emptyset)$. Let $P(x)= d(x,a)|_M$.
Then $(M,P)\preccurlyeq (N,d(\cdot,a))$
and hence $$\inf_{x\in M} P(x)=\inf_{x\in N}d(x,a)=0.$$
For each $k$ take $a_k\in M$ such that $0\leqslant P(a_k)\leqslant\frac{1}{k}$.
Then, $d (a_k,a)=P(a_k)\leqslant\frac{1}{k}$ which means that $a_k\rightarrow a$.
Therefore, $a\in M$ and $P(a)=0$.
We have also that $$d^M(x,a)=d^N(x,a)=P(x) \ \ \ \ \ \ \ \forall x\in M$$
which shows that $a$ is definable in $M$.
\end{proof}

So, $\textrm{dcl}_M(A)$ does not depend on $M$ and we may simply denote it
by $\textrm{dcl}(A)$. The following properties are also proved easily:

\indent 1. $A\subseteq \textrm{dcl}(A)$.\\
\indent 2. If $A\subseteq\textrm{dcl}(B)$ then $\textrm{dcl}(A)\subseteq\textrm{dcl}(B)$.\\
\indent 3. If $a\in\textrm{dcl}(B)$ then $a\in\textrm{dcl}(A)$ for some countable $A\subseteq B$.\\
\indent 4. If $A$ is a dense subset of $B$ then $\textrm{dcl}(A)=\textrm{dcl}(B)$.\\
\indent 5. If $h:M^n\rightarrow M$ is $A$-definable and $\a\in\textrm{dcl}(A)$ then $h(\a)\in\textrm{dcl}(A)$.

\section{Principal types}
It is not true that if the logic and metric topologies coincide at a type $p$,
then $p$ is realized in every model.
For example, for the theory of probability algebras we have that $S_1(\textrm{PrA})=[0,1]$
and the two topologies coincide.
However, only the extreme types are realized in the model $\{0,1\}$.
For a complete type $p(\x)$ set $p(M)=\{\a\in M^n:\ tp(\a)=p\}$.

\begin{proposition} \label{principal type}
Assume $p(M)$ is nonempty definable for some $M\vDash T$.
Then $p(N)$ is nonempty definable for any $N\vDash T$ which is extremally $\aleph_0$-saturated.
\end{proposition}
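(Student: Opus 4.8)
The plan is to set $P(\x)=d(\x,p(M))$, which is definable by hypothesis, and to regard it as a new relation symbol as permitted by Proposition~\ref{definitional expansion}. Write $\hat P$ for the associated affine, logic-continuous function on $S_n(T)$, normalized so that $\hat P(tp(\a))=P^{N'}(\a)$ on every model $N'$ (the identification underlying Proposition~\ref{Lipschitz}). First I would record that $P$ satisfies conditions (i)--(iii) of Remark~\ref{3conditions}; each of these is a closed condition of $(M,P)$, and since $T$ is complete the definitional expansion forces $(M,P)\equiv(N,P^N)$, so $P^N$ satisfies (i)--(iii) too. Proposition~\ref{dfnconditions}, applied to the extremally $\aleph_0$-saturated $N$, then hands me a nonempty definable set $D=Z(P^N)$ with $P^N=d(\x,D)$. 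The whole proposition now reduces to the identification $D=p(N)$, equivalently to showing $Z(\hat P)=\{p\}$ inside $S_n(T)$.

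The hard part will be exactly this last equality, since a priori the nonnegative affine function $\hat P$ could vanish on a whole nontrivial face of $S_n(T)$ instead of at the single point $p$. To rule this out I would encode the fact that every formula is constant on $p(M)$ as one affine sentence about $M$. Fix a formula $\psi(\x)$, put $r=p(\psi)$, and let $\lambda_\psi$ be its Lipschitz constant. Since $\psi^M(\y)=r$ for all $\y\in p(M)$ and $P^M(\x)=\inf_{\y\in p(M)}d(\x,\y)$, Lipschitzness gives $\psi^M(\x)-\lambda_\psi P^M(\x)\leqslant r$ for every $\x$; that is, the sentence $\sup_{\x}[\psi-\lambda_\psi P]\leqslant r$ holds in $(M,P)$. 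Passing to an $\aleph_0$-saturated $M'\succcurlyeq M$, this sentence persists, and because $M'$ realizes every type its value is $\sup_{q\in S_n(T)}[\hat\psi(q)-\lambda_\psi\hat P(q)]$, which is at least $\sup_{q\in Z(\hat P)}\hat\psi(q)$ as $\hat P\equiv 0$ there. Hence $\hat\psi\leqslant r$ on $Z(\hat P)$, and applying the same to $-\psi$ yields $\hat\psi\equiv p(\psi)$ on $Z(\hat P)$. As $\psi$ was arbitrary, every $q\in Z(\hat P)$ agrees with $p$ on all formulas, so $Z(\hat P)=\{p\}$.

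Granting $Z(\hat P)=\{p\}$, the conclusion is immediate: for $\a\in N$ one has $\a\in Z(P^N)$ iff $\hat P(tp(\a))=0$ iff $tp(\a)\in Z(\hat P)=\{p\}$ iff $\a\vDash p$, so $p(N)=D$ is nonempty and definable. I expect the one genuinely delicate point to be the transfer in the middle paragraph: the value of $\sup_{\x}[\psi-\lambda_\psi P]$ must be read off as a supremum over \emph{all} of $S_n(T)$, which is why I would pass to an honestly $\aleph_0$-saturated extension realizing every type rather than argue inside $M$ or inside a merely extremally saturated model. Nonemptiness of $p(N)$ is in any case already guaranteed by Proposition~\ref{dfnconditions}, consistent with $p$ being the unique point of the face $Z(\hat P)$, hence an extreme type realized in $N$.
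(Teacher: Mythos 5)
Your proof is correct and follows essentially the paper's own route: encode $d(\x,p(M))$ as a definable predicate, transfer conditions (i)--(iii) of Remark~\ref{3conditions} to $N$, apply Proposition~\ref{dfnconditions} to get a nonempty definable zeroset, and identify that zeroset with $p(N)$ via the Lipschitz estimate $\hat\psi\leqslant p(\psi)+\lambda_\psi\hat P$ (applied to $\pm\psi$), which is exactly the inequality $\phi^N\leqslant\lambda_\phi P^N$ the paper uses. The only difference is organizational: by appealing to completeness of the definitional expansion you apply Proposition~\ref{dfnconditions} directly to the extremally $\aleph_0$-saturated $N$, whereas the paper first treats an $\aleph_0$-saturated elementary extension of $M$ and then descends to $N$ through a common extension and Proposition~\ref{dfnrestriction}; both are valid.
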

\begin{proof} First, suppose that $M\preccurlyeq N$ where $N$ is $\aleph_0$-saturated.
Let $P(\x)=d(\x,p(M))$. So, $(M,P)\preccurlyeq (N,P^N)$
and $P^N$ satisfies the conditions (i)-(iii) of Remark \ref{3conditions}.
Hence $P^N(\x)=d(\x,D)$ where $D$ is the zeroset of $P^N$. We show that $D=p(N)$.

Take a condition $\phi(\x)\leqslant0$ in $p(\x)$.
For each $\a\in M$ and $\b\in p(M)$ we have that
$$\phi^M(\a)\leqslant\phi^M(\a)-\phi^M(\b)\leqslant\lambda_{\phi}\ d(\a,\b).$$
So, $$\phi^M(\a)\leqslant\lambda_{\phi}\ d(\a,p(M))
=\lambda_{\phi}\ P(\a) \ \ \ \ \ \ \ \forall \a\in M$$
and hence $$\phi^N(\a)\leqslant\lambda_{\phi}P^N(\a)\ \ \ \ \ \ \ \forall \a\in N.$$
In particular, $\phi^N(\a)\leqslant0$ for each $\a$ with $P^N(\a)=0$.
We conclude that $D\subseteq p(N)$ .
For the reverse inclusion, assume $M\vDash |\phi_k(\x)-P(\x)|\leqslant\frac{1}{k}$ for each $k$.
Since $p(M)$ is nonempty, $-\frac{1}{k}\leqslant\phi_k(\x)\leqslant\frac{1}{k}$ must belong to $p(\x)$.
Therefore, for any $k$ and $\b\in p(N)$ we have that
$$0\leqslant P^N(\b)\leqslant|P^N(\b)-\phi_k^N(\b)|+|\phi_k^N(\b)|\leqslant\frac{2}{k}.$$
This shows that $P^N(\b)=0$ for each $\b\in p(N)$.

Now assume $N$ is extremally $\aleph_0$-saturated and take an $\aleph_0$-saturated $K$ such that $M\preccurlyeq K$ and $N\preccurlyeq K$.
Then, $p(K)$ is definable. So, $p(N)=p(K)\cap N^n$ is definable by Proposition \ref{dfnrestriction}.
\end{proof}

A type $p(\x)$ is called \emph{principal} if $p(M)$ is nonempty definable for some $M\vDash T$.
Every principal type $p$ is extreme since it is exposed by $d(\x,p(M))$.
A consequence of Proposition \ref{principal type} is that if $p(\x,\y)$ is principal then so is $q(\x)=p|_{\x}$.
In fact, $q(M)$ is the projection on $p(M)$ on $M^n$ if $M$ is $\aleph_0$-saturated and $|\x|=n$.

As stated before, if $P$ is a definable predicate we may treat it as a formula in the
definitional expansion of $T$ to $L\cup\{P\}$.
Also, every type $p:\mathbb D_n\rightarrow\Rn$ has a natural extension to $\mathbf D_n$
so that $p(Q)=\lim p(\phi_k)$ whenever $\phi_k\stackrel{u}\longrightarrow Q$.
In particular, $p(\x)\vDash P(\x)\leqslant0$ if and only if for each $\a\in M\vDash T$,\
$\a\vDash p$ implies that $P(\a)\leqslant0$.
The following proposition states that $p$ is principal if and only if logic and metric topologies coincide at $p$.

\begin{proposition} \label{princip}
Let $p\in S_n(T)$. Then the following are equivalent:

(i) $p$ is principal

(ii) For each $k$ there is a definable predicate $P_k(\x)$ such that
$$T\vDash0\leqslant P_k(\x),\ \ \ \ \ \ p(\x)\vDash P_k(\x)\leqslant0\ \ \ \ \textrm{and}
\ \ \ \ \ [P_k<1]\subseteq B(p,\frac{1}{k}).$$
\end{proposition}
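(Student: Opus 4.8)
The plan is to pivot on the identity $\mathbf d(p,q)=q\big(d(\x,p(M))\big)$, valid in any $\aleph_0$-saturated model $M$ once $p(M)$ is regarded as a subset of $M^n$. To see it, fix $\b\vDash q$; then $\inf_{\a\vDash p}d(\a,\b)=d(\b,p(M))$, and taking the infimum over $\b\vDash q$ recovers $\mathbf d(p,q)$ on the left. When $p(M)$ is definable, the quantity $d(\b,p(M))=q\big(d(\x,p(M))\big)$ depends only on $q$, so the infimum over $\b$ is already achieved and $\mathbf d(p,q)=q\big(d(\x,p(M))\big)$; that is, $\mathbf d(p,\cdot)$ is literally the evaluation of the definable predicate $d(\x,p(M))$.

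For $(i)\Rightarrow(ii)$ I would work in an $\aleph_0$-saturated $M$ (which is in particular extremally $\aleph_0$-saturated), so that $P:=d(\x,p(M))$ is a definable predicate by Proposition \ref{principal type}, with $0\leqslant P\leqslant 1$ and $p(P)=0$. Set $P_k:=kP$. Then $T\vDash 0\leqslant P_k$ is immediate, $p\vDash P_k\leqslant0$ follows from $p(P)=0$, and for the last clause the identity above gives $q(P_k)<1\Leftrightarrow q(P)<\tfrac1k\Leftrightarrow\mathbf d(p,q)<\tfrac1k$, so that $[P_k<1]=B(p,\tfrac1k)$. Thus (ii) holds, in fact with equality in the last inclusion.

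For $(ii)\Rightarrow(i)$ I would take $M$ to be both $\aleph_0$-saturated and strongly $\aleph_0$-homogeneous (available as noted in Section \ref{section2}), and put $D:=p(M)$, which is nonempty since all types are realized and closed as the common solution set of the conditions in $p$. The goal is to verify clause (iii) of Proposition \ref{dfn3conditions} for $D$ using the given $P_k$ directly. From $0\leqslant P_k$ and $p\vDash P_k\leqslant0$ we get $p(P_k)=0$, hence $P_k(\x)\leqslant0$ on $D$. Homogeneity makes $d(\a,D)$ depend only on $tp(\a)$ and equal to $\mathbf d(p,tp(\a))$. I would then argue by cases at each $\a\in M^n$: if $P_k(\a)\geqslant1$ then $d(\a,D)\leqslant1\leqslant P_k(\a)+\tfrac1k$ since the metric is bounded by $1$; if $P_k(\a)<1$ then $tp(\a)\in[P_k<1]\subseteq B(p,\tfrac1k)$, whence $d(\a,D)=\mathbf d(p,tp(\a))<\tfrac1k\leqslant P_k(\a)+\tfrac1k$. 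Either way $d(\x,D)\leqslant P_k(\x)+\tfrac1k$ on all of $M^n$, so Proposition \ref{dfn3conditions} yields that $D=p(M)$ is definable and $p$ is principal.

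The delicate point is the passage from a statement about types, namely $[P_k<1]\subseteq B(p,\tfrac1k)$, to the pointwise estimate on $M^n$ demanded by Proposition \ref{dfn3conditions}: in a non-homogeneous model the inclusion only controls $\inf_{\b\vDash q}d(\b,D)=\mathbf d(p,q)$, which is a priori strictly smaller than $d(\a,D)$ for a particular realization $\a\vDash q$. Passing to a strongly $\aleph_0$-homogeneous model is exactly what collapses this gap, since then all realizations of a fixed type lie at the same distance from the automorphism-invariant set $D$. The remaining ingredients, the evaluation identity and the two-case bound, are routine once the metric bound $d\leqslant1$ and the scaling $P_k=kP$ are in place.
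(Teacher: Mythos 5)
Your proof is correct and follows essentially the same route as the paper's: $(i)\Rightarrow(ii)$ via $P_k=k\,d(\x,p(M))$ in an $\aleph_0$-saturated model, and $(ii)\Rightarrow(i)$ via the same two-case estimate ($P_k(\a)<1$ versus $1\leqslant P_k(\a)$) feeding into clause (iii) of Proposition \ref{dfn3conditions}. The one point of divergence is the step you rightly flag as delicate: the paper bridges from $\mathbf{d}(tp(\a),p)<\frac{1}{k}$ to $d(\a,p(M))<\frac{1}{k}$ by $\aleph_0$-saturation alone (realizing over the given $\a$ the $2n$-type of a close pair of realizations of $tp(\a)$ and $p$), whereas you invoke strong $\aleph_0$-homogeneity, which also works but quietly requires a model that is saturated and strongly homogeneous simultaneously, a fact the paper never states in combined form.
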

\begin{proof} Let $M$ be $\aleph_0$-saturated.

(i)$\Rightarrow$(ii): The requirement holds with $P_k(\x)=kd(\x,p(M))$.

(ii)$\Rightarrow$(i):
For each $\a\in M$, we have either $P_k(\a)<1$ or $1\leqslant P_k(\a)$.
In the first case we have that $d(tp(\a),p)<\frac{1}{k}$.
So, by saturation $$d(\a,p(M))<\frac{1}{k}\leqslant P_k(\a)+\frac{1}{k}.$$
In the second case, $d(\a,p(M))\leqslant1\leqslant P_k(\a)$.
So, for any $\a\in M$, $d(\a,p(M))\leqslant P_k(\a)+\frac{1}{k}$.
We conclude by part (iii) of Proposition \ref{dfn3conditions} that $p(M)$ is definable.
\end{proof}

\section{Definability in first order models}
Part (ii) of Proposition \ref{dfn3conditions} helps us to give an easy description of definable sets in case $M\vDash T$ is first order.
Let $P:M^n\rightarrow\Rn^+$ be definable and assume $$\inf\{P(\a):\ \ \a\in M,\ 0<P(\a)\}=r>0.$$
Then $$d(\x,Z(P))\leqslant\frac{1}{r}P(\x)\ \ \ \ \ \ \ \forall\x\in M.$$
Therefore, $Z(P)$ is definable in $M$.
%In particular, every formula $\phi$ with $0\leqslant\phi^M$ satisfies $(*)$ above. So, $Z(\phi^M)$ is definable.
In particular, if $C,D\subseteq M^n$ are definable, then so are $C\cap D$ and $M^n-C$ since they are zerosets of
$d(\x,C)+d(\x,D)$ and $1-d(\x,C)$ respectively. Also, if $D\subseteq M^{n+1}$ is definable,
its projection $C$ on $M^n$ is definable since $d(\x,C)=\inf_y d(\x y,D)$.
Finally, if $C,D$ are definable, then $C\times D$ is definable as the zeroset of $d(\x,C)+d(\y,D)$.
We conclude by the prenex normal form theorem in first order logic that every first order definable set in $M$
is AL-definable.
In fact, if $D\subseteq M^n$ is first order defined by say $\theta$, then
$d(\x,D)$ is the interpretation of an AL-predicate which depends only on $\theta$.
%To summarize, $D\subseteq M^n$ is first order definable if and only if it is AL-definable if and only if
%it is the zeroset of $\phi^M(\x)$ for some formula $\phi$ with $0\leqslant\phi^M$.

\begin{proposition} \label{FO-types} Assume $M, N\vDash T$ are first order. Then

(i) Every first order definable set in $M$ is AL-definable.

(ii) $\a,\b\in M$ have the same first order type iff they have the same affine type.

(iii) Every $\a\in M$ has an extreme type (and an exposed one if $L$ is countable).

(iv) If $M\preccurlyeq_{\emph{AL}}N$, then $M\preccurlyeq_{\emph{FO}}N$.

(v) $M\equiv_{\emph{FO}}N$.
\end{proposition}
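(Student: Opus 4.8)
The plan is to route all five parts through a single structural fact about how affine formulas behave in a first order model, and to prove them in the order (i), (ii), (iii), (v), (iv). The fact is: \emph{in any first order $M'\vDash T$, every affine formula $\phi(\x)$ takes only finitely many values, and for each value $v$ the set $\{\x:\ \phi^{M'}(\x)=v\}$ is first order definable by a formula depending only on $\phi$}. I would prove this by induction on $\phi$: the atomic cases $R(t_1,\dots,t_n)$ and $d(t_1,t_2)$ are $\{0,1\}$-valued; $r\cdot\phi$ and $\phi+\psi$ combine two finite value-sets on the corresponding first order definable pieces; and $\inf_x,\sup_x$ replace a finite first order partition by the finite value-set of its projections, which are again first order definable. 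This induction is routine. Together with the discussion preceding the statement it gives the device used everywhere below: for a first order defined $D$, the distance $d(\x,D)$ is the interpretation of a \emph{fixed} AL-predicate $\delta_D$ depending only on the defining formula, so $\delta_D$ is uniform across all models of $T$.

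For (i) I would assemble the remarks already made: a nonnegative definable $P$ on a first order model has a positive gap above $0$, so $Z(P)$ is definable by Proposition \ref{dfn3conditions}; hence $C\cap D$, $M^n-C$, products and projections of definable sets are definable, and the prenex normal form theorem converts any first order formula into these operations. For (ii), if $\a,\b$ share an affine type then $d(\a,D)=d(\b,D)$ for every first order definable $D$ by (i); as these distances are $\{0,1\}$-valued we get $\a\in D\Leftrightarrow\b\in D$, so the first order types coincide. Conversely the structural fact says $\phi^M(\a)$ is determined by which first order definable sets contain $\a$, i.e. by the first order type, so equal first order types force equal affine types.

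The heart is (iii). Put $p=tp(\a)$ and $\pi=tp_{\mathrm{FO}}(\a)$. For extremality, suppose $p=\gamma q+(1-\gamma)r$ with $0<\gamma<1$, and for first order definable $D$ set $\chi_D=1-\delta_D\in[0,1]$; then $p(\chi_D)=\chi_D^M(\a)\in\{0,1\}$, and since $q(\chi_D),r(\chi_D)\in[0,1]$ the extreme values force $q(\chi_D)=r(\chi_D)=p(\chi_D)$. Fix an affine $\phi$ with value $v$ on the class $D_0$ of $\a$. The Lipschitz estimate $v-\lambda_\phi\,\delta_{D_0}(\x)\leqslant\phi(\x)\leqslant v+\lambda_\phi\,\delta_{D_0}(\x)$ holds in $M$; being the pair of affine conditions $\sup_\x[\pm(\phi-v)-\lambda_\phi\delta_{D_0}]\leqslant0$, it holds in \emph{every} model of $T$ by completeness. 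Realizing $q$ by $\b$ gives $\delta_{D_0}(\b)=q(\delta_{D_0})=0$, so $q(\phi)=\phi(\b)=v=p(\phi)$, and likewise for $r$; hence $q=r=p$ and $p$ is extreme. When $L$ is countable the first order $n$-type space is a metrizable Stone space, so $\{\pi\}$ is a decreasing intersection of clopens $[\theta_k]$; then $P=\sum_k2^{-k}\chi_{D_{\theta_k}}$ is definable with $\hat P(p)=1=\max\hat P$. If $\hat P(s)=1$ then $s(\chi_{D_{\theta_k}})=1$ for all $k$, and for any $\phi$ with value $v$ on the $\a$-class $E_0$ compactness yields $[\theta_k]\subseteq[E_0]$ for some $k$, whence $\delta_{E_0}\leqslant\delta_{D_{\theta_k}}$ in all models and $\delta_{E_0}(\c)=0$ for $\c\vDash s$; the same sandwich gives $s(\phi)=v=p(\phi)$, so $\hat P$ exposes $p$. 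The main obstacle throughout is precisely this passage from ``holds in the first order $M$'' to ``holds in every, possibly non first order, model of $T$'', which I handle uniformly via completeness of $T$ and the model-independence of the predicates $\delta_D$.

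Finally (v) and (iv) fall out of the same mechanism. For a first order sentence $\sigma$ the canonical predicate $\delta_\sigma$ (the degenerate case of (i)) is an affine sentence with $\delta_\sigma^{M'}=0$ iff $M'\vDash\sigma$; completeness of $T$ fixes its value across all first order models, so $M\vDash\sigma\Leftrightarrow N\vDash\sigma$, i.e. $M\equiv_{\mathrm{FO}}N$. For (iv), given $M\preccurlyeq_{\mathrm{AL}}N$ and a first order $\theta(\x)$, AL-elementarity preserves the value of the fixed predicate $\delta_\theta$ on tuples from $M$; since in both first order models $\delta_\theta(\a)=0$ exactly when $\theta(\a)$ holds, we obtain $M\vDash\theta(\a)\Leftrightarrow N\vDash\theta(\a)$ for every $\a\in M$, which is exactly $M\preccurlyeq_{\mathrm{FO}}N$.
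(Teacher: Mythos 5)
Your proposal is correct, and for parts (i)--(iv) it travels essentially the same road as the paper: (i) is the paper's own preceding discussion; (ii) is the same observation that the conditions $d(\x,\theta(M))=0$ capture the first order type (your ``structural fact'' supplies the converse direction, which the paper leaves implicit); your (iii) simply unwinds the paper's appeal to Remark~\ref{facial} into the explicit barycentric computation ($p(\chi_D)\in\{0,1\}$ forces $q(\chi_D)=r(\chi_D)=p(\chi_D)$, then the Lipschitz sandwich pins down $q(\phi)$), and the exposing predicate $\sum_k 2^{-k}$ of the level-set indicators is the paper's; your (iv) replaces the paper's Tarski-test argument by the equivalent observation that $\delta_\theta^N|_M=\delta_\theta^M$ under AL-elementarity. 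The genuine divergence is (v). The paper proves it by taking a countably incomplete $\kappa$-good ultrafilter, invoking extremal $\kappa$-saturation of $N^{\mathcal F}$ (Proposition~\ref{sat1}), running a back-and-forth (which needs (iii) together with preservation of extremality under naming parameters) to get an AL-elementary embedding $M\rightarrow N^{\mathcal F}$, and then applying (iv) and \L o\'s. You instead observe that each first order sentence $\sigma$ has an attached affine sentence $\delta_\sigma$ whose value is frozen by the completeness of $T$ and which detects $\sigma$ in \emph{any} first order model; this is shorter and avoids all the saturation machinery, but it leans entirely on the model-uniformity of the construction behind (i) --- that the same AL-predicate computes $d(\x,\theta(\cdot))$ in \emph{every} first order model of $T$, not just the one in which it was built. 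The paper does assert exactly this (``an AL-predicate which depends only on $\theta$''), and your induction on affine formulas is the right way to justify it, so I regard your (v) as a legitimate and more elementary alternative. Two small points to watch: the ``canonical'' predicate produced by the $\inf_{\bar y}[d(\x,\bar y)+\frac{1}{r}P(\bar y)]$ construction can return $\min$-truncated distances on tuples of length $>1$, so if you use it in the Lipschitz sandwich the constant $\lambda_\phi$ may need to be inflated to a bound depending on the arity; and in the exposedness argument you should say explicitly that $\hat P$ for a definable predicate $P$ is admissible as an exposing functional (the paper adopts exactly this convention in Section~9, so this is consistent with its usage).
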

\begin{proof}
(i) Explained above. (ii) The first order type of $\a\in M$ is determined by the affine conditions of the form
$d(\x,\theta(M))=0$ where $\theta$ is in the first order type of $\a$.
(iii) By Remark \ref{facial}, any satisfiable set of conditions of the form $d(\x,\theta(M))=0$ is face.
So, $tp(\a)$ is extreme. If $L$ is countable, $tp(\a)$ is exposed by the predicate $\sum_{k=0}^n 2^{-k}d(\x,\theta_k(M))$
where $\theta_k$ is an enumeration of first order formulas in $L$ satisfied by $\a$.
(iv) We may assume without loss that every $a\in M$ is named by a constant symbol $c_a\in L$.
Let $\theta(y)$ be a first order formula and assume $N\vDash\exists y\theta(y)$.
Then, $\theta(N)$ and hence $\theta(N)\cap M$ is nonempty (as the zeroset $d(y,\theta(N))$ in $M$).
We conclude by Tarski's test that $M\preccurlyeq_{\emph{FO}}N$.
(v) Let $\mathcal F$ be a countably incomplete $\kappa$-good ultrafilter where $|L|+\aleph_0+|M|<\kappa$.
An easy back and forth argument using Proposition \ref{sat1} shows that
there exists an AL-elementary embedding $f:M\rightarrow N^{\mathcal F}$.
We conclude that $M\equiv_{\emph{FO}}N^{\mathcal F}\equiv_{\emph{FO}}N$.
\end{proof}

In particular, for any first order $M$, $N$, $M\equiv_{\emph{AL}}N$ implies that $M\equiv_{\emph{FO}}N$.
A model $M\vDash T$ is called \emph{extremal} if $tp(\a)$ is extreme for each $\a\in M$.
%We mention a result from \cite{extreme}.
%\begin{theorem} \label{strong omitting}
%Every complete theory $T$ has an extremal model. Extremal models of $T$ form a CL-complete theory.\end{theorem}
It is not hard to see that if $M$ is extremal, then so is $(M,a)_{a\in A}$ for each $A\subseteq M$ (or see \cite{extreme} for the proof).
%if $tp(\a\b)$ is extreme, then so is $tp(\a/\b)$. This is also true for infinite $\a$ and $\b$.

\begin{theorem}
If $T$ has a first order model, its first order models form a complete first order theory.
These models are exactly the extremal models of $T$.
\end{theorem}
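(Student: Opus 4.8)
The plan is to fix a first order model $M_0\vDash T$ and, since $T$ is complete, to work with $T=Th(M_0)$ and the ordinary complete first order theory $T^{*}=Th_{\mathrm{FO}}(M_0)$. Three things must be checked: (A) a first order structure is a model of $T$ if and only if it is a model of the complete theory $T^{*}$; (B) every first order model of $T$ is extremal; (C) every extremal model of $T$ is first order.

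For (A), Proposition \ref{FO-types}(v) shows that any two first order models of $T$ are first order elementarily equivalent, so each satisfies $T^{*}$, which is complete as the first order theory of a structure. For the converse I would show that on first order structures first order equivalence already fixes the value of every affine sentence. Given a first order $N\vDash T^{*}$, so $N\equiv_{\mathrm{FO}}M_0$, the Keisler--Shelah theorem yields an index set and an ultrafilter $\mathcal F$ with $N^{\mathcal F}\cong M_0^{\mathcal F}$ as ordinary structures; since for an ultrafilter the ultramean construction is the usual ultraproduct, the ultramean theorem gives $\sigma^{N}=\sigma^{N^{\mathcal F}}=\sigma^{M_0^{\mathcal F}}=\sigma^{M_0}$ for every affine sentence $\sigma$ (the outer equalities because a sentence is evaluated by the ultralimit of a constant sequence). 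Hence $N\vDash Th(M_0)=T$, and the first order models of $T$ are exactly the models of $T^{*}$.

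Claim (B) is immediate from Proposition \ref{FO-types}(iii): in a first order model every tuple has an extreme type, so such a model is extremal. The real content is (C), and the place I expect the main difficulty, is to show that \emph{every extreme type of $T$ is realized in a first order structure}. To get this cheaply I would apply Proposition \ref{sat1} to the constant family $M_i=M_0$, using a countably incomplete $\kappa$-good ultrafilter $\mathcal F$ with $|L|+\aleph_0<\kappa$ for suitably large $\kappa$: the ultrapower $M_0^{\mathcal F}=\prod_{\mathcal F}M_0$ is then extremally $\kappa$-saturated. Because $\mathcal F$ is an ultrafilter the construction is the classical ultrapower, and an ultralimit of the two-valued predicates of $M_0$ is again two-valued, so $M_0^{\mathcal F}$ is itself a first order structure (and $M_0^{\mathcal F}\equiv_{\mathrm{AL}}M_0$, whence $Th(M_0^{\mathcal F})=T$). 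Extremal saturation then realizes every extreme $p\in S_n(T)$ by a tuple of $M_0^{\mathcal F}$, forcing $p(R(\x))\in\{0,1\}$ for each atomic $R$ and $p(d(x_i,x_j))\in\{0,1\}$.

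Finally I would assemble (C): if $M$ is extremal then for every $\a\in M$ the type $tp(\a)$ is extreme, hence by the previous paragraph a first order type, so $R^{M}(\a)=tp(\a)(R)\in\{0,1\}$ for every atomic $R$ and $d^{M}(a,b)\in\{0,1\}$. Thus the metric and all basic predicates of $M$ are two-valued, i.e. $M$ is a first order structure, and being a model of $T$ it is a first order model of $T$. Together with (A) and (B) this shows that the first order models of $T$ are precisely the extremal ones and are exactly the models of the complete first order theory $T^{*}$. As a by-product one records that the extreme points of $S_n(T)$ are exactly the first order types, the first step toward the Bauer simplex description. The only genuinely nontrivial point is the identification of extreme types with first order types, and the observation that the extremally saturated ultrapower of $M_0$ stays first order is what makes it painless.
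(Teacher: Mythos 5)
Your proof is correct, and for the main implication --- every extremal model of $T$ is first order --- it takes a genuinely different route from the paper. The paper fixes a first order $M\vDash T$, forms the extremally $\kappa$-saturated ultrapower $M^{\mathcal F}$ (still a two-valued, discrete structure), and runs a back-and-forth to elementarily embed an arbitrary extremal $N\vDash T$ with $|N|<\kappa$ into $M^{\mathcal F}$; this back-and-forth needs the auxiliary fact, quoted just before the theorem from an external reference, that extremality is preserved when parameters are named, and it delivers the stronger conclusion that every extremal model embeds elementarily into an ultrapower of $M$ (a fact the paper reuses, e.g.\ in the proof of Theorem \ref{minimal sets}). You instead argue locally: extremal saturation of the good ultrapower $M_0^{\mathcal F}$ forces every extreme type over $\emptyset$ to be realized in a two-valued structure, hence to assign values in $\{0,1\}$ to every atomic formula, and then any extremal model has two-valued predicates and a discrete (hence complete) metric outright. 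This bypasses both the back-and-forth and the preservation-of-extremality lemma, at the cost of not producing the elementary embedding. On the first assertion of the theorem you are also more explicit than the paper: where the paper appeals to closure of the class under ultraproducts and first order elementary equivalence, you identify it directly as $\mathrm{Mod}(Th_{\mathrm{FO}}(M_0))$ via Keisler--Shelah together with the ultramean theorem, and that argument is in fact the missing justification for the paper's unproved claim that the class is closed under $\equiv_{\mathrm{FO}}$. Both approaches rest on the same two pillars, Proposition \ref{sat1} and parts (iii) and (v) of Proposition \ref{FO-types}, so the difference is in how the saturation of the ultrapower is exploited, not in the underlying machinery.
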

\begin{proof}
Let $M\vDash T$ be first order and $\mathcal F$ be a countably incomplete $\kappa$-good ultrafilter where $|L|+\aleph_0<\kappa$.
A back and forth argument shows that every extremal $N\vDash T$ with $|N|<\kappa$ is elementarily embedded in $M^{\mathcal F}$.
Since $\kappa$ can be taken to be arbitrarily large, every extremal model of $T$ is first order.
Therefore, regarding part (iii) of Proposition \ref{FO-types}, first order models of $T$ are exactly its extremal models.
Moreover, these models form a first order theory since they are closed under ultraproduct and first order elementary equivalence.
Also, by part (v) of Proposition \ref{FO-types}, this theory is first order complete.
\end{proof}

Let $T_e$ denote the extremal theory of $T$ and $\mathbb{S}_n(T_e)$ be the set of its first order $n$-types.
Then the restriction map $\mathbb{S}_n(T_e)\rightarrow S_n(T)$ is continuous and injective by part (ii) of Proposition \ref{FO-types}.
So its range $E_n(T)$ consisting of extreme types of $T$ is closed.
Therefore, they are homeomorphic and we may write $\mathbb{S}_n(T_e)=E_n(T)$.

We recall some notions from Choquet theory (see \cite{Alfsen} for more details).
Let $K$ be a compact convex set in a locally convex space and $\mu$ be a Baire probability measure on $K$.
The \emph{barycenter} of $\mu$ is the unique $x\in K$ such that
$$\ \ h(x)=\int h\ d\mu\ \ \ \ \ \ \ \ \ \forall h\in \mathbf{A}(K)$$
where $\mathbf{A}(K)$ is the set of affine continuous real valued functions on $K$.
In this case, one says that $\mu$ represents $x$. Every Baire probability measure on $K$ has a barycenter.
Also, by the Choquet-Bishop-de Leeuw theorem, every $x\in K$ is represented by a boundary probability measure,
i.e. a Baire probability measure $\mu$ such that $\mu(X)=0$ for every Baire set $X$ disjoint from $Ext(K)$.
If every point has a unique representation by a boundary probability measure, $K$ is called a \emph{Choquet simplex}.
If (furthermore) the boundary is closed, it is called a \emph{Bauer simplex}.
It is well known that every Baire probability measure on a compact Hausdorff space $K$ is regular
and it uniquely extends to regular Borel probability measure on $K$.
If $K=S_n(T)$, the extreme boundary is closed and we may replace Baire measures with the corresponding regular Borel extension.
Let $p_\mu$ be the barycenter of the regular Borel probability measure $\mu$ on $E_n(T)$.

\begin{theorem} \label{Bauer}
If $T$ has a first order model, then $S_n(T)$ is a Bauer simplex.
\end{theorem}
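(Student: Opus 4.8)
The plan is to realize $S_n(T)$ as the space of probability measures on its extreme boundary, which is the standard route to a Bauer simplex. Recall that before the statement we identified $E_n(T)=Ext(S_n(T))=\mathbb{S}_n(T_e)$ and showed it is closed, hence compact. Thus it remains only to prove that $S_n(T)$ is a Choquet simplex, i.e. that every type has a \emph{unique} representing measure carried by $E_n(T)$. I would reduce this, via the standard criterion of Choquet theory (\cite{Alfsen}), to showing that the restriction map $r:\mathbf{A}(S_n(T))\to C(E_n(T))$, $r(\xi)=\xi|_{E_n(T)}$, is a surjective isometry. Surjectivity of $r$ forces uniqueness of boundary measures: if $\mu,\nu$ are boundary measures representing the same $p$, then for every $\xi\in\mathbf{A}(S_n(T))$ one has $\int r(\xi)\,d\mu=p(\xi)=\int r(\xi)\,d\nu$, hence $\int f\,d\mu=\int f\,d\nu$ for all $f\in C(E_n(T))$, whence $\mu=\nu$ by the Riesz representation theorem. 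Existence of a boundary representing measure is already granted by the Choquet--Bishop--de Leeuw theorem quoted above.

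That $r$ is isometric is immediate from the Bauer maximum principle: since each $\xi\in\mathbf{A}(S_n(T))$ is affine and logic-continuous, both $\xi$ and $-\xi$ are convex and continuous, so $\sup|\xi|$ is attained on $Ext(S_n(T))=E_n(T)$, giving $\|\xi|_{E_n(T)}\|=\|\xi\|$. Consequently the image $r(\mathbf{A}(S_n(T)))$ is a complete, hence closed, subspace of $C(E_n(T))$, and it therefore suffices to show this image is dense.

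For density I would exploit the first order structure of the boundary. For each satisfiable first order formula $\theta(\x)$ the set $\theta(M)$ is AL-definable by part (i) of Proposition \ref{FO-types}, so $P_\theta(\x)=d(\x,\theta(M))$ is a definable predicate and $\hat{P}_\theta\in\mathbf{A}(S_n(T))$. If $q\in E_n(T)=\mathbb{S}_n(T_e)$ is a first order type, then in a first order model the metric is $\{0,1\}$-valued, so $\hat{P}_\theta(q)=0$ when $\theta\in q$ and $\hat{P}_\theta(q)=1$ when $\theta\notin q$. Hence $r(1-\hat{P}_\theta)=\chi_{[\theta]}$, the indicator of the basic clopen set $[\theta]=\{q:\theta\in q\}$. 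Since $E_n(T)$ carries the Stone topology of the classical theory $T_e$, these clopen sets form a basis, so their indicators span the dense subspace of locally constant functions in $C(E_n(T))$. Thus $r$ has dense range and, being a closed isometry, is onto.

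Combining these, $r$ is a surjective isometry onto $C(E_n(T))$, so $S_n(T)$ is a simplex with closed extreme boundary, i.e. a Bauer simplex; equivalently the barycenter map $\mu\mapsto p_\mu$ is an affine homeomorphism of the space of regular Borel probability measures on $E_n(T)=\mathbb{S}_n(T_e)$ onto $S_n(T)$. I expect the only delicate point to be the density step: everything rests on the clopen sets $[\theta]$ exhausting a basis of the boundary together with their indicators lying in the range of $r$, which is exactly where parts (i)--(ii) of Proposition \ref{FO-types} and the identification $\mathbb{S}_n(T_e)=E_n(T)$ are needed.
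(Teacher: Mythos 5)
Your argument is correct in substance and shares its key idea with the paper's proof, but it packages that idea differently. The common core is the observation that for a first order formula $\theta$ the affine function $1-\hat P_\theta$, with $P_\theta(\x)=d(\x,\theta(M))$, restricts on the boundary $E_n(T)=\mathbb{S}_n(T_e)$ to the indicator of the basic clopen set $[\theta]$. The paper uses this minimally: given two boundary measures representing the same type, it integrates these functions to conclude that the measures agree on clopen sets, and then finishes by regularity. You instead upgrade the observation to the statement that the restriction map $r:\mathbf{A}(S_n(T))\rightarrow C(E_n(T))$ is a surjective isometry (Bauer maximum principle for the isometry, Stone--Weierstrass on the totally disconnected space $\mathbb{S}_n(T_e)$ for density, completeness for closed range), and then derive uniqueness from the Riesz representation theorem. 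Your route is heavier, but it buys the sharper conclusion $\mathbf{D}_n(T)\cong\mathbf{A}(S_n(T))\cong C(\mathbb{S}_n(T_e))$, which is itself the standard characterization of Bauer simplices; the paper's route is shorter because it never needs surjectivity, only that the family $\{1-\hat P_\theta\}$ separates boundary measures.

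One shared subtlety you should be aware of (it affects the paper's proof equally): the metric on $M^n$ is $\sum_i d(x_i,y_i)$, so in a first order model $d(\a,\theta(M))$ takes values in $\{0,1,\dots,n\}$, not $\{0,1\}$; hence for $n>1$ the function $1-\hat P_\theta$ need not equal $\chi_{[\theta]}$ off $[\theta]$. This is easily repaired: setting $D_k=\{\x:\ d(\x,\theta(M))\leqslant k\}$, each $D_k$ is first order definable and $d(\x,D_k)=\max(0,d(\x,\theta(M))-k)$, so $\chi_{[\theta]}=r(1-\hat P_\theta+\widehat{P'_\theta})$ where $P'_\theta(\x)=d(\x,D_1)$. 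With this adjustment your density step, and the paper's separation step, both go through.
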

\begin{proof}
We use the equality $\mathbb{S}_n(T_e)=E_n(T)$. We have only to show that $\mu\mapsto p_\mu$ is injective.
Assume $p$ has two representations by (regular) boundary measures $\mu$ and $\nu$, i.e.
$$h(p)=\int h\ d\mu=\int h\ d\nu\ \ \ \ \ \ \ \ \ \forall h\in\mathbf{A}(S_n(T)).$$
For first order definable $D\subseteq M^n$ let $Q(\x)=1-d(\x,D)$ and $X=\{q\in E_n(T):D\in q\}$.
Then, one has that $$\mu(X)=\int\hat Q\ d\mu=\int\hat Q\ d\nu=\nu(X)$$
where $\hat Q(q)=q(Q)$ for each type $q$. So, $\mu$ and $\nu$ coincide on clopen subsets of $E_n(T)$.
Since $\mu$ and $\nu$ are regular, we have that $\mu=\nu$.
\end{proof}

Let $\mathbb T$ be a complete first order theory.
A Keisler measure for $\mathbb T$ is a finitely additive probability measure on the Boolean algebra of
first order definable subsets $M^n$ where $M\vDash\mathbb{T}$.
Let $T$ be the affine part of $\mathbb T$, i.e. the reduction of $\mathbb T$ to AL-conditions.
Then, $T_e=\mathbb T$ and every Keisler measure corresponds to a unique regular Borel probability measure on $\mathbb{S}_n(\mathbb{T})=E_n(T)$.
So, by the above proposition, affine types are uniquely represented by integration over Keisler measures.
In particular, there is a one-to-one correspondence between Keisler measures of $\mathbb T$ and affine types of $T$.
One can say that Keisler measures of $\mathbb T$ (as generalized first order types) are realized in the models its affine part.
Also, a concrete representation of Keisler measures can be given in the case $\mathbb{S}_n(\mathbb T)$ is countable.
Assume $\{p_i: i\in I\}$ is an enumeration (without repetition) of $n$-types of $\mathbb T$ where $I\subseteq\Nn$.
Then, for $M\vDash T_e$, every Keisler measure on $M^n$ has a unique representation of the form $\sum_{i\in I}r_i\mu_i$
where $r_i\geqslant0$, $\sum_{i\in I}r_i=1$ and $\mu_i$ is the Dirac measure at $p_i$.

\section{Definability in compact models} \label{section4}
In this section we assume $T$ has a compact model.
In the framework of CL, if $M$ is compact, zerosets of $\emptyset$-definable predicates are definable and
if the language is countable, they are the only type-definable sets.
The situation is different in AL.
In a compact model, a type-definable set need not be a zeroset and a zeroset need not be definable.
Moreover, definable sets are exactly the end-sets of definable predicates.
By an \emph{end-set} we mean a set of the form $\{\x:\ P(\x)=r\}$
where $r$ is either $\inf_{\x}P(\x)$ or $\sup_{\x}P(\x)$.

\begin{theorem} \label{minimal sets}
Let $M\vDash T$ be extremally $\aleph_0$-saturated.
Then a nonempty $D\subseteq M^n$ is definable if and only if there is a definable
$P:M^n\rightarrow\Rn^+$ such that $D=Z(P)$.
\end{theorem}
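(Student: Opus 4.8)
The forward direction is immediate: if $D$ is definable then $P(\x)=d(\x,D)$ is a definable predicate with values in $\Rn^+$, and since $D$ is closed we have $D=Z(P)$. The substance is the converse, so suppose $D=Z(P)$ for a definable $P:M^n\rightarrow\Rn^+$ with $D\neq\emptyset$. The plan is to reduce to the criterion of Corollary \ref{satzeroset}, whose defining inequality I will verify by passing to a compact model and then transferring back to $M$.

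First I would fix a compact model $N_0\vDash T$, which exists by the standing assumption of the section. Since $T$ is complete, the value of the sentence $\inf_\x P(\x)$ is the same in $M$ and $N_0$, and it equals $0$ because $D\neq\emptyset$. As $N_0$ is compact and $P^{N_0}$ continuous, the zeroset $D_0=Z(P^{N_0})$ is a nonempty compact subset of $N_0^n$. I would now obtain the key inequality in $N_0$ by a purely topological argument: fix $\epsilon>0$ and let $K_\epsilon=\{\x\in N_0^n:\ d(\x,D_0)\geqslant\epsilon\}$. This set is compact and disjoint from $D_0$, so $P^{N_0}$ attains a positive minimum $m>0$ on it; since the metric is bounded by $1$, putting $\lambda=1/m$ gives $d(\x,D_0)\leqslant\lambda P^{N_0}(\x)+\epsilon$ for every $\x\in N_0^n$. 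Because a compact model is extremally $\aleph_0$-saturated, Corollary \ref{satzeroset} now applies in $N_0$ and shows that $D_0$ is definable there; fix formulas $\phi_k$ with $\phi_k^{N_0}\stackrel{u}{\rightarrow}d(\x,D_0)$.

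The second part of the plan is to transfer definability from $N_0$ back to $M$. Let $R$ be the definable predicate determined by the sequence $(\phi_k)$, so that $R^{N_0}=d(\x,D_0)$ and, by Proposition \ref{definitional expansion}, $R$ is interpreted coherently in every model of $T$; in particular $(M,P,R)$ and $(N_0,P,R)$ are AL-elementarily equivalent, since the value of each sentence in the expanded language is a uniform limit of values of $L$-sentences and these agree across models of the complete theory $T$. In $N_0$ the predicate $R^{N_0}=d(\x,D_0)$ satisfies conditions (i)--(iii) of Remark \ref{3conditions}, and $Z(R^{N_0})=Z(P^{N_0})$; each of these assertions is expressible through families of conditions on sentences (using Lemma \ref{zerosets} for the equality of zerosets), hence they also hold in $(M,P,R)$.

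Finally, $R^M$ is a definable predicate on $M$ satisfying (i)--(iii), so Proposition \ref{dfnconditions} yields $R^M(\x)=d(\x,Z(R^M))$ with $Z(R^M)\neq\emptyset$; and since $Z(R^M)=Z(P^M)=D$, we conclude that $d(\x,D)=R^M(\x)$ is definable, i.e.\ $D$ is definable. I expect the main obstacle to be this transfer step: one must phrase the assertion that $R$ is the distance predicate to the zeroset of $P$ entirely through sentences, so that AL-elementary equivalence of the expansions $(M,P,R)$ and $(N_0,P,R)$ can be invoked. This is precisely where completeness of $T$ and the behaviour of definable predicates under elementary equivalence (Proposition \ref{definitional expansion}) are needed, and care is required to ensure that conditions (i)--(iii) together with the two-sided zeroset inclusions of Lemma \ref{zerosets} are genuinely first-order in the augmented signature.
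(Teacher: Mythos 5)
Your proof is correct, and the first half (the compactness argument producing $d(\x,D_0)\leqslant\lambda P(\x)+\epsilon$ in a compact model, followed by Corollary \ref{satzeroset}) is essentially identical in substance to the paper's opening step, which runs the same argument via the finite intersection property of the sets $X_\lambda$. Where you genuinely diverge is in the transfer back to $M$. The paper proceeds in two further stages: it first assumes $M$ is fully $\aleph_0$-saturated, elementarily embeds a compact model $K$ into $M$ by a back-and-forth argument, and transfers along the elementary embedding $(K,P^K,d(\x,D_0))\preccurlyeq(M,P,d(\x,D_1))$ using Propositions \ref{dfnconditions} and \ref{definitional expansion} together with Lemma \ref{zerosets}; it then handles the extremally $\aleph_0$-saturated case by passing up to an $\aleph_0$-saturated $N\succcurlyeq M$ and cutting back down with Proposition \ref{dfnrestriction}. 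You instead observe that everything you need to move from $N_0$ to $M$ --- conditions (i)--(iii) of Remark \ref{3conditions} for $R$ and the two-sided inequalities of Lemma \ref{zerosets} witnessing $Z(R)=Z(P)$ --- is a family of closed conditions whose values are determined by the complete theory $T$ (here the model-independence of $\|\phi\|$, which makes the approximating sequence Cauchy in $\mathbb D_n$ and hence makes $R$ well-defined in every model, is the one point you should make explicit), so mere elementary equivalence of the definitional expansions suffices and Proposition \ref{dfnconditions} can be applied directly in $M$. This buys you a shorter argument that needs neither the unproved back-and-forth embedding of a compact model into $M$ nor the detour through a saturated extension and Proposition \ref{dfnrestriction}; what the paper's route buys in exchange is that it stays within its standard toolkit of elementary embeddings and restriction of definable sets, and the intermediate statement that $d(\x,D_0)$ extends to the distance predicate of a definable $D_1\subseteq M^n$ is reused elsewhere.
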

\begin{proof} We prove the non-trivial part.
Let $P$, $D$ be as above and consider the case $n=1$.
First assume $M$ is compact and $\epsilon>0$ is fixed.
Then there must exist $\lambda\geqslant0$ such that
$d(x,D)\leqslant\lambda P(x)+\epsilon$ for all $x\in M$.
Otherwise, for each $\lambda\geqslant0$, the set
$$X_\lambda=\{x\ : \ d(x,D)\geqslant\lambda P(x)+\epsilon\}$$
is nonempty closed. Since $M$ is compact,
there exists $b\in\cap_\lambda X_\lambda$. Clearly then $P(b)=0$ and hence
$b\in D$ and $d(b,D)\geqslant\epsilon$. This is a contradiction.
Therefore, by Corollary \ref{satzeroset}, $D$ is definable.

Now, assume $M$ is $\aleph_0$-saturated.
An easy back and forth argument shows that every compact model of $T$ can be elementarily embedded in $M$.
Let $K\preccurlyeq M$ where $K$ is compact. We then have that $(K,P^K)\preccurlyeq(M,P)$.
Moreover, $D_0=Z(P^K)$ is nonempty definable in $K$.
By Propositions \ref{dfnconditions} and \ref{definitional expansion},
for some definable $D_1\subseteq M$ we have that
$$(K,P^K,d(x,D_0))\preccurlyeq(M,P,d(x,D_1)).$$
Since $P^K$ and $d(x,D_0)$ have the same zeroset, by Lemma \ref{zerosets},
$P$ and $d(x,D_1)$ must have the same zeroset. We conclude that $D=D_1$.

Finally, assume $M$ is just extremally $\aleph_0$-saturated. Let $M\preccurlyeq N$ where $N$
is $\aleph_0$-saturated. Then $Z(P^N)$ is definable. Hence $Z(P)=Z(P^N)\cap M^n$
is definable by Proposition \ref{dfnrestriction}.
\end{proof}

It is proved in \cite{extreme} that a theory having a compact model has a unique compact extremal model.
Such a model is elementarily embedded in every extremally $\aleph_0$-saturated model of the theory.
Proof of the above theorem could be then shortened a bit by using this fact.
A consequence of Theorem \ref{minimal sets} is that in an extremally $\aleph_0$-saturated model $M$
if $f:M^n\rightarrow M^m$ and $D\subseteq M^m$ are definable then $f^{-1}(D)$ definable.
It is the zeroset of $d(f(\x),D)$.
Also, writing $P(x)=\sum_k 2^{-k}d(x,D_k)$ one checks that countable intersections of definable sets are definable.
Definable sets are not closed under finite unions. We have however the following.

\begin{proposition} \label{union}
Let $D_1\subseteq D_2\subseteq\ldots$ be a chain of definable sets in $M$.
If $M$ contains a compact elementary submodel then $D=\overline{\cup_n D_n}$ is definable.
\end{proposition}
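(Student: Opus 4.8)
The plan is to show that the increasing union $D=\overline{\cup_n D_n}$ of definable sets is definable by exhibiting a nonnegative definable predicate $P$ whose zeroset is exactly $D$, and then invoking Theorem \ref{minimal sets}. The natural candidate is built from the distance functions $d(\x,D_n)$. Since $D_1\subseteq D_2\subseteq\cdots$, the functions $d(\x,D_n)$ are nonincreasing in $n$, so they converge pointwise to $\inf_n d(\x,D_n)=d(\x,\cup_n D_n)=d(\x,D)$. The difficulty is that this limit need not be uniform, so $d(\x,D)$ is not obviously definable directly; I cannot simply take $P(\x)=d(\x,D)$ and declare it definable. Instead I would aim to produce some definable $P$ with $Z(P)=D$ and then let Theorem \ref{minimal sets} do the work of promoting the zeroset to a genuine definable set.

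\textbf{Using the compact submodel.} The hypothesis that $M$ contains a compact elementary submodel $K\preccurlyeq M$ is the crux. First I would reduce to $K$: by Corollary \ref{restriction} and Proposition \ref{dfnrestriction}, each $D_n\cap K$ is definable in $K$, with $d(\x,D_n)|_K=d(\x,D_n\cap K)$, and $K$ being compact (hence extremally $\aleph_0$-saturated) is where I can extract uniform control. On the compact space $K$, the nonincreasing sequence $d(\x,D_n\cap K)$ converges pointwise to $d(\x,\cup_n(D_n\cap K))$; by a Dini-type argument the convergence should be uniform provided the limit is continuous, which it is because it equals $d(\x,\overline{\cup_n(D_n\cap K)})$. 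This would make $d(\x,\overline{\cup_n(D_n\cap K)})$ a uniform limit of formulas on $K$, hence definable in $K$, and its zeroset $\overline{\cup_n(D_n\cap K)}$ definable in $K$ by Theorem \ref{minimal sets}.

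\textbf{Transferring back to $M$.} Having a definable set $D_0=\overline{\cup_n(D_n\cap K)}\subseteq K$, I would promote it to $M$ using the extremally-saturated transfer machinery: by Propositions \ref{dfnconditions} and \ref{definitional expansion} (exactly as in the proof of Theorem \ref{minimal sets}), $d(\x,D_0)$ extends to a definable predicate $Q$ on $M$ satisfying conditions (i)--(iii) of Remark \ref{3conditions}, whose zeroset $D'$ is definable in $M$ with $d(\x,D')|_K=d(\x,D_0)$. The remaining task is to verify $D'=D$, i.e.\ that the zeroset of the $M$-extension really is $\overline{\cup_n D_n}$. Here I would argue that $Q$ and $\inf_n d(\x,D_n)$ have the same zeroset on $M$ and apply Lemma \ref{zerosets} to conclude they define the same set, so $Z(Q)=D$.

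\textbf{The main obstacle.} The hardest point is ensuring the uniformity of convergence on the compact submodel and then correctly matching zerosets after transfer. The Dini-style uniformity needs the limit function to be continuous on $K$, which holds since a distance function to any set is $1$-Lipschitz; but I must be careful that the relevant sup-norm convergence passes through the elementary embedding $K\preccurlyeq M$ so that the resulting predicate is genuinely definable on all of $M$ and not merely on $K$. Once uniformity on $K$ is secured, the transfer steps are routine applications of the results already established in the earlier sections.
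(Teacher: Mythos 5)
Your first half is sound and matches the paper: restricting to the compact elementary submodel $K$, the monotone sequence $d(\x,D_n)|_K$ converges pointwise to a continuous ($1$-Lipschitz) limit, so Dini gives uniform convergence on $K$. The gap is in your transfer back to $M$. Proposition \ref{dfnconditions}, the promotion of a definable set from a submodel to a larger model, and Lemma \ref{zerosets} all require the \emph{ambient} model to be extremally $\aleph_0$-saturated, but the hypothesis of the proposition is only that $M$ \emph{contains} a compact elementary submodel; $M$ itself carries no saturation assumption. So you cannot conclude from conditions (i)--(iii) that the extension $Q$ equals $d(\x,Z(Q))$ in $M$, nor invoke Lemma \ref{zerosets} there. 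The final step is also confused on its own terms: Lemma \ref{zerosets} compares zerosets of two predicates already known to be definable, whereas $\inf_n d(\x,D_n)$ is not yet known to be definable (that is precisely what is being proved), and in any case equality of zerosets would not give $Q=d(\x,D)$.

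The correct mechanism, which you gesture at in your ``main obstacle'' paragraph but do not carry out, avoids all of this machinery. For $n\leqslant m$ the difference $d(\x,D_n)-d(\x,D_m)$ is a nonnegative definable predicate, and by Proposition \ref{definitional expansion} its supremum over $K$ equals its supremum over $M$. Hence the uniform Cauchy estimate obtained from Dini on $K$ transfers verbatim to $M$: the sequence $d(\x,D_n)$ is uniformly Cauchy on $M$ and therefore converges uniformly to its pointwise limit, which is $\inf_n d(\x,D_n)=d(\x,\cup_n D_n)=d(\x,D)$. Thus $d(\x,D)$ is itself a uniform limit of definable predicates and $D$ is definable by definition --- no appeal to Theorem \ref{minimal sets}, to zeroset comparisons, or to any saturation of $M$ is needed.
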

\begin{proof} It is clear that $d(\x,D_n)$ converges to $d(\x,D)$ pointwise.
Assume $K\preccurlyeq M$ where $K$ is compact. Let $P_n(\x)=d(\x,D_n)|_K$ and $P(\x)=d(\x,D)|_K$.
Then $P_n$ is monotone and converges to $P$ pointwise.
Since $P$ is continuous, the convergence is uniform (by Dini's theorem). For each $\epsilon>0$ take $\ell$
such that
$$|P_m(\x)-P_n(\x)|\leqslant\epsilon\ \ \ \ \ \ \forall m,n\geqslant \ell,\ \ \forall \x\in K.$$
Then we must similarly have that
$$|d(\x,D_m)-d(\x,D_n)|\leqslant\epsilon\ \ \ \ \ \ \forall m,n\geqslant \ell,\ \ \forall \x\in M.$$
This shows that the convergence of $d(\x,D_n)$ to $d(\x,D)$ is uniform.
\end{proof}

As in the case of formulas, $\hat Q(p)=p(Q)$ is an affine logic-continuous function on $S_n(T)$.
A partial type $\Sigma(\x)$ is exposed if the set $$[\Sigma]=\{p\in S_n(T):\ \Sigma\subseteq p\}$$
is a face exposed by $\hat Q$ for some definable predicate $Q$.

\begin{proposition} \label{exposed types}
Let $M$ be $\aleph_0$-saturated and $\Sigma(\x)$ be a partial type. Then the following are equivalent:

(i) $\Sigma(M)=\{\a: \ \a\vDash\Sigma(\x)\}$ is definable

(ii) $[\Sigma]$ is either $S_n(T)$ or an exposed face

(iii) There exists a definable predicate $Q(\x)$ such that
$$T\vDash0\leqslant Q(\x) \ \ \ \ \ \ \& \ \ \ \ \
\Sigma\equiv\{Q(\x)=0\}.$$
\end{proposition}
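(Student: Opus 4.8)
The plan is to prove the three-way equivalence by establishing $(i)\Rightarrow(iii)$, $(iii)\Rightarrow(ii)$, and $(ii)\Rightarrow(i)$, exploiting the compact-model hypothesis and the duality between definable predicates and affine logic-continuous functions on $S_n(T)$.

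For $(i)\Rightarrow(iii)$ I would simply take $Q(\x)=d(\x,\Sigma(M))$. Since $\Sigma(M)$ is definable, $Q$ is a definable predicate with $Q\geqslant0$, and its zeroset is exactly $\Sigma(M)$, so $\Sigma\equiv\{Q(\x)=0\}$ holds by the correspondence between $\Sigma(M)$ and the partial type $\Sigma(\x)$ in the $\aleph_0$-saturated $M$. For $(iii)\Rightarrow(ii)$, given $Q\geqslant0$ with $\Sigma\equiv\{Q(\x)=0\}$, I pass to the associated affine logic-continuous function $\hat Q$ on $S_n(T)$. Since $T\vDash 0\leqslant Q$, we have $\hat Q\geqslant0$ everywhere, so $\inf_{p}\hat Q(p)=0$ provided $\Sigma$ is satisfiable; the set $[\Sigma]=\{p:\Sigma\subseteq p\}$ is precisely $\hat Q^{-1}(0)=\hat Q^{-1}(\inf\hat Q)$. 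If $\hat Q$ is non-constant this is an exposed face by the theorem of Simon (\cite{Simon} Th. 8.3) applied to $-\hat Q$ (the minimum of $\hat Q$ is the maximum of $-\hat Q$); if $\hat Q$ is constant then $\hat Q\equiv0$ and $[\Sigma]=S_n(T)$.

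The main work is $(ii)\Rightarrow(i)$, and this is where the compact-model hypothesis enters decisively. If $[\Sigma]=S_n(T)$, then $\Sigma(M)=M^n$, which is definable (distance zero), so assume $[\Sigma]$ is an exposed face, exposed by $\hat Q$ for some definable $Q$. I would normalize so that $Q\geqslant0$ with $[\Sigma]=\{p:\hat Q(p)=0\}=Z(\hat Q)$, by replacing $Q$ with $Q-\inf_{p}\hat Q(p)$ (a definable predicate, since constants are definable and $\inf\hat Q$ is attained on the compact $S_n(T)$). Then $\Sigma(M)=Z(Q)$ as a subset of $M^n$: indeed $\a\vDash\Sigma$ iff $tp(\a)\in[\Sigma]$ iff $\hat Q(tp(\a))=Q(\a)=0$. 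Now I invoke Theorem \ref{minimal sets}: $M$ is extremally $\aleph_0$-saturated (being $\aleph_0$-saturated), $Q:M^n\rightarrow\Rn^+$ is definable, and $D=Z(Q)=\Sigma(M)$ is nonempty, so $D$ is definable. The subtle point to verify carefully is that $\Sigma(M)$ is genuinely nonempty and equals $Z(Q)$: nonemptiness follows because an exposed face contains an extreme type, which by $\aleph_0$-saturation is realized in $M$.

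The hard part will be confirming that the exposing functional in $(ii)$ may be taken of the form $\hat Q$ for a single definable predicate $Q$, rather than merely some element of $\mathbf{A}(S_n(T))$; but the definition of \emph{exposed} partial type given just before the proposition already stipulates that $[\Sigma]$ is exposed by $\hat Q$ for a definable $Q$, so this is built into hypothesis $(ii)$ and causes no difficulty. The real content is thus the reduction to Theorem \ref{minimal sets}, which is exactly the place where having a compact elementary submodel (guaranteed in this section's standing assumption that $T$ has a compact model) is used to convert the zeroset $Z(Q)$ into a genuinely definable set. I would close by noting that the chain $(i)\Rightarrow(iii)\Rightarrow(ii)\Rightarrow(i)$ completes the equivalence.
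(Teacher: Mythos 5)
Your proof is correct and uses exactly the paper's ingredients --- the witness $d(\x,\Sigma(M))$ for the definable-to-zeroset direction, the passage to the affine continuous function $\hat Q$ on $S_n(T)$ for the exposed-face characterization, and the appeal to Theorem \ref{minimal sets} to recover definability from the zeroset form --- merely arranged as the cycle (i)$\Rightarrow$(iii)$\Rightarrow$(ii)$\Rightarrow$(i) rather than the paper's (i)$\Rightarrow$(ii)$\Rightarrow$(iii)$\Rightarrow$(i). The one step you gesture at without spelling out, and which the paper does make explicit, is that $p(Q)=0$ forces $\Sigma\subseteq p$: realize $p$ by some $\a$ in the $\aleph_0$-saturated $M$, note $Q(\a)=p(Q)=0$ places $\a$ in the closed set $\Sigma(M)$, hence $\Sigma\subseteq tp(\a)=p$.
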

\begin{proof} (i)$\Rightarrow$(ii) Let $Q(\x)=d(\x,\Sigma(M))$.
We show that for each $p$,\ \ $\Sigma\subseteq p$ if and only if $p(Q(\x))=0$.
Fix $\a\vDash p$. If $\Sigma\subseteq p$ then $p(Q)=Q(\a)=0$.
Conversely, if $p(Q)=0$, we have that $Q(\a)=p(Q)=0$ and hence $\a\vDash\Sigma$.
This implies (by saturation of $M$) that $\Sigma\subseteq p$.
Now, we have that $[\Sigma]=\{p\in S_n(T):\ \hat Q(p)=0\}$.

(ii)$\Rightarrow$(iii): If $[\Sigma]=S_n(T)$, the required conditions hold with $Q=0$.
Otherwise, there exists $Q(\x)\in\mathbf D_n$ such that $\hat Q$ is nonnegative nonconstant on $S_n(T)$
and $$[\Sigma]=\{p\in S_n(T)\ :\ \hat{Q}(p)=0\}.$$
In this case, the required conditions hold with $Q(\x)$.

(iii)$\Rightarrow$(i): The assumption implies that $\Sigma(M)=Z(Q)$. By Theorem \ref{minimal sets}, this is a definable set.
\end{proof}

As a consequence, complete principal types are exactly the exposed ones.
Also, regarding Theorem \ref{minimal sets}, there is an order preserving correspondence
between nonempty definable sets in $M^n$ and exposed faces of $S_n(T)$
so that bigger sets correspond to bigger exposed faces.

%If $\mathbb D_n$ is finite dimensional, by the Straszewicz theorem (\cite{Aliprantis-Inf} Th 7.89),
%exposed types are dense in the set of extreme types. So, principal types are dense in the extreme ones.

\section{Definability in measure algebras}
In this section we characterize one-dimensional definable sets in the theory of probability algebras.
Before this, we give some examples of definable sets in compact structures.
In a metric group, closure of the torsion subgroup is definable.
In a dynamical system $(M,f)$, the closure of the set of periodic points is definable.
In the closed unit disc with the Euclidean metric, the boundary as well as the center is definable.
Also, the line segment between two point is definable with parameters.
Some interesting points in acute triangles are definable.
For example, the circumcenter and the centroid are definable.
\bigskip

\noindent{\bf Probability algebras}: Let $L=\{\wedge,\vee,\ ' ,0,1,\mu\}$.
The theory of probability algebras PrA is axiomatized as follows:

- Axioms of Boolean algebras

- $\mu(0)=0$\  and \ $\mu(1)=1$

- $\mu(x)\leqslant\mu(x\vee y)$

- $\mu(x\wedge y)+\mu(x\vee y)=\mu(x)+\mu(y)$

- $d(x,y)=\mu(x\triangle y)$.
\bigskip

Since we assume models are metrically complete, $\mu$ is in fact sigma-additive
and the ambient Boolean algebras are Dedekind complete.
PrA is a complete $\aleph_0$-stable theory with quantifier-elimination (see \cite{bagheri3}).
For each $A\subseteq M\vDash$ PrA, let $\bar A$ be the topological closure of
the probability algebra generated by $A$. Then, $\bar A$ is a model of PrA.
We conclude by Proposition \ref{well-dcl} that $\textrm{dcl}(A)=\bar A$.
There is also an easy description of parametrically definable subsets of $M$.
We recall some definitions from \cite{Fremlin}. A bounded functions
$f:M\rightarrow\Rn$ is additive if $f(x\vee y)=f(x)+f(y)$
whenever $x\wedge y=0$. Countable additivity is defined similarly.
$f$ is said to be positive on $a$ if $0\leqslant f(t)$ for each $t\leqslant a$.
It is negative on $a$ if $-f$ is positive on $a$. For each $a$, the function
$\mu(x\wedge a)$ is countably additive. Additive functions form a vector space.
By inclusion-exclusion principle, the formula $\mu(t(\x))$ is equivalent to a finite sum of
formulas of the form $\mu(z_1\wedge\ldots\wedge z_n)$ where $z_i$ is either $x_i$ or $x_i'$.
Therefore, for each quantifier-free formula $\phi(x,\a)$, the function $\phi^M(x)-\phi^M(0)$ is countably additive.
For $a,b\in M$ let $[a,b]=\{x\in M: a\leqslant x\leqslant b\}$.

\begin{proposition} \label{PrA-def}
A closed $D\subseteq M\vDash$ PrA is definable with parameters if and only if $D=[a,b]$ for some $a,b$.
\end{proposition}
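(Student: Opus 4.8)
The plan is to treat the two directions separately, the forward one (producing the distance predicate) being a short computation and the converse resting on the additive structure of definable predicates on a probability algebra.

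For the easy direction, suppose $D=[a,b]$ with $a\le b$. I would first check by a pointwise (measure-theoretic) computation that the nearest point of $[a,b]$ to a given $x$ is $a\vee(x\wedge b)$, so that
$$d(x,[a,b])=\mu(a\wedge x')+\mu(x\wedge b').$$
The lower bound is immediate: for $y\in[a,b]$ the relations $a\le y$ and $y\le b$ force $a\wedge x'\le y\wedge x'$ and $x\wedge b'\le x\wedge y'$, and the value is attained at $y=a\vee(x\wedge b)$. The right-hand side is a quantifier-free affine formula in the parameters $a,b$, so $[a,b]$ is definable. (If $a\not\le b$ then $[a,b]=\emptyset$, the degenerate case.)

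For the converse, let $D\neq\emptyset$ be definable and put $P(x)=d(x,D)$, a $1$-Lipschitz definable predicate with $Z(P)=D$. The first step is to show $f(x)=P(x)-P(0)$ is additive. By quantifier elimination every affine formula over the parameters is equivalent to an affine combination of atomic formulas $\mu(t(x))$, and for each quantifier-free $\phi$ the function $\phi^M(x)-\phi^M(0)$ is countably additive; since additive functions form a vector space and additivity survives uniform limits, $f$ is additive with $f(0)=0$, and being $1$-Lipschitz it is absolutely continuous with respect to $\mu$. The second step is the representation: by the Radon--Nikodym theorem (see \cite{Fremlin}) there is $g\in L^1(M,\mu)$ with
$$P(x)=P(0)+\int_x g\,d\mu,$$
and the Lipschitz bound, which gives $|\int_e g\,d\mu|=|f(e)|\le\mu(e)$ for all $e$, forces $\|g\|_\infty\le1$.

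The third step extracts $a$ and $b$ from the sign of $g$. Minimizing $\int_x g\,d\mu$ by taking $x=\{g<0\}$ and using $\inf_x P(x)=0$ (as $D\neq\emptyset$) yields $P(0)=\int g^-\,d\mu$, whence
$$P(x)=\int_{x\wedge\{g>0\}}g\,d\mu+\int_{x'\wedge\{g<0\}}|g|\,d\mu.$$
Both summands are nonnegative, so $P(x)=0$ iff $\mu(x\wedge\{g>0\})=0$ and $\mu(x'\wedge\{g<0\})=0$, i.e.\ iff $x\le b$ and $a\le x$, where $a$ and $b$ are the elements of $M$ represented by $\{g<0\}$ and $\{g\le0\}$. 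Since $a\le b$, this says exactly $D=Z(P)=[a,b]$. I expect the main obstacle to be this representation step: justifying rigorously that additivity and absolute continuity pass through the uniform limit defining $P$, and invoking the measure-algebra form of Radon--Nikodym to obtain a genuine bounded density whose sign sets are honest elements of $M$. Once $P$ is written as $P(0)+\int_x g\,d\mu$ with $\|g\|_\infty\le1$, the remaining analysis of $Z(P)$ is routine.
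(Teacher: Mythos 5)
Your proof is correct, and its skeleton matches the paper's: the forward direction is identical (the formula $d(x,[a,b])=\mu(a\wedge x')+\mu(x\wedge b')$ with nearest point $a\vee(x\wedge b)$), and the converse in both cases rests on using quantifier elimination and uniform limits to show that $f(x)=P(x)-P(0)$ is a countably additive functional. Where you diverge is in how the endpoints are extracted. The paper applies the Hahn decomposition theorem directly to $f$, takes maximal elements $a,b$ on which $f$ is positive resp.\ negative, and uses the identity $f(t)=f(t\wedge a')+f(t\wedge b')$ to identify the extremum-set as $[b',a]$; you instead invoke the Radon--Nikodym theorem for measure algebras (legitimate: $|f(e)|\leqslant\mu(e)$ gives true continuity, every model of PrA is a genuine probability algebra, and the sign sets of a density are honest elements of $M$) and read off the endpoints as $a=\{g<0\}$, $b=\{g\leqslant 0\}$. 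The two tools are essentially siblings, but your route buys an explicit integral representation of $P$ as a sum of two nonnegative terms from which $Z(P)=[a,b]$ drops out by inspection, and---more substantively---it works directly in an arbitrary model of PrA, so you dispense with the paper's second paragraph (passing to an $\aleph_0$-saturated elementary extension via Proposition \ref{dfnconditions} and pulling the interval back down by Dedekind completeness). The only points to tighten are the ones you flag yourself: countable (not merely finite) additivity of $f$, which follows from finite additivity together with the domination $|f(e)|\leqslant\mu(e)$, and the citation of the measure-algebra form of Radon--Nikodym (Fremlin 327D) rather than the classical one for measure spaces.
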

\begin{proof} If $a\leqslant b$, we have that $d(x,[a,b])=\mu(x\wedge b')+\mu(a\wedge x')$
(the minimum distance is obtained at $a\vee(b\wedge  x)$).
So, $[a,b]$ is definable for every $a,b$. Conversely assume $D\subseteq M$
is nonempty and definable.
First assume $M$ is $\aleph_0$-saturated. Suppose that $D$ is the maximum-set
(points at which $P$ takes its maximum) of a definable function $P:M\rightarrow\Rn$.
Let $\phi_k^M\rightarrow P$ uniformly.
Then $\phi_k^M(x)-\phi_k^M(0)$ tends to $f(x)=P(x)-P(0)$ and hence $f(x)$ is finitely additive.
In fact, since $f$ is continuous, it is countably additive (see \cite{Fremlin} 327B).
$D$ is the maximum-set of $f$ too. We must determine $D$.
By the Hahn decomposition theorem (\cite{Fremlin} 326I), there exists $a$
such that $f$ is positive on $a$ and negative on $a'$.
By completeness of $M$, we may further assume that $a$ is maximal with this property.
Also, there is a maximal $b$ such that $f$ is negative on $b$ and positive on $b'$.
So, $b'\leqslant a$ and $f(t)=0$ for every $t\leqslant a\wedge b$.
Moreover, by maximality of $a$ and $b$,\ \ $f(t\wedge a')<0$ whenever $t\wedge a'>0$ and
$f(t\wedge b')>0$ whenever $t\wedge b'>0$. Now, by additivity of $f$, for each $t$ we have that
$$f(t)=f(t\wedge a')+f(t\wedge a\wedge b)+f(t\wedge b')=f(t\wedge a')+f(t\wedge b').$$
We conclude that, $f$ takes its maximum value at $t$ if and only if
$b'\leqslant t\leqslant a$.

Now, assume $M$ is arbitrary. Let $M\preccurlyeq N$ be $\aleph_0$-saturated
and $Q(x)$ be the definable extension of $d(x,D)$ to $N$.
By Proposition \ref{dfnconditions}, $Q(x)=d(x,\bar D)$ where $\bar D=Z(Q)$.
Let $\bar D=[a,b]$ where $a,b\in N$. We show that $M\cap[a,b]$ is an interval in $M$. Since $M$ is Dedekind complete,
$a_1=\inf\{t\in M:\ a\leqslant t\}$ and $b_1=\sup\{t\in M:\ t\leqslant b\}$ belong to $M$.
Clearly, then $D=M\cap[a,b]=[a_1,b_1]$.
\end{proof}

Since $\{0,1\}$ is a first order model of PrA, by Theorem \ref{Bauer}, $S_n(PrA)$ is a simplex.
In fact, it is the standard $(2^n-1)$-simplex. We finish the paper by asking two question.
\bigskip

\noindent{\bf Question}: Is it true that if $T$ has a compact model, it has a compact prime model?
Is it true that if $T$ has a first order model and $T_e$ has a prime model, then its prime model is a prime model for $T$?


\begin{thebibliography}{5}
\bibitem{Aliprantis-Inf} C.D. Aliprantis, K.C. Border, \emph{Infinite dimensional analysis}, Springer (2006).
\bibitem{Alfsen} E.M. Alfsen, \emph{Compact convex sets and boundary integrals}, Springer-Verlag (1971).
\bibitem{bagheri3} S.M. Bagheri, \emph{Linear model theory for Lipschitz structures}, Arch. Math. Logic, 53: 897-927 (2014).
\bibitem{Isomorphism} S.M. Bagheri, \emph{The isomorphism theorem for linear fragments of continuous logic},
Math. Log. Quart. Volume 67, Issue 2, 193-205 (2021).
\bibitem{extreme} S.M. Bagheri, \emph{Extreme types and extremal models}, to appear in Annals of Pure and Apllied Logic.
\bibitem{BBHU} I. Ben-Yaacov, A. Berenstein, C.W. Henson, A. Usvyatsov,
\emph{Model theory for metric structures}, Model theory with Applications to Algebra and Analysis,
volume 2 (Zo\'{e} Chatzidakis, Dugald Macpherson, Anand Pillay, and Alex Wilkie, eds.),
London Math Society Lecture Note Series, vol. 350, Cambridge University Press, pp. 315-427 (2008).
\bibitem{CK1} C.C. Chang and H.J. Keisler, \textit{Model theory }, North-Holland (1990).
\bibitem{Conway} J.B. Conway, \emph{A course in functional analysis} (second edition), Springer (1990).
\bibitem{Fremlin} D.H. Fremlin, \emph{Measure theory, volume 3: Measure algebras}, Internet file (2004).
\bibitem{Rao} K.P.S. Bhaskara Rao, M. Bhaskara Rao, \textit{Theory of charges}, Academic Press (1983).
\bibitem{Simon} B. Simon, \emph{Convexity: An analytic viewpoint}, Cambridge University Press (2011).
\end{thebibliography}
\end{document}